% !TEX encoding = UTF-8 Unicode
% !TEX TS-program = pdflatex
\documentclass[12pt]{amsart}
\usepackage{amssymb}
\usepackage[utf8]{inputenc}
\usepackage[T1]{fontenc}
\usepackage{hyperref}
\usepackage[mathscr]{eucal}

\newtheorem{lemma}{Lemma}[section]
\newtheorem{proposition}[lemma]{Proposition}
\newtheorem*{proposition*}{Proposition}
\newtheorem{theorem}[lemma]{Theorem}
\newtheorem*{theorem*}{Theorem}
\newtheorem*{conjecture*}{Conjecture}

\newtheorem{corollary}[lemma]{Corollary}
\newtheorem{question}{Question}

\theoremstyle{remark}
\newtheorem*{remark}{Remark}

\theoremstyle{definition}
\newtheorem{definition}[lemma]{Definition}
\newtheorem*{convention}{Convention}
\newtheorem*{notation}{Notation}

\newcommand{\norm}[1]{\lVert #1 \rVert}
\newcommand{\T}{{\mathbb T}}
\newcommand{\R}{{\mathbb R}}
\newcommand{\Z}{{\mathbb Z}}
\newcommand{\N}{{\mathbb N}}

\newcommand{\E}{{\mathbb E}}

\newcommand{\CC}{{\mathcal C}}

\newcommand{\CL}{{\mathcal L}}

\newcommand{\CS}{{\mathcal S}}
\newcommand{\CB}{{\mathcal B}}
\newcommand{\CG}{{\mathcal G}}
\newcommand{\CH}{{\mathcal H}}
\newcommand{\CK}{{\mathcal K}}

\newcommand{\CU}{{\mathcal U}}

\newcommand{\CW}{{\mathcal W}}
\newcommand{\CZ}{{\mathcal Z}}

\newcommand{\one}{{\boldsymbol 1}}

\DeclareMathOperator{\id}{id}

\DeclareMathOperator{\range}{range}

\newcommand{\ua}{\underline{a}}
\newcommand{\ub}{\underline{b}}
\newcommand{\wh}{\widehat}

\newcommand{\MS}{\mathscr{S}}
\newcommand{\MC}{\mathscr{C}}
\newcommand{\MN}{\mathscr{N}}

\DeclareMathOperator{\ad}{Ad}

\begin{document}
\title[Complexity of nilsystems]
{Complexity of Nilsystems and systems lacking nilfactors}
\author{Bernard Host}
\address{BH: Laboratoire d'analyse et de math\'ematiques appliqu\'{e}es, 
Universit\'e  Paris-Est Marne la Vall\'ee \& CNRS UMR 8050\\
5 Bd. Descartes, Champs sur Marne\\
77454 Marne la Vall\'ee Cedex 2, France}
\email{bernard.host@univ-mlv.fr}

\author{Bryna Kra}
\address{BK: Department of Mathematics, Northwestern University \\ 2033 Sheridan Road Evanston \\ IL 60208-2730, USA} 
\email{kra@math.northwestern.edu}

\author{Alejandro Maass}
\address{AM: Departamento de Ingenier\'{\i}a
Matem\'atica, Universidad de Chile
\& Centro de Modelamiento Ma\-te\-m\'a\-ti\-co 
UMI 2071 UCHILE-CNRS \\ Casilla 170/3 correo 3 \\
Santiago, Chili.}
\email{amaass@dim.uchile.cl}

\thanks{The second author was partially supported by NSF grant 
$1200971$ and the third author by Fondap $15090007$ and CMM-Basal grants.  
The authors thank PICS-CNRS for support of the first Franco-Chilean Conference on 
Dynamics and Combinatorics, where this work was begun.}

\keywords{Nilsystems, complexity, topological dynamics}

\subjclass[2010]{54H20, 37A05, 37B40}

\begin{abstract}
Nilsystems are a natural generalization of rotations and 
arise in various contexts, including 
in the study of multiple ergodic averages in ergodic theory, in the 
structural analysis of topological dynamical systems, and in 
asymptotics for patterns in certain subsets of the integers.  
We show, however, that many natural classes in 
both measure preserving systems and topological dynamical systems
contain no higher order nilsystems as factors, meaning that 
the only nilsystems they contain as factors are rotations.  
In the ergodic setting, we show that there are spectral obstructions 
that give rise to this behavior.  In the topological setting, 
nilsystems have a particular type of complexity of polynomial 
growth, where the polynomial (with explicit degree) is an asymptotic both from below and above.
We also deduce several ergodic and topological applications of these results.
\end{abstract}
\maketitle

%\tableofcontents

\section{The role of nilsystems}

\subsection{Nilsystems in ergodic theory}
\label{subsec:nilergo}
In studying multiple ergodic averages in a measure preserving 
system $(X, \CB, \mu, T)$, certain factors with algebraic structure  
occur naturally.
(By a measure preserving system, we mean a Lebesgue probability space 
$(X, \CB, \mu, T)$ endowed with a measurable, measure preserving transformation 
$T\colon X\to X$.)
The simplest case is the invariant $\sigma$-algebra that arises 
in the von Neumann mean ergodic theorem.  
Assuming henceforth that the system is ergodic, the relevant factor for the study of  the average of   $f(T^nx)f(T^{2n}x)$ is the \emph{Kronecker factor} $Z_1(X) = Z_1(X, \CB, \mu, T)$. This is the factor spanned by the eigenvalues of the system, and it is isomorphic to a translation on a compact abelian group endowed with its Haar measure.

For more intricate averages, for example the average of 
$f(T^nx)\cdot f(T^{2n}x)\cdot\ldots\cdot f(T^{sn}x)$,  we need a more 
sophisticated structural analysis of the system.  In~\cite{HK1}, this 
is done by introducing a series of factors $Z_s(X) = Z_s(X, \CB, \mu, T)$ for $s\geq 1$.
The convergence of this average then follows by analyzing the 
averages separately in the factor $Z_s(X)$ and in its orthogonal complement.  

Such structural analysis only becomes useful when the factors have some 
sort of geometric or algebraic structure, and this is the content of the 
structure theorem in~\cite{HK1}.  More precisely, the factor $Z_s(X)$ is the inverse 
limit of all $s$-step  nilsystems that are factors of $(X,\CB,\mu,T)$.
We call a factor of a system that is a nilsystem a \emph{nilfactor} (for the definition of a nilsystem, see Section~\ref{subsec:nilsystems}).

The factors $Z_s(X)$ have since been used to understand other multiple 
ergodic averages (see, for example,  \cite{HK2,L3,FrW}) and 
to prove new  results  on multiple recurrence  (for example  in~\cite{BLL,FrW,Fr2}). 
Nilsystems arise naturally in other contexts such as additive combinatorics; 
as an example, we cite the recent use of nilsequences (introduced in~\cite{BHK}, 
these are functions evaluated on an 
orbit in a nilsystem) in the work on patterns in the primes,  as described in the program laid out in~\cite{GT}.

Our motivation in starting this work was to give examples of  ``simple'' and ``natural systems'' with explicit, nontrivial factors $Z_s$(X) for some $s>1$. 
Since all factors $Z_s(X)$, $s\geq 1$, are trivial for weakly mixing systems, we
restrict our attention to non-weakly mixing ones. 
Of course, the notion of a ``natural system'' is not precisely defined, 
but it is clear that a system obtained by building an  arbitrary  extension of  a given 
nilsystem  is ``artificial.''    Somewhat surprisingly, we found the task of finding non-artificial nilsystems 
harder than expected. 
It turns out that for many natural classes of systems, the factors $Z_s(X)$ coincide with the Kronecker factor $Z_1(X)$; there are spectral obstructions that force this, and
this is explained in Corollary~\ref{cor:spectrum}.

In Sections~\ref{subsec:appli1} and~\ref{subsec:appli2}, we give two applications for 
measure preserving
systems that have no nilfactors other than rotation factors. The first result is on optimal lower bounds for intersections of translates of a set, and the second application is a strengthening of results in~\cite{HK3} and~\cite{C} on the convergence of weighted polynomial multiple averages.

\subsection{Nilsystems in topological dynamics}
\label{subsec:itrotop}
As is often the case, the ergodic questions and results have counterparts in topological dynamics. 
As in the ergodic setting, we refer to topological factors that are nilsystems as 
\emph{topological nilfactors}. 
Topological nilfactors naturally arise in the structural analysis of topological dynamical
systems~\cite{HKM}. 

We recall some definitions (\cite{HKM}, see also~\cite{SY}). Let $(X,T)$ be a transitive topological system, meaning that $X$ is a compact metric space endowed with a homeomorphism $T\colon X\to X$ such that 
the orbit  $\{ T^nx\colon n\in\Z\}$ of some point $x\in X$ is dense.
We can associate to this system an increasing sequence $Z_{\text{\rm top},s}(X)$, $s\geq 1$, of topological factors. 
The first factor  $Z_{\text{top},1}(X)$ is the \emph{maximal equicontinuous factor}, also called  the \emph{topological Kronecker factor}, of $(X,T)$. It is spanned by the continuous eigenfunctions of the system, and  is topologically isomorphic to a translation on a compact abelian group.
 For $s>1$, $Z_{\text{\rm top},s}(X)$ is the inverse limit of all $s$-step topological nilfactors  of $(X,T)$. 
In~\cite{HKM} and~\cite{SY}, it is shown that topological nilfactors can be characterized by 
dynamically defined ``cubic subsets'' of $X^{2^{s+1}}$, leading to a topological analog of the ergodic theoretic structure theorem.

Finding natural topological systems with nilsystems as topological nilfactors also turns out to be nontrivial.  One already has obstructions that arise from the ergodic setting, 
by considering any invariant measure on the system. Secondly, since nilsystems have 
zero entropy, it is only of interest to consider distal systems with zero entropy. 
The relevant property seems to be some sort of complexity, and 
the notion we use is the \emph{topological complexity} inspired in the notion of 
$\varepsilon$--$n$ spanning sets (see for example~\cite{Wa} for the origin of this definition in the work of
Dinaburg and Bowen):
\begin{definition}
Let $(X,T)$ be a topological dynamical system and let $d$ be 
a distance on $X$ defining its topology. For $\varepsilon>0$ and $n\geq 1$, an \emph{$\varepsilon$--$n$ spanning set} for $(X,T)$ is a finite subset $\{x_1,\dots,x_m\}$ of $X$  such that for every $x\in X$ there exists $j\in\{1,\dots,m\}$ such that 
$$
d(T^kx,T^kx_j)<\varepsilon\text{ for every }k\in\{0,\dots,n-1\}.
$$
Let $\MS_{X,T,d}(\varepsilon,n)$ denote the minimal cardinality of an $\varepsilon$--$n$ spanning set of $(X,T)$.  When there is no possible ambiguity, we omit the 
system and metric from the notation and write $\MS(\varepsilon,n)$.  
We call the function $\MS$ the \emph{topological complexity} of the system $(X,T)$. 
\end{definition}

Clearly, this notion depends on the choice of the distance $d$ on $X$.  However, if $d$ and $d'$ are distances on $X$ defining its topology, then for every $\varepsilon>0$ there exist $\eta_1>0$ and $\eta_2>0$, both tending to zero with $\varepsilon$,  such that 
$$
\MS_{X,T,d}(\eta_1,n)\leq \MS_{X,T,d'}(\varepsilon,n)\leq \MS_{X,T,d}(\eta_2,n)
$$
for every $n\in\N$.

We also could have defined this in terms of $\varepsilon$-$n$ separated sets: 
an \emph{$\varepsilon$-$n$ separated set} for $(X,T)$ is a finite set $\{x_1, \ldots, x_m\}$ 
of $X$ such that for all $x_i\neq x_j$ there exists $k\in\{0, \ldots, n-1\}$ such that $d(T^kx_i, T^kx_j)\geq\varepsilon$.  Taking $s_{X,T,d}(\varepsilon,n)$ to be the maximal cardinality 
of an $\varepsilon$-$n$ separated set in $X$, we obtain the same behavior  as  $n$ tends to infinity and $\varepsilon$ 
tends to $0$. This remark is used  (Section~\ref{subsec:uperbound}) to give an upper bound for the complexity of nilsystems.

The notion of topological complexity used here is closely related to the notion of the complexity of a cover studied in~\cite{BHM} and our results can be rephrased in this language (see Section~\ref{subsec:main}).

We show that every nilsystem $(X,T)$  has polynomial complexity (Theorem~\ref{th:main}) with an explicit degree, and most importantly, the
degree is the same both above and below. This places a constraint on any system having 
such a system as a factor. 
The upper bound  is related to  the well known fact that in a nilsystem, the orbits of two distinct points diverge at a polynomial rate. However, our bound is a global 
invariant, as opposed to such an infinitesimal characterization, and it describes the long time behavior. 
A polynomial upper bound was given in~\cite{DDMSY}, but without a clear control on the exponent.

A natural question is whether a weak converse of Theorem~\ref{th:main} holds:
\begin{question}
Let $(X,T)$ be a minimal topological dynamical system such that 
for every $\varepsilon > 0$, there exist constants $c(\varepsilon), c'(\varepsilon) > 0$ such that 
$$
c(\varepsilon)n\leq \MS_{X,T,d}(\varepsilon,n) \leq c'(\varepsilon)n
$$
for every $n\in\N$ and $c(\varepsilon) \to \infty$ as $\varepsilon \to 0$.
Is it true that  $(X,T)$ is a $2$-step nilsystem ?
\end{question}

The assumption on the growth of $c(\varepsilon)$ is needed in order to eliminate the possibility that $(X,T)$ is a subshift or, more generally, an expansive system.

\subsection{Nilsystems in symbolic dynamics}
We recall that a \emph{subshift over the alphabet $A$} is a closed, shift invariant subset $X$ of $A^\Z$, where $A$ is a finite set and $A^\Z$ is endowed with the natural compact topology and with the \emph{shift transformation}. 
Endowed with the restriction of the shift, a subshift is a particular type of topological dynamical system.

A natural question is to decide whether a given subshift has any nilfactor other than a rotation. Conversely, given a nilsystem, we can ask what kinds of subshifts 
admit it as a factor?  
This leads us to a notion of complexity classically used for subshifts:

\begin{definition}[see for example~\cite{F3}]
Let $(X,T)$ be a subshift on the finite alphabet $A$.
 For every integer $n\geq 1$, let $\MC_X(n)$ be the number of words of length $n$ occurring in $X$. 
The \emph{combinatorial complexity} of $(X,T)$ is the function $n\mapsto \MC_X(n)$. 
When there is no possible ambiguity, we omit the space from the notation and 
write $\MC(n)$.
\end{definition}

We show (Corollary~\ref{cor:main}) that a subshift with very low combinatorial complexity 
does not admit any nilfactor other than a rotation. More generally, subshifts with low combinatorial complexity do not admit any nilfactor of large order.

The classical Morse-Hedlund Theorem states that a subshift $X$ is finite (and thus only consists of periodic sequences) if and only if 
$\MC(n)=n$ for some $n$. On the other hand, \emph{Sturmian systems} satisfy $\MC(n)=n+1$ for every $n\geq 1$, and such systems are codings of irrational rotations on the circle.
This leads to the following question:
\begin{question}
Find an ``optimal'' coding of a minimal nilsystem $(X,T)$. 
More precisely, define a subshift $(Y,S)$ having $(X,T)$ as a nilfactor and with minimal possible complexity $\MC_Y$.
\end{question}

\subsection{Some questions}
\label{subsec:questions}
The discussion in Section~\ref{subsec:nilergo} leads naturally to other questions.
In the topological study of nilsystems (Section~\ref{subsec:itrotop}), 
notions of  complexity play a key  role in understanding the existence of nilfactors. We ask if something analogous holds in the ergodic setting:
\begin{question}
\label{qu:measure-complex}
Does the conclusion of Corollary~\ref{cor:spectrum} remain valid when the spectral hypothesis is replaced by  some hypothesis on 
the ``measure-theoretic complexity''~\cite{F4} or on the ``slow entropy''~\cite{KT} of the system?
\end{question}
A related question is:
\begin{question}
Compute the ``measure-theoretic complexity'' or the ``slow entropy'' of ergodic nilsystems.
\end{question}

The ergodic results of this paper deal with factors. A natural generalization consists in proving similar results for joinings:
\begin{conjecture*}
\label{conj:joining}
Let $(X,\mu,T)$ be an ergodic system satisfying the spectral hypothesis of Corollary~\ref{cor:spectrum} and let $(Y,\nu,S)$ be an ergodic nilsystem. Then every joining of these two systems is relatively independent with respect to the corresponding joining of their Kronecker factors.
\end{conjecture*}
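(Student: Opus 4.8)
The plan is to recast the joining spectrally and to exploit that, under the hypothesis of Corollary~\ref{cor:spectrum}, the reduced spectrum of $(X,\mu,T)$ is mutually singular with the reduced spectrum of the nilsystem $(Y,\nu,S)$. Write $f=f_1+f_0$ and $g=g_1+g_0$ with $f_1=\E(f\mid Z_1(X))$ and $g_1=\E(g\mid Z_1(Y))$, so that $f_0$ is orthogonal to $L^2(Z_1(X))$ and $g_0$ to $L^2(Z_1(Y))$. Relative independence over the Kronecker factors is equivalent to the identity $\int f\otimes g\,d\lambda=\int f_1\otimes g_1\,d\lambda$, and, expanding the product, this reduces to the two orthogonality statements $\int f\otimes g_0\,d\lambda=0$ for every $f$ and $\int f_0\otimes g_1\,d\lambda=0$.

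To a joining $\lambda$ I would attach the Markov intertwining operator $\Phi\colon L^2(Y,\nu)\to L^2(X,\mu)$ given by $\Phi g=\E_\lambda(1\otimes g\mid X)$, so that $\int f\otimes\bar g\,d\lambda=\langle\Phi g,f\rangle$ and, by invariance of $\lambda$ under $T\times S$, $\Phi U_S=U_T\Phi$, where $U_T,U_S$ are the Koopman operators. The one fact needed about such an intertwining is standard: from $\Phi E_S(B)=E_T(B)\Phi$ for the spectral projections one gets that the spectral measure of $\Phi g$ with respect to $U_T$ is absolutely continuous with respect to the spectral measure $\sigma_g$ of $g$ with respect to $U_S$. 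Thus $\Phi$ preserves spectral classes: it sends functions of absolutely continuous reduced spectrum to functions of absolutely continuous reduced spectrum, and functions of discrete spectrum into the closed span of the eigenfunctions of $U_T$, namely $L^2(Z_1(X))$.

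Now I would feed in the two spectral inputs. The hypothesis of Corollary~\ref{cor:spectrum} forces the maximal spectral type of $U_T$ on $L^2(X,\mu)\ominus\C$ to be singular with respect to Lebesgue measure; on the other side, the reduced maximal spectral type of an ergodic nilsystem is absolutely continuous, which is precisely the spectral obstruction underlying the absence of higher nilfactors. Since $g_0$ has absolutely continuous spectral measure and $\int\Phi g_0\,d\mu=\int g_0\,d\nu=0$, the function $\Phi g_0$ lies in $L^2(X,\mu)\ominus\C$ and has absolutely continuous spectral measure; being dominated by a singular type it must vanish, giving $\int f\otimes g_0\,d\lambda=0$ for all $f$. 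Likewise $\Phi g_1$ has discrete spectrum, hence lies in $L^2(Z_1(X))$, which is orthogonal to $f_0$, yielding $\int f_0\otimes g_1\,d\lambda=0$. Combining the two identities gives relative independence over the Kronecker factors.

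The main obstacle is the second spectral input. For the factor statement (Corollary~\ref{cor:spectrum}) it is enough to know that a nontrivial higher-step nilsystem carries \emph{some} absolutely continuous reduced spectrum; but the intertwining argument for joinings requires the stronger fact that the reduced maximal spectral type of an ergodic nilsystem is \emph{entirely} absolutely continuous, with no singular continuous part---otherwise a singular continuous component of $Y$ could pair nontrivially with the (singular) reduced spectrum of $X$. For $2$-step nilsystems this follows from the classical Lebesgue-spectrum computation for Anzai skew products, and the representation theory of nilpotent Lie groups indicates that the infinite-dimensional irreducible constituents of $L^2(G/\Gamma)$ contribute only Lebesgue spectrum; proving this cleanly for every step is where the real difficulty lies. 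A secondary, routine point is that orthogonality on the dense span of the product functions $f\otimes g$ propagates to all of $L^2(\lambda)$.
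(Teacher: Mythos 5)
You should note at the outset that the paper does not prove this statement: it appears in Section~\ref{subsec:questions} explicitly as an open conjecture, so there is no proof of the authors' to compare against, and your attempt must stand on its own. Its skeleton is the classical spectral-disjointness argument: a joining $\lambda$ induces a Markov operator $\Phi$ with $\Phi U_S=U_T\Phi$, the spectral projections intertwine, hence $\sigma_{\Phi g}\ll\sigma_g$, and the discrete-spectrum part of $L^2(\nu)$ is sent into the closed span of the $U_T$-eigenfunctions, that is, into $L^2(Z_1(X))$. All of this machinery is correct, and it is the natural strategy here.

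The genuine gap is the sentence asserting that the hypothesis of Corollary~\ref{cor:spectrum} ``forces the maximal spectral type of $U_T$ on $L^2(X,\mu)\ominus\C$ to be singular with respect to Lebesgue measure.'' It does not. The hypothesis is only that the spectrum has no Lebesgue component of \emph{infinite multiplicity}; it allows an absolutely continuous, even Lebesgue, component of finite multiplicity. Indeed, the list following the corollary includes all systems of finite spectral multiplicity, and there exist such systems with a Lebesgue component of multiplicity two (Mathew--Nadkarni). This asymmetry is exactly what separates the corollary from the conjecture. For a \emph{factor}, $L^2$ of the factor embeds in $L^2(\mu)$ as a $U_T$-invariant subspace, so both the spectral type and the multiplicity of the nilfactor's Lebesgue part are inherited by $X$, and it is the infinite multiplicity --- not singularity --- that produces the contradiction; note that this also corrects your closing remark that ``some absolutely continuous reduced spectrum'' of the nilfactor would suffice for the corollary. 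A Markov intertwining coming from a joining, by contrast, only gives domination of spectral types; it can collapse infinite multiplicity to finite multiplicity (abstractly, $(f_1,f_2,\dots)\mapsto\sum_i 2^{-i}f_i$ intertwines a countable-multiplicity Lebesgue component with a simple one). So under the stated hypothesis nothing forces $\Phi g_0=0$: the infinite-multiplicity Lebesgue part of $(Y,\nu,S)$ could a priori correlate with a finite-multiplicity absolutely continuous part of $(X,\mu,T)$. Your argument is valid under the stronger assumption that the reduced maximal spectral type of $X$ is singular, but that proves strictly less than the conjecture --- and this multiplicity obstruction is very plausibly why the authors left the statement as a conjecture. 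Your secondary worry is also well placed: the input that $L^2(\nu)\ominus L^2(Z_1(Y))$ has purely Lebesgue spectrum for \emph{every} ergodic nilsystem is established in the paper only for $2$-step systems (Proposition~\ref{prop:lebesgue}), and the authors' reduction of the general case to the $2$-step case works for factors but not for joinings, since a joining with $Y$ does not factor through a $2$-step quotient of $Y$.
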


We conclude by returning to our original motivation:
\begin{question}
Find ``natural'' nontrivial systems in any of the settings considered here (ergodic, 
topological or symbolic) that have an explicit nilfactor other than a rotation, 
meaning systems such that $Z_1(X)\neq Z_{2}(X)$.  
More generally, describe classes of systems with $Z_s(X)\neq Z_{s+1}(X)$ for 
some $s\geq 1$.  
\end{question}

\subsection*{Acknowledgments}
We thank Dave Morris, Terry Tao, and Jean-Paul Thouvenot for helpful 
discussions. 

\section{Notation and basic definitions}
\label{subsec:basic}
\subsection{Measure preserving systems}
Throughout the article, we omit the $\sigma$-algebra of measure preserving systems 
from our notation and write  $(X,\mu,T)$ instead of
$(X, \CB, \mu, T)$. All subsets of $X$  and functions on  $X$ are implicitly assumed to be measurable.  If $X$ is given a topological structure, the $\sigma$-algebra is assumed to be the Borel $\sigma$-algebra. 
For simplicity we always assume that the transformation $T$ is 
invertible. 
When $f$ is a function on $X$, we write, as usual, 
$Tf$ instead of $f\circ T$. 
Also, in a mild abuse of notation, we use $T$ to 
denote the unitary operator $f\mapsto f\circ T$ of $L^2(\mu)$.

\subsection{Nilsystems}
\label{subsec:nilsystems}
Let $G$ be a group. For $a,b\in G$, the commutator (making a conventional 
choice in the order) of these elements is defined to be 
$$
[a,b]=aba^{-1}b^{-1}.
$$
Throughout, we make use of several standard identities: for $a,b\in G$, we have 
$[b,a]=[a,b]^{-1}$ and for $a,b,c\in G$,
\begin{equation}\label{eq:com_abc}
 [a,bc]=[a,b]\,[b,[a,c]]\,[a,c].
\end{equation}
 If $a\in A$ and $A,B$ are subsets of $G$, we write 
$[a,B]$ for the group spanned by $\{[a,b]\colon b\in B\}$ and 
 $[A,B]$ for the group spanned by $\{[a,b]\colon a\in A,\ b\in B\}$.
The subgroups $G_j$, $j\geq 1$, of $G$ are defined inductively by
$$
G_1=G\ ; \ G_{j+1}=[G,G_j]\text{ for every }j\geq 1.
$$
One can check  that 
\begin{equation}
\label{eq:GkGl}
\text{for all }k,\ell\geq 1,\quad [G_k,G_\ell]\subset G_{k+\ell}.
\end{equation}

Let $s\geq 1$ be an integer. The group $G$ is \emph{$s$-step nilpotent} if $G_{s+1}=\{1_G\}$. In particular, $G$ is $1$-step nilpotent if and only if it is abelian. If $s\geq 2$ and $G$ is $s$-step nilpotent but not $(s-1)$-step nilpotent, then we have 
\begin{equation}
\label{eq:G2Gs}
G_1\supsetneq G_2\supsetneq G_3\supsetneq\dots\supsetneq G_{s-1}\supsetneq G_s\neq\{1_G\}.
\end{equation}

If $G$ is a Lie group, then $G_0$ denotes the connected component of its unit element $1_G$.

Let $s\geq 2$ be an integer, $G$ be an $s$-step nilpotent Lie group, and $\Gamma$ be 
a discrete and cocompact subgroup of $G$. The compact manifold $X=G/\Gamma$ is called an \emph{$s$-step nilmanifold}. The group $G$ acts on $X$ by left translation, and we write this action as $(g,x)\mapsto g\cdot x$. The \emph{Haar measure} $\mu$ on $X$ is the unique Borel probability measure $\mu$ on $X$ that is invariant under this action.

Let $\tau$ be a fixed element of $G$ and let $T\colon X\to X$ be the map $x\mapsto \tau\cdot x$. Then $(X,T)$ is called a  \emph{topological $s$-step nilsystem} and $(X,\mu,T)$ a 
\emph{measure theoretical $s$-step nilsystem}, or just an 
\emph{$s$-step nilsystem}. The basic  properties of nilsystems were established in~\cite{AGH} and~\cite{P}, and 
a more modern presentation is found in~\cite{L1}. In particular, we have the equivalences: $(X,T)$ is transitive if and only if it is minimal if and only if it is uniquely ergodic if and only if $(X,\mu,T)$ is ergodic.

If $(X,T)$ is minimal then, writing $G'$ for the subgroup of $G$ spanned by $G_0$ and $\tau$ and setting $\Gamma'=\Gamma\cap G'$, we have that $G=G'\Gamma$.  Thus $(X,T)\cong(X',T')$ where $X'=G'/\Gamma'$ and $T'$ is the translation by $\tau$ on $X'$. Therefore, 
without loss of generality we can restrict 
to the case that $G$ is spanned by $G_0$ and $\tau$.

We can also assume that $G_0$ is simply connected (see for example~\cite{M} or~\cite{AGH} for the case that $G=G_0$ and~\cite{L2} for the general case).

\section[Systems without nilfactors]{Measure theoretical results: systems without nilfactors}

\subsection{The spectrum of a nilsystem}
\label{subsec:spectral}

Improving a result of Green~\cite{AGH}, Stepin proved (see Starkov~\cite{Sta} 
for history and comments): 
\begin{theorem*}[\cite{St}]
\label{th:spectrum}
Let $(X=G/\Gamma,\mu,T)$ be an 
ergodic nilsystem with connected, simply connected group $G$. Then $L^2(\mu)$   can be written as the orthogonal sum $L^2(\mu)=\CH\oplus\CH'$ of two $T$-invariant subspaces.  
The space $\CH$ consists of functions $f\in L^2(\mu)$ that factorize through $G/G_2\Gamma$ and the restriction of $T$ to this space has discrete spectrum. The restriction of $T$ to $\CH'$ has Lebesgue spectrum of infinite multiplicity.
\end{theorem*}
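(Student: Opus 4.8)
The plan is to diagonalize the left-translation action of $G$ on $L^2(\mu)$ and read the spectrum of $T$ off the pieces. Since $X=G/\Gamma$ is compact and $G$ is nilpotent, the unitary representation of $G$ on $L^2(\mu)$ splits as a countable orthogonal sum of irreducible subrepresentations, and on each summand $T$ acts as $\pi(\tau)$. First I would identify $\CH$ with the closed span of the one-dimensional subrepresentations: a vector spanning a $G$-invariant line transforms by a character of $G$, which kills $G_2=[G,G]$ and must be trivial on $\Gamma$ in order to descend to $X$; hence it factors through the torus $G/G_2\Gamma$. Conversely every character of this torus occurs, so $\CH$ is exactly the pullback of $L^2(G/G_2\Gamma)$, on which $T$ acts as the ergodic rotation induced by $\tau$ and therefore has discrete spectrum. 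Both $\CH$ and $\CH'=\CH^\perp$ are $T$-invariant, since $\CH$ is the $L^2$-space of a factor.

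It remains to show that $T$ has Lebesgue spectrum of infinite multiplicity on $\CH'$, the closed span of the infinite-dimensional irreducibles. I would organize this by induction on the nilpotency step $s$, peeling off the central subgroup $G_s$. Let $K=G_s/(G_s\cap\Gamma)$; since $G_s$ is central and $G_s\cap\Gamma$ is cocompact in $G_s$, the group $K$ is a torus and $X\to G/G_s\Gamma$ exhibits $X$ as a $K$-extension of the $(s-1)$-step ergodic nilsystem $G/G_s\Gamma$. Decomposing $L^2(\mu)=\bigoplus_\xi H_\xi$ over the characters $\xi$ of $K$, the component $H_0$ of $G_s$-invariant functions is the $L^2$-space of this lower-step factor, so the inductive hypothesis applies: its discrete part lands in $\CH$ (as $G_s\subset G_2$) and its Lebesgue part in $\CH'$. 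The base case $s=1$ is a torus rotation, where $\CH'=\{0\}$.

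The heart of the argument is the claim that for each nontrivial $\xi$ the operator $T$ restricted to $H_\xi$ has Lebesgue spectrum of infinite multiplicity. For $u\in G_{s-1}$ we have $[\tau,u]\in[G,G_{s-1}]=G_s$, which is central; by the commutator identity~\eqref{eq:com_abc} together with the centrality of $G_s$, the map $u\mapsto[\tau,u]$ is a homomorphism $G_{s-1}\to G_s$, so $\phi_\xi(u):=\xi([\tau,u])$ is a continuous character of the connected group $G_{s-1}$. On $H_\xi$, left translation $V_u$ by $u$ and $T=\pi(\tau)$ satisfy the Weyl relation $TV_u=\phi_\xi(u)\,V_uT$, that is $V_u^{-1}TV_u=\phi_\xi(u)\,T$. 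If $\phi_\xi\not\equiv 1$, then, being a nontrivial character of a connected group, it is onto $\T$, so the family $\{V_u\}$ contains a one-parameter group implementing every rotation of the spectrum of $T$; the pair consisting of $T$ and the generator of this group satisfies a canonical commutation relation, and by Stone--von Neumann theory such a $T$ has Lebesgue spectrum of uniform, countably infinite multiplicity. Summing over the countably many $\xi\neq 0$, together with the inductively obtained Lebesgue part of $H_0$, gives Lebesgue spectrum of infinite multiplicity on $\CH'$.

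The main obstacle is precisely the nondegeneracy input $\phi_\xi\not\equiv 1$. That $\phi_\xi\neq 1$ for \emph{some} $u$ would be automatic from $\xi\neq 0$ if $G_s$ were generated by $[\tau,G_{s-1}]$, but a priori $\xi$ could annihilate $[\tau,G_{s-1}]$ while remaining nontrivial on $[G,G_{s-1}]=G_s$; ruling this out for the specific generator $\tau$ is where the ergodicity hypothesis must genuinely be used, the natural strategy being to show that $\phi_\xi\equiv 1$ would produce an invariant structure contradicting ergodicity. A more robust route to the same conclusion, if the representation-theoretic bookkeeping proves delicate, is to view $H_\xi$ as an Anzai-type skew product over the lower nilsystem with a nondegenerate polynomial cocycle and to establish square-summability of the correlations $\langle T^nf,g\rangle$ by a van der Corput estimate; the commutation relation is then what upgrades the resulting absolutely continuous spectrum to the full Lebesgue class with uniform infinite multiplicity.
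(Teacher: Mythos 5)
Your overall architecture --- decompose $L^2(\mu)$ over the characters $\xi$ of the central torus $K=G_s/(G_s\cap\Gamma)$, identify $\CH$ with the $G_2$-invariant (equivalently, one-dimensional-subrepresentation) part, and extract Lebesgue spectrum on each $H_\xi$, $\xi\neq 1$, from the Weyl relation $TV_u=\phi_\xi(u)V_uT$ with $\phi_\xi(u)=\xi([\tau,u])$, $u\in G_{s-1}$ --- is the classical Auslander--Green--Hahn route, and it is genuinely different from what the paper does: the paper does not reprove this theorem (it cites Stepin) and instead proves, in Appendix~\ref{sec:prooflebesgue}, a $2$-step version with no connectedness hypothesis, by showing that smooth functions in each isotypic component $\CH_\chi$ have summable Fourier coefficients of their spectral measures (hence absolutely continuous spectrum), and then upgrading to Lebesgue equivalence via quasi-invariance under the dense group of eigenvalues. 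Your plan is viable for the connected, simply connected case, and the reduction to the claim about $H_\xi$ is correct. However, there is a genuine gap, and it is exactly the step you flag as ``the main obstacle'': the nondegeneracy $\phi_\xi\not\equiv 1$ for \emph{every} nontrivial $\xi\in\wh K$ is identified as the crux but never proved, and neither of your suggested repairs (an unspecified ``invariant structure contradicting ergodicity,'' or a van der Corput estimate) is carried out. Since everything else in your argument (isotypic decomposition, commutation relation, Stone--von Neumann/imprimitivity) is soft, the entire content of the theorem sits in this missing step; without it nothing has been proved about $\CH'$.

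The gap is real and not a routine verification, because the statement one would naturally reach for is false: $[\tau,G_{s-1}]$ need \emph{not} be dense in $G_s$. For instance, in the free $2$-step nilpotent Lie group on three generators one has $G_2\cong\R^3$, and for every $\tau$ the group generated by $[\tau,G]$ is the proper plane $v\wedge\R^3$, where $v$ is the image of $\tau$ in the abelianization. What is true --- and what your argument needs --- is that the \emph{image} of $[\tau,G_{s-1}]$ in the torus $K$ is dense, and proving this requires using both the lattice $\Gamma$ and ergodicity. One correct argument: represent $\xi$ by a linear functional $\lambda$ on $G_s$ with $\lambda(G_s\cap\Gamma)\subset\Z$; the commutator pairing descends to a bilinear map $(G/G_2)\times(G_{s-1}/G_s)\to G_s$ sending lattice pairs into $\Gamma\cap G_s$, so $\beta(v):=\lambda([v,\cdot])$ is a linear map from $G/G_2$ into the dual of $G_{s-1}/G_s$ that carries the lattice $\Gamma G_2/G_2$ into the (discrete) dual lattice of the lattice $(\Gamma\cap G_{s-1})G_s/G_s$. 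Ergodicity makes $\Z\bar\tau+\Gamma G_2/G_2$ dense in $G/G_2$; if $\phi_\xi\equiv 1$, i.e.\ $\beta(\bar\tau)=0$, then the subspace $\beta(G/G_2)$ lies in the closure of a discrete set, hence is trivial, so $\lambda$ annihilates $[G,G_{s-1}]$, which generates $G_s$; thus $\lambda=0$, a contradiction. Some such lemma is the indispensable missing piece; note that the paper's Appendix faces the same issue in its setting and secures its analogue (density in $G_2$ of the group spanned by $[\tau,G_0]$) only after first reducing to the case where $\Gamma$ contains no nontrivial normal subgroup of $G$. A last, minor point: Stone--von Neumann gives Lebesgue spectrum of \emph{uniform} multiplicity at least one on each $H_\xi$, not by itself countably infinite multiplicity; as you say, infinitude then comes from summing over the infinitely many nontrivial $\xi$, each $H_\xi$ being nonzero because $K$ acts freely on $X$.
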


Green,  Stepin,  and  Starkov only
 consider the case of a connected, simply connected group.
 But, in view of our applications, we need a similar result without any assumption 
 that the group be connected.  While the extension of Stepin's result to connected,  but not simply connected groups, is standard, the generalization for non-connected groups is 
 harder.  Nilsystems arising from non-connected groups can be quite different than those arising from connected ones, and there does not seem to be a direct method of deducing the general case from the particular one.  Adapting the existing proofs requires many changes, 
 and so instead of modifying existing proofs we give a different one.

For any $s> 2$, any $s$-step ergodic nilsystem that is not a rotation 
admits a $2$-step nilfactor that is not a rotation.  Thus, 
we only need such a spectral result for $2$-step nilsystems: 

\begin{proposition}
\label{prop:lebesgue}
Let $(X=G/\Gamma,\mu,T)$ be an 
ergodic $2$-step nilsystem that is not a rotation. Then $L^2(\mu)$ can be written as the orthogonal sum $L^2(\mu)=\CH\oplus\CH'$ of two closed $T$-invariant subspaces such that the restriction of $T$ to $\CH$ has discrete spectrum and its restriction to $\CH'$ has Lebesgue spectrum of infinite multiplicity.
\end{proposition}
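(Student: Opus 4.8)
The plan is to diagonalize $T$ along the central fibration of $X$. Since $(X,\mu,T)$ is $2$-step, $G_3=[G,G_2]=\{1_G\}$, so $G_2$ is central in $G$. Using the reductions recorded in Section~\ref{subsec:nilsystems}, I assume that $G$ is spanned by $G_0$ and $\tau$ and that $G_0$ is simply connected; since $G/G_0$ is abelian one has $G_2=[G,G]\subset G_0$, so $G_2$ is a connected, simply connected central subgroup with $G_2\cong\R^k$, and $G_2\cap\Gamma$ is cocompact in $G_2$, so $Z:=G_2/(G_2\cap\Gamma)\cong\T^k$ is a torus acting on $X$ by translation. The quotient $A:=G/G_2\Gamma$ is a compact abelian group on which the induced map $\bar T$ is an ergodic rotation: this is the Kronecker factor. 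Because $(X,\mu,T)$ is not a rotation, $G_2\neq\{1_G\}$, so $Z$ is a nontrivial torus and its dual $\wh Z\cong\Z^k$ is infinite.

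Next I decompose $L^2(\mu)$ according to the characters of the central torus. For $\chi\in\wh Z$ set
\[
\CH_\chi=\{f\in L^2(\mu): f(z\cdot x)=\chi(z)f(x)\text{ for all }z\in Z\},
\]
so that $L^2(\mu)=\bigoplus_{\chi\in\wh Z}\CH_\chi$. Because $G_2$ is central, each $\CH_\chi$ is $T$-invariant: for $f\in\CH_\chi$ and $z\in Z$ one has $(Tf)(z\cdot x)=f(\tau z\cdot x)=f(z\tau\cdot x)=\chi(z)(Tf)(x)$. I therefore set $\CH=\CH_0$ and $\CH'=\bigoplus_{\chi\neq 0}\CH_\chi$, which are closed and $T$-invariant. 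The space $\CH_0$ consists exactly of the functions that factor through $A=G/G_2\Gamma$, and on it $T$ acts as the rotation $\bar T$ of a compact abelian group, hence has discrete spectrum, as required.

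It remains to analyze $\CH_\chi$ for $\chi\neq 0$ and to show that $T$ restricted to $\CH'$ has Lebesgue spectrum of infinite multiplicity. The mechanism is already visible in the Anzai skew product $T(x,y)=(x+\alpha,\,y+x)$ on $\T^2$: on the $n$-th central mode $\CH_n=\{\e^{2\pi i n y}g(x):g\in L^2(\T)\}$ the operator is $g(x)\mapsto \e^{2\pi i n x}g(x+\alpha)$, which sends the character $\e^{2\pi i m x}$ to $\e^{2\pi i m\alpha}\,\e^{2\pi i (m+n)x}$; thus it is a weighted shift by $n$ steps, and for $n\neq 0$ its spectral measure is Lebesgue of multiplicity $|n|$. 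In the general case I carry this out by expanding each $f\in\CH_\chi$ in the characters of a maximal abelian subgroup $M$ of $G$ containing $G_2$. Conjugation by $\tau$ sends $m\mapsto[\tau,m]\,m$ with $[\tau,m]\in G_2$ (using~\eqref{eq:com_abc} to track commutators), so, after reading off $\chi$ on the central factor, $T$ acts on this $M$-Fourier expansion as a weighted shift whose step is the image of the character $m\mapsto\chi([\tau,m])$ in the infinite discrete dual $\wh{M/(M\cap\Gamma)}$; the spectral type on each orbit of such a shift is Lebesgue precisely when the step has infinite order. I expect the main obstacle to be exactly this nondegeneracy point: one must invoke the non-rotation hypothesis (equivalently $G_2\neq\{1_G\}$) together with ergodicity to rule out point spectrum in $\CH_\chi$ for every nontrivial $\chi$, i.e.\ to guarantee that the shift step has infinite order and no finite-dimensional pieces survive. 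The non-connected case, where $\tau$ may have infinite order modulo $G_0$, requires the most care, since the choice of $M$ and the bookkeeping of the shift orbits become more delicate there. Summing the resulting Lebesgue components over the infinitely many $\chi\in\wh Z\setminus\{0\}$ then yields Lebesgue spectrum of infinite multiplicity on $\CH'$.
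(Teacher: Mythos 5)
Your first two steps coincide with the paper's own: you decompose $L^2(\mu)$ into the spaces $\CH_\chi$ of functions transforming under the central torus according to a character $\chi$, check $T$-invariance using centrality of $G_2$, identify $\CH_0$ with the functions factoring through the Kronecker factor $G/G_2\Gamma$ (hence discrete spectrum there), and note that the non-rotation hypothesis gives infinitely many nontrivial characters. All of this is correct, and it is exactly how the paper begins.

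The gap is that the heart of the proposition --- that for each nontrivial $\chi$ the restriction of $T$ to $\CH_\chi$ has Lebesgue spectrum --- is never actually proved. You sketch an Anzai-type weighted-shift picture and then explicitly defer the two points on which it hinges: the nondegeneracy that the ``shift step'' $m\mapsto\chi([\tau,m])$ has infinite order, and the non-connected case. But these \emph{are} the proof; the paper states plainly that the non-connected case is the reason a new argument was needed at all. Concretely, the paper's Lemma~\ref{lem:lebesgue1} establishes absolute continuity of the spectral measure of every $f\in\CH_\chi$, and its proof requires reductions you do not make (quotienting by the largest normal subgroup of $G$ contained in $\Gamma$, which makes the action faithful, forces $\Gamma$ abelian and $G_2$ a compact torus, and yields density in $G_2$ of the group spanned by $[\tau,G_0]$); your missing nondegeneracy is exactly the paper's argument that the group spanned by the character $\widetilde\chi$ defined by $\widetilde\chi\circ q(g)=\chi[g,\tau]$ is noncompact, hence discrete. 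The conclusion is then reached not through a shift basis but by writing $\tau^n=g_n\gamma_n$ with $g_n$ in a fixed compact set and proving $\sum_n|\wh{\sigma_f}(n)|<+\infty$ for smooth $f\in\CH_\chi$. Your proposed mechanism also has a flaw independent of these omissions: for a non-central abelian subgroup $M$ of $G$, the translation action of $M$ on $X$ does not factor through $M/(M\cap\Gamma)$ (for $m\in M\cap\Gamma$ one has $mg\Gamma\neq g\Gamma$ in general), so ``expanding $f\in\CH_\chi$ in characters of $M/(M\cap\Gamma)$'' is not a well-defined decomposition of $L^2(\mu)$; the weighted-shift model is literally available only in examples like the Anzai skew product. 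Finally, even granting absolute continuity, upgrading it to \emph{Lebesgue} spectrum requires a further step that your outline omits: the paper uses quasi-invariance of the maximal spectral type of $\CH_\chi$ under the dense group of eigenvalues of the Kronecker factor to show this type is equivalent to Lebesgue measure.
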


The proof is elementary, but lengthy, and so we postpone it to
Appendix~\ref{sec:prooflebesgue}.

\begin{corollary}
\label{cor:spectrum}
Let $(X,\mu,T)$ be an ergodic system and assume that its spectrum does not admit a Lebesgue component with infinite multiplicity.
Then this system does not admit any nilsystem as a factor, other than a rotation factor.
\end{corollary}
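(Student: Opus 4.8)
The plan is to argue by contraposition: I would show that if the system admits some nilfactor that is not a rotation, then its spectrum must contain a Lebesgue component of infinite multiplicity. So suppose $(X,\mu,T)$ has a factor $(Y,\nu,S)$ that is a nilsystem but not a rotation. Since $(X,\mu,T)$ is ergodic, this nilfactor is itself ergodic, and being not a rotation it is an $s$-step nilsystem for some $s \geq 2$. The first key step is a reduction in the step: invoking the remark made just before Proposition~\ref{prop:lebesgue}, any $s$-step ergodic nilsystem with $s > 2$ that is not a rotation admits a $2$-step nilfactor that is not a rotation. Hence, after replacing $(Y,\nu,S)$ by this sub-nilfactor if necessary, I may assume $(Y,\nu,S)$ is an ergodic $2$-step nilsystem that is not a rotation, and it is a factor of $(X,\mu,T)$.

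The second step applies Proposition~\ref{prop:lebesgue} directly to $(Y,\nu,S)$. This gives an orthogonal decomposition $L^2(\nu)=\CH\oplus\CH'$ into closed $S$-invariant subspaces such that $S$ restricted to $\CH'$ has Lebesgue spectrum of infinite multiplicity. Because $(Y,\nu,S)$ is not a rotation, the space $\CH'$ is nontrivial, so this Lebesgue component genuinely appears in the spectrum of $(Y,\nu,S)$.

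The third and final step is to transport this spectral information up to $(X,\mu,T)$. The factor map $\pi\colon X\to Y$ induces an isometric embedding $L^2(\nu)\hookrightarrow L^2(\mu)$, $f\mapsto f\circ\pi$, that intertwines $S$ with $T$: this is the standard fact that the Koopman operator of a factor is unitarily equivalent to the restriction of the Koopman operator of the total system to the invariant subspace $L^2(\nu)\circ\pi$. Consequently the spectral type of $S$ is a part of the spectral type of $T$, and in particular the Lebesgue component of infinite multiplicity on $\CH'\circ\pi\subseteq L^2(\mu)$ is a Lebesgue component of infinite multiplicity in the maximal spectral type of $(X,\mu,T)$. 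This contradicts the hypothesis that the spectrum of $(X,\mu,T)$ admits no such component, completing the contrapositive argument.

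The only genuinely substantive ingredient is Proposition~\ref{prop:lebesgue}, which is assumed (and proved in the appendix), together with the step-reduction remark; given those, the remaining work is the routine spectral functoriality of factors. I expect the main subtlety to be purely in the bookkeeping of that reduction---making sure the $2$-step nilfactor obtained really is nontrivial and really is a factor of $(X,\mu,T)$ rather than merely of $(Y,\nu,S)$---but since factors of factors are factors, this causes no difficulty, and the spectral containment then follows immediately.
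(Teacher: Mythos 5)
Your proof is correct and is exactly the argument the paper intends: the paper leaves the corollary's proof implicit, relying on the remark preceding Proposition~\ref{prop:lebesgue} (reduction of any non-rotation nilfactor to a $2$-step non-rotation nilfactor), the proposition itself, and the standard fact that the Koopman operator of a factor embeds as a unitary subrepresentation of that of the full system. Your contrapositive formulation and bookkeeping of the reduction match the paper's approach in every essential respect.
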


This result applies, in particular, to
\begin{itemize}
\item 
Weakly mixing systems.
\item
Systems with singular maximal spectral type.
\item
Systems with finite spectral multiplicity. This class includes: 
\begin{itemize}
\item
Systems of finite rank, for example  substitution dynamical systems, linearly recurrent systems, 
Bratteli-Vershik systems of finite topological rank, and
interval exchange transformations (for definitions and references, see 
for example~\cite{ORW,Q,D,BDM,GJ}). 
\item 
Systems of local rank one  or of 
 funny rank one (see~\cite{F1,F2}, where the definitions are attributed to J.-P.~Thouvenot).
\end{itemize}
\end{itemize}

Since nilsystems have zero entropy, the result also applies to
\begin{itemize}
\item Systems whose Pinsker factor belongs to one of the preceding types.
\end{itemize}

\subsection{First application: lower bounds for multiple recurrence}
\label{subsec:appli1}

\begin{theorem}
\label{th:bornpol}
Assume that $(X,\mu,T)$ is an ergodic system satisfying $Z_s(X)=Z_1(X)$ for all $s>1$, 
for example a system satisfying one of the properties listed after Corollary~\ref{cor:spectrum}.
Let $p_1$, \dots, $p_k$ be integer polynomials satisfying $p_i(0)=0$ for $1\leq i\leq k$. 
Then for every $A\subset X$ and every $\varepsilon >0$, the set
\begin{equation}
\label{eq:progpolk}
\bigl\{n\in\N\colon
\mu(A\cap T^{-p_1(n)}A\cap T^{-p_2(n)}A\cap\ldots\cap T^{-p_k(n)}A) >\mu(A)^{k+1}-\varepsilon\bigr\}
\end{equation}
is syndetic.
\end{theorem}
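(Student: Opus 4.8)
The plan is to push the correlation sequence down to the Kronecker factor, where the desired lower bound can be read off directly from Jensen's inequality, and then to control the remaining error without destroying syndeticity.

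First I would set $a=\mu(A)$ and $g=\E(\one_A\mid Z_1(X))$, so that $0\le g\le 1$ and $\int g\,d\mu=a$. The Kronecker factor $Z_1(X)$ is a rotation by some element $\alpha$ on a compact abelian group $Z$ with Haar measure $m$, and I view $g$ as a function on $Z$. Since the polynomials $p_i$ have bounded degree, for some finite $s$ (depending only on $k$ and the $\deg p_i$) the factor $Z_s(X)$ is characteristic for the average of $\one_A\cdot T^{p_1(n)}\one_A\cdots T^{p_k(n)}\one_A$, and I would invoke the structure theorem for polynomial multiple correlation sequences (the polynomial extension, due to Leibman, of the Bergelson--Host--Kra result \cite{BHK}) to write
$$
I(n)=\int \one_A\cdot T^{p_1(n)}\one_A\cdots T^{p_k(n)}\one_A\,d\mu=\psi(n)+\nu(n),
$$
where $\psi$ is the same correlation computed in the factor $Z_s(X)$ and $\nu$ tends to $0$ in uniform density, that is, $\frac1N\sum_{n=M+1}^{M+N}|\nu(n)|\to 0$ uniformly in $M$. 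By the hypothesis $Z_s(X)=Z_1(X)$, the sequence $\psi$ is a rotation correlation:
$$
\psi(n)=\int_Z g\cdot R^{p_1(n)}g\cdots R^{p_k(n)}g\,dm=\Psi\bigl(p_1(n)\alpha,\dots,p_k(n)\alpha\bigr),\qquad \Psi(t_1,\dots,t_k)=\int_Z g(z)\,g(z+t_1)\cdots g(z+t_k)\,dm(z).
$$

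Next I would exploit that $\Psi\colon Z^k\to\R$ is continuous (as $g$ is bounded and translation is continuous in $L^1(m)$) and that, at the origin, Jensen's inequality gives $\Psi(0,\dots,0)=\int_Z g^{k+1}\,dm\ge(\int_Z g\,dm)^{k+1}=a^{k+1}$. Hence there is an open neighborhood $U$ of $0$ in $Z^k$ on which $\Psi>a^{k+1}-\varepsilon/2$, and it suffices to show that $S=\{\,n\colon(p_1(n)\alpha,\dots,p_k(n)\alpha)\in U\,\}$ is syndetic, since $\psi(n)>a^{k+1}-\varepsilon/2$ for $n\in S$. Because $p_i(0)=0$, the polynomial orbit $n\mapsto(p_1(n)\alpha,\dots,p_k(n)\alpha)$ in $Z^k$ passes through $0$ at $n=0$; realizing it as a linear orbit on a nilmanifold and using that orbit closures in nilsystems are minimal, I would conclude that its return times to any neighborhood of its starting point are syndetic, so $S$ is syndetic. (For a non-Lie group $Z$ one reduces to finitely many characters, i.e.\ to finitely many real polynomials with zero constant term, and applies the same fact on a finite-dimensional torus.) To finish, let $B=\{n:|\nu(n)|\ge\varepsilon/2\}$; uniform-density convergence makes $B$ of uniform density zero, and a short counting argument (if $S$ has gaps at most $L$ and $|B\cap[M,M+W)|<\delta W$ with $\delta<1/(2L)$ for $W$ large, then $S\setminus B$ meets every such window) shows $S\setminus B$ is still syndetic. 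For $n\in S\setminus B$ one gets $I(n)=\psi(n)+\nu(n)>a^{k+1}-\varepsilon=\mu(A)^{k+1}-\varepsilon$.

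The main obstacle I anticipate is that syndeticity must survive at two distinct points, and each is sharper than what naive equidistribution provides. The recurrence set $S$ must be \emph{syndetic} rather than merely of positive density, which forces the use of minimality of the polynomial orbit (via its nilsystem realization) instead of a Weyl-type equidistribution count; and the bad set $B$ must be removable while keeping syndeticity, which is exactly why I need the error $\nu$ to be null in \emph{uniform} density (ruling out long runs) rather than only in ordinary or upper density. The one genuinely external input is the refinement of the characteristic-factor statement to the termwise decomposition $I=\psi+\nu$, as opposed to a purely Cesàro statement; everything after that is the syndetic bookkeeping together with the one-line Jensen bound that produces the optimal exponent $k+1$.
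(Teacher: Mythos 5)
Your proposal is correct, and its skeleton --- replace the correlation sequence by its projection to the Kronecker factor up to an error null in uniform density, bound the Kronecker correlation near the origin by Jensen plus continuity of translation, prove syndeticity of the polynomial return times, then remove the bad set by a counting argument --- is exactly the paper's. The differences are in two ingredients. For the decomposition $I=\psi+\nu$, the paper does not invoke Leibman's correlation-sequence theorem as a black box: it starts from the mean convergence results of~\cite{HK2} and~\cite{L2}, which make $Z_s$ characteristic in the Ces\`aro sense, and then upgrades to uniform-density convergence at the cost of passing to $Z_{s+1}$, following the deduction of Corollary~4.5 from Theorem~4.4 in~\cite{BHK}; under the hypothesis $Z_s(X)=Z_1(X)$ for all $s$ this costs nothing, so the two routes are interchangeable. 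For the syndeticity of $S=\{n\colon (p_1(n)\alpha,\dots,p_k(n)\alpha)\in U\}$, the paper uses Weyl's theorem~\cite{We} in its \emph{well-distribution} form: the polynomial sequence is equidistributed uniformly in the starting point $M$ in a closed subgroup $H$ of the torus containing $0$, and uniform convergence of the averages of a continuous bump supported in $U\cap H$ forces a visit in every sufficiently long interval. Your route --- reduce to finitely many characters, realize the polynomial orbit as a linear orbit of a unipotent affine (equivalently, nil-) system, and use that orbit closures of such systems are minimal, so return times to a neighborhood of the initial point are syndetic --- is also valid and standard (distality, or nilsystem theory, gives minimality of orbit closures); it avoids any equidistribution computation at the price of the realization step. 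One correction, though: your stated reason for rejecting a Weyl-type argument, namely that equidistribution yields only positive density, is mistaken. Well-distribution (uniformity in $M$) upgrades positive density to syndeticity, and that is precisely how the paper argues; so the obstacle you anticipated there is not a real one.
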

\begin{remark}
Here and in Theorems~\ref{th:bornelin} and~\ref{th:weighted},
the hypothesis can be replaced by $Z_2(X)=Z_1(X)$; it is known that this condition  implies that $Z_s(X)=Z_1(X)$ for all $s>1$ (this follows, for example, from the inclusions~\eqref{eq:G2Gs} in Section~\ref{subsec:nilsystems}).
\end{remark}

In~\cite{BHK} it is showed that  the conclusion of Theorem~\ref{th:bornpol} does not hold for non-ergodic systems, even in the simple case of 
$k=2$, $p_1(n)=n$ and $p_2(n)=2n$. The conclusion also fails for general ergodic systems, for example for $k\geq 4$ and $p_j(n)=jn$ for $1\leq j\leq k$. In both of these cases, the set of integers defined by~\eqref{eq:progpolk} may be empty.

On the other hand, the conclusion of Theorem~\ref{th:bornpol} holds for weakly mixing systems~\cite{B}. Similar lower bounds for some particular choices of polynomials are found in~\cite{BHK,FrK,Fr1}.

For convenience, we begin the proof with the case of linear exponents and then explain how the method extends to the polynomial case. We first show:

\begin{theorem}
\label{th:bornelin}
Assume that  $(X, \mu, T)$ is an ergodic system with 
$Z_s(X)=Z_1(X)$ for all $s>1$, for example a system satisfying one of the properties listed after Corollary~\ref{cor:spectrum}.
Then
for any integer $k\geq 1$, any set $A\subset X$, and any $\varepsilon >0$, the set
\begin{equation}
\label{eq:progk}
\{n\in\N\colon\mu(A\cap T^{-n}A\cap \cdot\ldots\cdot\cap T^{-kn}A)> \mu(A)^{k+1}-\varepsilon\}
\end{equation}
is syndetic.
\end{theorem}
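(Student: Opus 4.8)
The plan is to reduce everything to the Kronecker factor and exploit the hypothesis $Z_s(X)=Z_1(X)$. By the structure theory of~\cite{HK1}, the limiting behavior of the multiple ergodic average $\frac1N\sum_{n=1}^N \one_A\cdot T^{-n}\one_A\cdots T^{-kn}\one_A$ is controlled by the factor $Z_{k-1}(X)$ (the relevant characteristic factor for a linear average of length $k+1$). Since $Z_s(X)=Z_1(X)$ for all $s>1$, this characteristic factor is simply the Kronecker factor $Z_1(X)$, which is a rotation $(K,\rho)$ on a compact abelian group $K$ with Haar measure $m$. Thus I would first assert that
\begin{equation}
\label{eq:reduce}
\mu\bigl(A\cap T^{-n}A\cap\cdots\cap T^{-kn}A\bigr)
=\int_K g\cdot S^{n}g\cdots S^{kn}g\,dm + o(1)
\end{equation}
in density, where $g=\E(\one_A\mid Z_1(X))$ is the conditional expectation onto the Kronecker factor and $S$ denotes the rotation; replacing $\one_A$ by $g$ in all but one factor introduces only a density-negligible error because the orthogonal complement of the characteristic factor contributes nothing to the limit.

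Next I would work entirely inside the rotation $(K,\rho,m)$. Here $g$ is a bounded nonnegative function with $\int_K g\,dm=\mu(A)$, and the key point is that on a group rotation the map $n\mapsto S^n g$ varies continuously (almost periodically) in $L^2(m)$: the orbit closure of $g$ under the rotation is compact. The goal is the pointwise-in-$n$ lower bound
\begin{equation}
\label{eq:innerbound}
\int_K g\cdot S^{n}g\cdots S^{kn}g\,dm\ge \Bigl(\int_K g\,dm\Bigr)^{k+1}-\varepsilon
\end{equation}
for a syndetic set of $n$. For this I would choose $n$ so that all the rotations $S^{n},S^{2n},\dots,S^{kn}$ are simultaneously close to the identity in the uniform topology on $K$; by almost periodicity (equivalently, by the fact that $\{(\rho^n,\rho^{2n},\dots,\rho^{kn})\colon n\in\Z\}$ accumulates at the identity of $K^k$ along a syndetic set), each translate $S^{jn}g$ can be made $L^2$-close to $g$, so the integral in~\eqref{eq:innerbound} is close to $\int_K g^{k+1}\,dm\ge(\int_K g\,dm)^{k+1}$, the last inequality being Jensen (or H\"older). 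Combining this with~\eqref{eq:reduce} yields the claimed lower bound for the original intersection along a syndetic set.

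The main obstacle is making the two ``syndetic'' conditions compatible and quantitatively clean. The density error in~\eqref{eq:reduce} only controls the average over $n$, not individual values, so I must argue that the set where the characteristic-factor approximation is good has density one, while the set where the simultaneous-return condition~\eqref{eq:innerbound} holds is syndetic; a syndetic set intersected with a density-one set is still syndetic, which is exactly what I need, but this requires care since syndeticity is not automatically preserved under passing to the characteristic factor. The cleanest route is to prove the lower bound directly for the limit of averages and then upgrade to a syndetic conclusion using the fact that the function $n\mapsto\int_K g\cdot S^ng\cdots S^{kn}g\,dm$ is itself almost periodic (being a finite combination of values of a continuous function on the compact orbit closure), so that the set on which it exceeds $(\int g)^{k+1}-\varepsilon$ is genuinely syndetic, not merely positive-density. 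Establishing this almost-periodicity of the correlation sequence, and hence the syndeticity rather than a weaker density statement, is the technical heart of the argument.
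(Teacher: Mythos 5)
Your outline follows the same architecture as the paper's proof: reduce to the Kronecker factor, produce a syndetic set of times $n$ where the Kronecker correlation $b_n=\int g\cdot S^ng\cdots S^{kn}g\,dm$ exceeds $(\int g\,dm)^{k+1}-\varepsilon$ (almost periodicity plus Jensen/H\"older), and intersect with a large set where the original correlation is close to $b_n$. But there is a genuine gap at the reduction step, and you half-acknowledge it without repairing it. Your first displayed approximation --- that $a_n:=\mu(A\cap T^{-n}A\cap\cdots\cap T^{-kn}A)-b_n$ tends to $0$ \emph{in density} --- does not follow from the characteristic-factor property of~\cite{HK1} that you invoke. That property only says that the Ces\`aro averages of $a_n$ tend to $0$: the orthogonal complement of $Z_{k-1}(X)$ contributes nothing \emph{to the limit of the averages}. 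Since $a_n$ can oscillate in sign, this gives no control on $|a_n|$ off a sparse set of $n$. Your final paragraph concedes exactly this (``the density error only controls the average over $n$, not individual values'') and then says you ``must argue'' that the approximation is good on a density-one set --- but that argument is never supplied, and it is the real content of the step. The paper imports it as Corollary~4.6 of~\cite{BHK}: the multicorrelation sequence differs from its projection to the (nil)factor by a sequence converging to zero \emph{in uniform density}, a statement whose proof rests on the nilsequence decomposition of~\cite{BHK}, not on mean convergence. So the ``technical heart'' is not, as you claim, the almost periodicity of $b_n$ --- that part is easy, and the paper handles it with a single Bohr set $\Lambda=\{n\colon n\alpha\in U\}$ together with the telescoping estimate $\norm{g_{jt}-g}_{L^1(\nu)}\leq j\norm{g_t-g}_{L^1(\nu)}$, avoiding your product group $K^k$ altogether --- it is precisely the uniform-density approximation that you left unproved.

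A second, related flaw: the set arithmetic you rely on, ``a syndetic set intersected with a density-one set is still syndetic,'' is \emph{false} for ordinary density and true only when ``density one'' means lower Banach (uniform) density one. For instance, the even numbers intersected with the complement of $\bigcup_j[2^j,2^j+j]$ (a set of ordinary density one) fail to be syndetic. This is why the paper needs convergence of $a_n$ in \emph{uniform} density: then the good set $\Lambda'$ has lower Banach density one, so some finite $N$ has the property that every interval of length $N$ contains $L$ consecutive elements of $\Lambda'$, where $L$ is the gap of the Bohr set $\Lambda$, forcing $\Lambda\cap\Lambda'$ to meet every such interval. With the quantifiers repaired in this way --- uniform density from~\cite{BHK} in place of the unproven density claim --- your outline becomes the paper's proof.
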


\begin{proof}
Fix $A\subset X$, an integer $k\geq 1$, and $\varepsilon>0$.

Set $g=\E(\one_A\mid Z_1(X))$ and notice that $0\leq g\leq 1$. Recall that the  Kronecker factor $(Z_1(X), \nu,T)$  of $(X, \mu,T)$ is a compact abelian group, endowed with its Haar measure $\nu$, Borel 
$\sigma$-algebra $\CZ_1$, and the transformation $T$ is translation by some element $\alpha\in Z_1$. For simplicity, in this proof we 
write $Z_1$ instead of $Z_1(X)$.  For each $t\in Z_1$, let $g_t$ be the function given by $g_t(x)=g(x+t)$. Then there exists a neighborhood $U$ of $0$ in $Z_1$ such that $\norm{g_t-g}_{L^1(\nu)}<\varepsilon/k^2$ for every $t\in U$. Therefore, for $t\in U$, we have that
$\norm{g_{jt}-g}_{L^1(\nu)}<j\varepsilon/k^2$ for every integer $1\leq j\leq k$ and
$$ 
\int g\cdot g_t\cdot \ldots\cdot g_{kt}\,d\nu>
\int g^{k+1}\,d\nu -
\sum_{j=1}^k \frac{j\varepsilon}{k^2}
\geq\mu(A)^{k+1}-\varepsilon .
$$
Let $\Lambda :=\{ n\in\N\colon n\alpha\in U\}$.
Then for every $n\in\Lambda$, 
$$
\int g\cdot T^ng\cdot\ldots\cdot T^{kn}g\,d\nu
>\mu(A)^{k+1}- \varepsilon .
$$
Furthermore,  $\Lambda$ is a syndetic set, and thus there exists an integer $L>0$ such that every interval of length $L$ in $\N$ contains at least one element of $\Lambda$.

On the other hand, 
by hypothesis, $g=\E(\one_A\mid Z_s(X))$ since $Z_s(X) = Z_1$ for all $s\geq 1$.  
Thus by~\cite[Corollary~4.6]{BHK}, the difference
$$
a_n:=\mu(A\cap T^{-n}A\cap \cdot\ldots\cdot\cap T^{-kn}A)-
\int g\cdot T^ng\cdot \dots\cdot T^{kn}g\,d\nu
$$
converges to $0$ in uniform density, meaning that
$$
\lim_{N\to+\infty}\sup_{M\in\N}\frac{1}{N}\sum_{n=M}^{M+N-1}|a_n| = 0.
$$
 In particular,  the set 
$$
\Lambda'
:=\Bigl\{ n\in\N\colon 
\mu(A\cap T^{-n}A\cap \ldots \cap T^{-kn}A)>
\int g\cdot T^ng\cdot \ldots\cdot T^{kn}g\,d\nu\,-\, \varepsilon \Bigr\}
$$
has lower Banach density one, meaning that 
$$
\lim_{N\to\infty}\inf_{M\in\N} \frac 1N\bigl|\Lambda'\cap [M,M+N)\bigr|=1.
$$
 Thus there 
exists an integer $N>0$ such that every interval of length $N$ in $\N$ contains $L$ consecutive elements of $\Lambda'$. 
By definition of $L$, any interval of length $N$ in $\N$ contains  some $n\in\Lambda'\cap\Lambda$ and this integer $n$ satisfies~\eqref{eq:progk}.
\end{proof}

Now we prove Theorem~\ref{th:bornpol}, that is, the extension of Theorem~\ref{th:bornelin} for polynomial iterates.  As the proof is similar but notationally more 
cumbersome, we only include an outline of the steps.

\begin{proof}[Proof of Theorem~\ref{th:bornpol}]
Assume that  $(X, \mu, T)$ is an ergodic system with 
 $Z_{s}(X)=Z_{1}(X)$ for all $s>1$ and that
 $p_1$, \dots, $p_k$ are integer polynomials satisfying $p_i(0)=0$ for $1\leq i\leq k$. 
In this proof, we write $Z_1,Z_s,\dots$, instead of $Z_1(X),Z_s(X),\dots$.

By~\cite{HK2} and~\cite{L2}, 
there exists an integer $s \geq 1$ such that for all functions $f_0,f_1,\dots,f_k\in L^\infty(\mu)$,   the averages over $[M_i,N_i)$ of 
\begin{multline*}
\int f_0\cdot T^{p_1(n)}f_1\cdot\ldots\cdot T^{p_k(n)}f_k\,d\mu\\
-
\int \E(f_0\mid Z_s)\cdot T^{p_1(n)}\E(f_1\mid Z_s)\cdot 
\cdot\ldots\cdot T^{p_k(n)}\E(f_k\mid Z_s)\,d\mu
\end{multline*}
converge to zero for  all sequences $(M_i)$  and $(N_i)$ of integers such that $N_i-M_i\to+\infty$.
Proceeding as in the proof of the deduction of Corollary~4.5 from Theorem~4.4 of~\cite{BHK}, 
we deduce that 
\begin{multline*}
\int f_0\cdot T^{p_1(n)}f_1\cdot\ldots\cdot T^{p_k(n)}f_k\,d\mu\\
-
\int \E(f_0\mid Z_{s+1})\cdot T^{p_1(n)}\E(f_1\mid Z_{s+1})\cdot\ldots\cdot T^{p_k(n)}\E(f_k\mid Z_{s+1})\,d\mu
\end{multline*}
converges to zero in uniform density.
By hypothesis and Corollary~\ref{cor:spectrum}, we have that $Z_{s+1}=Z_1$. Applying this with $f_0=f_1=\dots=f_k=\one_A$, and writing $g= \E(\one_A\mid Z_1)$, we conclude that
$$
\mu(A\cap T^{-p_1(n)}A\cap \dots\cap T^{-p_k(n)}A)- 
\int g\cdot T^{p_1(n)}g\cdot
\cdot\ldots\cdot T^{p_k(n)} g\,d\mu
$$
converges to zero in uniform density.

We continue as in the proof of Theorem~\ref{th:bornelin}. Let $\nu$ denote the Haar measure of $Z_1$ and let $\alpha$ be the element of $Z_1$ defining its transformation. 
For $t$ and $z\in Z_1$, write $g_t(z)=g(z+t)$; choose an open neighborhood $U$ of $0$ in $Z_1$ such that 
$\norm{g_t-g}_{L^1(\nu)}<\varepsilon/k^2$ for every $t\in U$. 
We now use a standard equidistribution method.  Let 
$H$ be the closed subgroup of $\T^k$ spanned by 
$\bigl(p_1(n)t, \ldots, p_k(n)t\bigr)$ for $n\in\Z$ and $t\in\T$.  We have that 
$H$ is equal to the set of $(t_1, \ldots, t_k)\in\T^k$ such that 
$a_1t_1+\ldots+a_kt_k = 0$ for every choice of $(a_1, \ldots, a_k)\in\Z^k$
such that 
$a_1p_1+\ldots+a_kp_k$ is identically zero.  
By Weyl's Theorem~\cite{We}, the sequence 
$\bigl(p_1(n)\alpha, \ldots, p_k(n)\alpha\bigr)$ is well distributed in 
$H$, meaning that 
for every continuous function $\phi$ on $\T^k$, 
$$
\frac{1}{N} \sum_{n=M}^{N+M-1}\phi\bigl((p_1(n)\alpha, \ldots, p_k(n)\alpha)\bigr)\longrightarrow
\int\phi\,dm_H, \text{ uniformly in M},
$$
where $m_H$ denotes the Haar measure on $H$.  We deduce that for the open set $U$,
$$
(p_1(n)\alpha, \ldots, p_k(n)\alpha)\in U\times\ldots\times U
$$
for a syndetic set of $n$.  We conclude as in the proof of Theorem~\ref{th:bornelin}.
\end{proof}

\subsection{Second application: weighted multiple averages}
\label{subsec:appli2}

The second application is a strengthening of results in~\cite{HK3} and~\cite{C} on the convergence of weighted polynomial multiple averages. Recall that the Kronecker factor of an ergodic system is naturally endowed with a topology, making it a compact abelian group. 

\begin{theorem}
\label{th:weighted}
Let $(X,T)$ be a uniquely ergodic topological dynamical  system with invariant measure $\mu$. 
Assume that $(X,\mu,T)$ satisfies $Z_{s}(X)=Z_{1}(X)$ for all $s>1$, for example a system satisfying one of the properties listed after Corollary~\ref{cor:spectrum},
and that the projection 
$\pi_1$ of $X$ onto its Kronecker factor  is continuous.
Then for any Riemann integrable function $\phi$ on $X$,  any $x\in X$,
any system $(Y,\nu,S)$, any $k\geq 1$, any functions $f_1, \ldots , f_k \in L^\infty(\nu)$, and any integer  polynomials $p_1, \ldots, p_k$, the averages
$$
\frac 1 N\sum_{n=0}^{N-1}
\phi(T^nx)\cdot S^{p_1(n)}f_1 \cdot S^{p_2(n)}f_2 \cdot\ldots\cdot  S^{p_k(n)}f_k
$$
converge in $L^2(\nu)$ as $N\to+\infty$.
\end{theorem}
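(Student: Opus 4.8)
The plan is to peel off the regularity of $\phi$ and the generality of $(Y,\nu,S)$ by a sequence of reductions, until the statement becomes a convergence question for scalar weighted averages $\frac1N\sum_n\phi(T^nx)a_n$ with $(a_n)$ a polynomial nilsequence, and then to settle that question at \emph{every} point $x$ by combining the Wiener--Wintner theorem for nilsequences of~\cite{HK3} with the three standing hypotheses. First I would reduce to $\phi$ continuous. Given $\varepsilon>0$, choose continuous $\phi_-\le\phi\le\phi_+$ with $\int(\phi_+-\phi_-)\,d\mu<\varepsilon$; writing $A_N^\psi=\frac1N\sum_{n=0}^{N-1}\psi(T^nx)\prod_{i=1}^kS^{p_i(n)}f_i$ and using $\norm{\prod_iS^{p_i(n)}f_i}_{L^2(\nu)}\le\prod_i\norm{f_i}_\infty$, one gets
$$
\norm{A_N^{\phi}-A_N^{\phi_\pm}}_{L^2(\nu)}\le\Bigl(\prod_i\norm{f_i}_\infty\Bigr)\frac1N\sum_{n=0}^{N-1}(\phi_+-\phi_-)(T^nx).
$$
By unique ergodicity the right-hand side tends, for \emph{every} $x$, to $\bigl(\prod_i\norm{f_i}_\infty\bigr)\int(\phi_+-\phi_-)\,d\mu<\bigl(\prod_i\norm{f_i}_\infty\bigr)\varepsilon$, so a Cauchy argument lets me assume $\phi$ continuous.

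Next I would trivialize the dependence on $(Y,\nu,S)$. By the polynomial convergence and characteristic factor results of~\cite{HK2} and~\cite{L2}, there is an integer $d$, depending only on $k$ and the degrees of the $p_i$, such that replacing each $f_i$ by $\E(f_i\mid Z_d(Y))$ leaves the limit unchanged. Since $Z_d(Y)$ is an inverse limit of $d$-step nilsystems, and since replacing an $f_i$ by an $L^2(\nu)$-approximant $f_i'$ changes $A_N^\phi$ by at most $\norm{\phi}_\infty\sum_jC\,\norm{f_j-f_j'}_{L^2(\nu)}$ uniformly in $N$ and $x$ (telescope the product and use that $S$ is measure preserving), I may assume that $(Y,\nu,S)$ is a single $d$-step nilsystem $W/\Lambda$ with $S$ a translation by some $b$, and that the $f_i$ are continuous. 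For fixed $y\in Y$, Leibman's theorem~\cite{L2} shows that $n\mapsto\prod_if_i(b^{p_i(n)}y)$ is a polynomial nilsequence of complexity bounded independently of $y$, and the family $\{A_N^\phi\}_N$ is uniformly bounded in $L^\infty(\nu)$.

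It now suffices to prove that, for \emph{every} $x\in X$ and every $y\in Y$, the scalar sequence $\bigl(\frac1N\sum_n\phi(T^nx)\prod_if_i(b^{p_i(n)}y)\bigr)_N$ converges as $N\to\infty$: since these averages are bounded uniformly in $N$ and $y$, convergence for each fixed $y$ yields convergence of $A_N^\phi$ in $L^2(\nu)$ by dominated convergence. In other words, everything reduces to a pointwise (every-$x$) Wiener--Wintner statement for polynomial nilsequences on $(X,T)$: for continuous $\phi$ and every nilsequence $(a_n)$, the average $\frac1N\sum_n\phi(T^nx)a_n$ converges for every $x$. The Wiener--Wintner theorem for nilsequences of~\cite{HK3} furnishes this for \emph{almost every} $x$, with the obstruction to convergence, and the size of the ``higher-order'' part of the average, controlled by the higher Host--Kra uniformity seminorms $\norm{\phi}_{X,s}$ of $(X,\mu,T)$.

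The main obstacle is to upgrade this from almost every $x$ to every $x$, and this is precisely what the three hypotheses are designed to achieve. The hypothesis $Z_s(X)=Z_1(X)$ for all $s>1$ forces each seminorm $\norm{\phi}_{X,s}$ with $s\ge2$ to depend only on the Kronecker data of $\phi$, so that the higher-order part of the average carries no genuine nilpotent structure and contributes nothing; the only surviving term comes from the correlation of $(a_n)$ with the eigenfunctions of $(X,T)$. Because $\pi_1$ is continuous, these eigenfunctions are genuine continuous functions satisfying $\chi(\pi_1(T^nx))=\chi(\pi_1(x))\,\chi(\alpha)^n$ at \emph{every} point, so this Kronecker correlation is governed by exponentially weighted polynomial multiple averages, which converge at every $x$ by the uniform Wiener--Wintner theorem available for systems with continuous eigenfunctions (the almost-periodic, $1$-step case underlying~\cite{HK3} and~\cite{C}). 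Thus the absence of higher nilfactors lets me trade the delicate every-point nilsequence Wiener--Wintner statement for its well-understood Kronecker counterpart, which unique ergodicity together with the continuity of $\pi_1$ makes valid at every point. The hardest technical point is exactly the passage from an almost-everywhere to a pointwise control of the higher-order part, where the continuity of $\phi$ and the seminorm estimates of~\cite{HK3} must be combined with the structural hypothesis $Z_s(X)=Z_1(X)$.
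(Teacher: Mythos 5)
Your overall architecture (reduce to scalar averages $\frac1N\sum_n\phi(T^nx)b_n$ against nilsequences, then feed such weights into the polynomial multiple averages) is the right division of labor, and it is exactly the one the paper implements by citing two results: Chu's theorem~\cite{C} and the weighted theorem of~\cite{HK3}. But your execution has two genuine gaps. First, the reduction of $(Y,\nu,S)$ to a nilsystem by replacing each $f_i$ with $\E(f_i\mid Z_d(Y))$ is not justified by~\cite{HK2} and~\cite{L2}: those are characteristic factor results for \emph{unweighted} Ces\`aro averages, and they do not assert that $\frac1N\sum_n a_n\prod_iS^{p_i(n)}f_i\to0$ in $L^2(\nu)$ for an arbitrary bounded weight $(a_n)$ when some $\E(f_j\mid Z_d(Y))=0$. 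Proving such a weighted seminorm estimate (via van der Corput/PET induction) is precisely the technical core of Chu's theorem, which the paper simply cites; your steps reconstruct its outer shell while omitting that core.

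Second, and this is the fatal gap: your step upgrading the nilsequence Wiener--Wintner statement from almost every $x$ to every $x$. The mechanism you propose cannot work as described. The seminorms $\norm{\phi}_{X,s}$ are $L^2$-type quantities; the vanishing of $\norm{\phi-\E(\phi\mid Z_1)\circ\pi_1}_{X,s}$ controls the averages only at almost every $x$, and the function $\phi-\E(\phi\mid Z_1)\circ\pi_1$ is in general neither continuous nor Riemann integrable (conditional expectation does not preserve continuity), so no unique-ergodicity argument can be applied to it pointwise --- an ``everywhere'' claim about a function defined only up to null sets is vacuous. What you are missing is that the relevant result of~\cite{HK3} (Theorem 2.19 together with Proposition 7.1) is already an \emph{everywhere} statement: its only hypothesis beyond unique ergodicity is that the factor map $\pi_s\colon X\to Z_s(X)$ be continuous, and its proof proceeds by a joining/orbit-closure argument in the product of $X$ with the nilsystem generating the nilsequence, not by decomposing $\phi$. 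The entire role of the standing hypothesis, and the one-line observation your proposal never makes, is this: since $Z_s(X)=Z_1(X)$ for all $s>1$, the map $\pi_s$ coincides with $\pi_1$ and is therefore continuous, so the everywhere theorem of~\cite{HK3} applies at every order $s$; Chu's theorem then converts this into the desired $L^2(\nu)$ convergence.
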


In particular, this result applies for substitution dynamical systems~\cite{Q}, and more generally for many linearly recurrent systems and systems of finite topological rank~\cite{BDM}. In~\cite{HK3}, it was proved in the case of linear polynomials for particular sequences, 
including, for example the Thue-Morse sequence.

\begin{proof}
Assume that $(X,T)$ is a uniquely ergodic system satisfying the hypotheses of Theorem~\ref{th:weighted}.
Since for every $s\geq 1$ we have that $Z_s(X)=Z_1(X)$, by hypothesis the projection $\pi_s\colon X\to Z_s(X)$ is continuous.
Theorem~\ref{th:weighted} 
follows immediately by combining two results 
in the literature. The first is a weighted ergodic average for 
nilsequences:
\begin{theorem*}
[\cite{HK3}, Theorem 2.19 and Proposition 7.1]
Let $(X,T)$ be a uniquely ergodic system with invariant measure $\mu$ 
and let $m\geq 1$ be an integer. 
Assume that the factor map $\pi_{s}\colon X\to Z_{s}(X)$ is continuous.
Then for any Riemann integrable function $\phi$ on $X$,  $x\in X$, and
 $s$-step nilsequence $\ub=(b_n\colon n\in\Z)$, 
 the limit
$$
\lim_{N\to+\infty}\frac 1{N}\sum_{n=0}^{N-1} \phi(T^nx)b_n
$$
exists.
\end{theorem*}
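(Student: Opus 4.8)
The plan is to recognize the average as an ergodic average of the product function $\phi\otimes F$ along a diagonal orbit in a product system, and then to identify its limit using the structure of nilfactors. After some reductions, I would carry out a joining argument in which continuity of $\pi_s$ and unique ergodicity pin the limit down uniformly in $x$.

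First I would make two reductions. Writing a general $s$-step nilsequence as a uniform limit of \emph{basic} ones and using $\norm{\phi}_\infty<\infty$, it suffices to treat $b_n=F(a^n\cdot z)$ with $F$ continuous on an $s$-step nilmanifold $Y=H/\Lambda$, $a\in H$, and $z\in Y$. By Leibman's theorem on orbit closures in nilmanifolds, the closure of $\{a^n\cdot z\colon n\in\Z\}$ is a sub-nilmanifold on which the translation $R_a\colon y\mapsto a\cdot y$ is minimal, hence uniquely ergodic; so I may assume $(Y,R_a)$ is minimal with Haar measure $m_Y$. Next I would reduce a Riemann integrable $\phi$ to a continuous one: for continuous $\psi$ one has $|\frac1N\sum_{n<N}(\phi-\psi)(T^nx)\,b_n|\le\norm{F}_\infty\cdot\frac1N\sum_{n<N}|\phi-\psi|(T^nx)$, and unique ergodicity of $(X,T)$ forces the right-hand average to converge to $\int|\phi-\psi|\,d\mu$ for \emph{every} $x$ (the standard extension of unique ergodicity to Riemann integrable functions); choosing $\psi$ with $\int|\phi-\psi|\,d\mu$ small reduces everything to $\phi\in C(X)$.

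For continuous $\phi$, I set $\nu_N=\frac1N\sum_{n=0}^{N-1}\delta_{(T^nx,\,a^n\cdot z)}$ on $X\times Y$, so the average in question equals $\int\phi\otimes F\,d\nu_N$. Any weak-$*$ limit point $\lambda$ of $(\nu_N)$ is invariant under $T\times R_a$ and, by unique ergodicity of $(X,T)$ and of $(Y,R_a)$, projects to $\mu$ on $X$ and to $m_Y$ on $Y$; thus $\lambda$ is a joining. Writing $W=Z_s(X)$ and $R_\beta$ for its translation, continuity of $\pi_s$ lets me push $\nu_N$ forward to $\frac1N\sum_{n<N}\delta_{(R_\beta^n\pi_s(x),\,a^n\cdot z)}$ on $W\times Y$. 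Since $W\times Y$ is an inverse limit of $s$-step nilsystems, Leibman's equidistribution theorem (applied at each finite stage and passed to the limit) shows these averages converge to the unique invariant measure $\kappa$ carried by the orbit closure of $(\pi_s(x),z)$; in particular every limit point $\lambda$ has the same image $\kappa$ on $W\times Y$.

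It remains to evaluate $\int\phi\otimes F\,d\lambda=\int_X\phi\cdot\E_\lambda(F\mid X)\,d\mu$. The key step — and the main obstacle — is to show that $\E_\lambda(F\mid X)$ factors through $Z_s(X)$, i.e.\ that $Z_s(X)$ is characteristic for joinings with $s$-step nilsystems. I would deduce this from the functoriality of the factors $Z_s$ in~\cite{HK1}: the second coordinate exhibits $(Y,m_Y)$ as a factor of the joining $(X\times Y,\lambda)$, and since $Y$ is an $s$-step nilsystem it is its own maximal $s$-step nilfactor, so $F$ is measurable with respect to $Z_s(X\times Y,\lambda)$; applying the same functoriality to the first-coordinate factor map then makes $\E_\lambda(F\mid X)$ measurable with respect to $Z_s(X)$. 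Consequently $\E_\lambda(F\mid X)=h\circ\pi_s$ with $h=\E_\kappa(F\mid W)$ \emph{determined by $\kappa$ alone}, whence $\int\phi\otimes F\,d\lambda=\int_X\phi\cdot(h\circ\pi_s)\,d\mu$ takes the same value for every limit point $\lambda$. Therefore $\int\phi\otimes F\,d\nu_N$ converges, which is the desired assertion. The delicate point throughout is this characteristic-factor lemma: one must ensure that the only part of the weight $\phi(T^nx)$ resonating with the nilsequence lives on $Z_s(X)$, and that continuity of $\pi_s$ turns this resonance into the \emph{same} $\kappa$-governed quantity for every $x$, rather than merely for $\mu$-almost every $x$.
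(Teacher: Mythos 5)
First, a point of comparison: this statement is not proven in the paper at all. It is quoted, with attribution, from \cite{HK3} (Theorem 2.19 together with Proposition 7.1) and used as a black box in the proof of Theorem~\ref{th:weighted}; the proof in \cite{HK3} runs through the machinery of uniformity seminorms on $\ell^\infty$ developed there, so your orbit-measure/joining argument is a genuinely different route. Most of your reductions are sound: passing to basic nilsequences, replacing $Y$ by the orbit closure of $z$ (minimal and uniquely ergodic, by Leibman \cite{L2}), the Riemann-integrable-to-continuous step via unique ergodicity, the identification of weak-$*$ limit points of $(\nu_N)$ as joinings, and the uniqueness of the projected limit $\kappa$ on $Z_s(X)\times Y$ obtained by applying Leibman's pointwise theorem at each finite stage of the inverse limit.

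The genuine gap is ergodicity. The limit points $\lambda$ of $(\nu_N)$ have no reason to be ergodic, yet your pivotal step applies the theory of the factors $Z_s$ from \cite{HK1} --- the very definition of $Z_s(X\times Y,\lambda)$, its functoriality under the two coordinate projections, and the maximality property that every $s$-step nilfactor is contained in $Z_s$ --- to the system $(X\times Y,\lambda,T\times R_a)$. That theory is developed for ergodic systems only; for non-ergodic $\lambda$ the object $Z_s(X\times Y,\lambda)$ you manipulate is not defined. The repair exists precisely because of unique ergodicity and must be spelled out: writing $\lambda=\int\lambda_\omega\,dP(\omega)$ for the ergodic decomposition, each component has $T$-invariant first marginal and $R_a$-invariant second marginal, which by unique ergodicity of $(X,T)$ and of $(Y,R_a)$ must equal $\mu$ and $m_Y$; hence each $\lambda_\omega$ is again a joining, the ergodic-case lemma applies to it, and since all components share the marginal $\mu$ one gets $\E_\lambda(F\mid\CB_X)=\int\E_{\lambda_\omega}(F\mid\CB_X)\,dP(\omega)$, which is then $Z_s(X)$-measurable and determined by $\kappa=\int\kappa_\omega\,dP(\omega)$.

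A further caution concerns the key lemma itself. Even for ergodic $\lambda$, it rests on two substantial inputs that your one-sentence appeal to functoriality hides: (a) an ergodic $s$-step nilsystem equals its own $Z_s$ (this is what puts $\CB_Y$ inside $\CZ_s(X\times Y,\lambda)$), and (b) for a factor map of ergodic systems, $\E(f\mid Z_s(\text{extension}))=\E(f\mid Z_s(\text{factor}))$ for every $f$ lifted from the factor. Both can be extracted from \cite{HK1}, but be aware of the weight this step carries: your lemma, combined with Corollary~\ref{cor:spectrum} and applied a second time to the restriction of $\lambda$ to $Z_1(X)\times Y$, formally yields the Conjecture stated in Section~\ref{subsec:questions} of this very paper, which its authors pose as open. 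So this is the one place in your argument that cannot be waved through; it needs either a complete and careful proof of the conditional-independence statement for ergodic joinings, or a precise reference. As written, with that step unsubstantiated and the ergodicity issue unaddressed, the proof is incomplete, though the overall strategy is viable.
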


The second is a weighted ergodic theorem for multiple convergence along integer
polynomials proved by Chu that generalized the linear case in~\cite{HK3}:
\begin{theorem*}[\cite{C}, Theorem 1.3]
For any $k, d \in \N$,
there exists an integer $s \geq 1$ with the following property: for any bounded sequence
$\ua = (a_n \colon n \in \Z)$, if the averages
$$
\frac 1 N\sum_{n=0}^{N-1}a_nb_n
$$
converge as $N \to+\infty$  for every $s$-step nilsequence $\ub = (b_n \colon n \in \Z)$, then for every
system $(Y,\nu,S)$, all $f_1, \ldots , f_k \in L^\infty(\nu)$, and all integer polynomials $p_1, \ldots, p_k$ of
degree $\leq d$, the averages
$$
\frac 1 N\sum_{n=0}^{N-1}
a_n\,S^{p_1(n)}f_1 \cdot S^{p_2(n)}f_2 \cdot\ldots\cdot  S^{p_k(n)}f_k
$$
converge in $L^2(\nu)$.
\end{theorem*}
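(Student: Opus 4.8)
The plan is to prove Chu's theorem by the familiar three-step reduction for weighted polynomial multiple averages, the new point being that \emph{every} estimate must be made uniform in the bounded weight $\ua$. Write
\[
A_N=\frac1N\sum_{n=0}^{N-1}a_n\, S^{p_1(n)}f_1\cdots S^{p_k(n)}f_k\in L^2(\nu),
\]
and record the trivial bound $\norm{A_N}_{L^2(\nu)}\le \norm{f_j}_{L^2(\nu)}\prod_{i\ne j}\norm{f_i}_{L^\infty(\nu)}$, valid for each $j$ since $\sup_n|a_n|\le1$ and $S$ preserves $\nu$ (triangle inequality followed by H\"older).

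First I would establish a \emph{weighted characteristic factor theorem}: there is an integer $s_0=s_0(k,d)$ such that if $\E(f_j\mid Z_{s_0}(Y))=0$ for some $j$, then $\norm{A_N}_{L^2(\nu)}\to0$, uniformly over all weights with $\sup_n|a_n|\le1$. This is obtained by running the van der Corput inequality together with the PET induction of Bergelson and Leibman (as in \cite{HK2} and \cite{L2}) while carrying the weight: each differencing step replaces $a_n$ by a bounded product such as $a_{n+h}\overline{a_n}$, so after the finitely many steps dictated by $(k,d)$ the weight is majorized by $1$ and disappears from the final estimate, leaving control by a Host--Kra uniformity seminorm of one of the $f_i$. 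By multilinearity and a telescoping argument, this shows that in the $L^2$-limit $A_N$ depends only on the projections $\E(f_i\mid Z_{s_0}(Y))$, so I may replace $(Y,\nu,S)$ by its factor $Z_{s_0}(Y)$.

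Next, using that $Z_{s_0}(Y)$ is an inverse limit of $s_0$-step nilsystems together with the trivial bound above, I would approximate each $f_i$ in $L^2$ first by a bounded function measurable with respect to a single $s_0$-step nilfactor and then by a continuous function on that nilmanifold; the trivial bound makes both approximations uniform in $N$, so it suffices to treat the case where $(Y,\nu,S)=(W/\Lambda,\nu,S)$ is a single $s_0$-step nilsystem with $S$ translation by some $b\in W$ and each $f_i$ continuous. There $A_N(z)=\frac1N\sum_n a_n c_n(z)$ with $c_n(z)=\prod_i f_i\bigl(b^{p_i(n)}z\bigr)=F\bigl(b^{p_1(n)}z,\dots,b^{p_k(n)}z\bigr)$, where $F$ is continuous on $(W/\Lambda)^k$. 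The sequence $n\mapsto(b^{p_1(n)},\dots,b^{p_k(n)})$ is a polynomial sequence of degree $\le d$ in $W^k$, so by the polynomial-to-linear lifting for nilmanifolds (Leibman) each $c_n(z)$, for fixed $z$, is a genuine nilsequence of step $\le s_1$, with $s_1=s_1(s_0,d)=s_1(k,d)$. I then set $s:=\max(s_0,s_1)$, which is the $s$ produced by the theorem.

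Finally I would conclude: by hypothesis $\frac1N\sum_n a_nb_n$ converges for every $s$-step nilsequence $\ub$, and since every $s_1$-step nilsequence is in particular $s$-step, $A_N(z)$ converges for every fixed $z$; as $|A_N(z)|\le\prod_i\norm{f_i}_{L^\infty(\nu)}$ and $\nu$ is a probability measure, dominated convergence upgrades this pointwise convergence to convergence in $L^2(\nu)$. The main obstacle is the first step: making the characteristic-factor reduction uniform over all bounded weights, i.e.\ verifying that the weight survives the van der Corput/PET differencing only as bounded factors and drops out of the final seminorm estimate, and that this estimate holds with respect to each index $i$ so that vanishing of a single projection already forces $A_N\to0$. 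The remaining steps are soft once an explicit bound $s_1(s_0,d)$ is extracted from the lifting construction.
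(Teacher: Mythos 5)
This statement is quoted from Chu \cite{C} and used in the paper as a black box in the proof of Theorem~\ref{th:weighted}; the paper contains no proof of it, so the comparison is with Chu's published argument. Your sketch reconstructs exactly that argument's architecture: a weighted characteristic-factor reduction via van der Corput and PET, approximation of $Z_{s_0}(Y)$-measurable functions by continuous functions on a single nilsystem, Leibman's lifting of polynomial orbits so that $n\mapsto\prod_i f_i(b^{p_i(n)}z)$ is a bounded nilsequence of step bounded in terms of $(k,d)$ for every $z$, and dominated convergence to upgrade pointwise convergence of $A_N(z)$ to convergence in $L^2(\nu)$. The final three steps are carried out correctly, including the uniform trivial bound that makes the approximations legitimate.

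Two places in your step 1 deserve flagging. First, the weight does not simply ``drop out'' of the PET induction: after the van der Corput differencings, the inner limits in $n$ need not exist for an arbitrary bounded weight, so one is forced to take $\limsup$ of absolute values inside the $\vec h$-averages, ending with a quantity like $\limsup_H H^{-k}\sum_{\vec h}\bigl|\int \Delta_{\vec h}f_j\,d\mu\bigr|$ rather than the same average without absolute values. Removing the absolute values costs a Cauchy--Schwarz and a passage to $f_j\otimes \bar f_j$ on the product system, which raises the characteristic factor by one level of the tower ($Z_{s_0+1}$ in place of $Z_{s_0}$) --- precisely the maneuver this paper itself borrows from \cite{BHK} (Corollary 4.5 versus Theorem 4.4) in the proof of Theorem~\ref{th:bornpol}. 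Since the theorem only asserts the existence of \emph{some} $s$, this costs nothing, but it is the one genuinely delicate computation and your phrase ``disappears from the final estimate'' glosses over it. Second, the theorem quantifies over \emph{every} system $(Y,\nu,S)$, while the structure theory you invoke ($Z_{s_0}(Y)$ as an inverse limit of $s_0$-step nilsystems) requires ergodicity; you need an ergodic decomposition step. It is harmless here because the hypothesis on $\ua$ and the uniform bound $\prod_i\norm{f_i}_{L^\infty(\nu)}$ are independent of the component, so the Cauchy property of $(A_N)$ holds on almost every ergodic fiber and integrates by dominated convergence --- but it must be said. A minor housekeeping point: before running PET you should normalize degenerate families (equal polynomials merged, constant $p_i$ absorbed into a fixed bounded factor), as the claim that vanishing of any single projection $\E(f_i\mid Z_{s_0}(Y))$ kills the average presupposes such a reduction.
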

\end{proof}

\section{Complexity of topological nilsystems}

\subsection{Complexity and commutator dimension}
\label{subsec:main}

Before stating the theorem, we define:
\begin{definition}
\label{def:com_dim}
If $(X=G/\Gamma, T)$ is an $s$-step nilsystem for some $s\geq 1$ and if $\tau\in G$ 
is the element defining $T$, the \emph{total commutator dimension} $p$ of $X$ 
is defined to be 
 \begin{equation}
\label{eq:defp}
p=\sum_{\ell=1}^{s-1}\dim\bigl(\range\bigl(\ad_\tau-\id)^\ell\bigr)\bigr).
\end{equation}
\end{definition}

Our main result is:
\begin{theorem}
\label{th:main}
Let $(X=G/\Gamma, T)$ be a minimal $s$-step  nilsystem for some $s\geq 2$ and
 assume that $(X,T)$ is not an $(s-1)$-step nilsystem.
Let $d_X$ be a distance on $X$ defining its topology.  
Then
  for every $\varepsilon>0$ that is sufficiently small, there exist positive constants $C(\varepsilon)$ and $C'(\varepsilon)$ such that the  topological complexity $\CS(\varepsilon,n)$ of $(X,T)$ for the distance $d_X$ satisfies
\begin{equation}
\label{eq:main}
C(\varepsilon)n^p\leq\CS(\varepsilon,n)\leq C'(\varepsilon)n^p\text{ for every }n\geq 1,
\end{equation}
where $p$ is the total commutator dimension of $X$. Moreover, $p\geq s-1$ and 
$C(\varepsilon)\to+\infty$ when $\varepsilon\to 0$.  

Furthermore, for a suitably chosen distance on $X$, one can take
$C(\varepsilon)=C\varepsilon^{-d}$ and $C'(\varepsilon)=C'\varepsilon^{-d}$, where $C$ and $C'$ are positive constants and $d$ is the dimension of $X$.
\end{theorem}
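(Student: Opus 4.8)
The plan is to reduce the computation of the complexity to a purely linear--algebraic volume estimate, exploiting the fact that in a nilsystem the divergence of two nearby orbits is governed by a single linear cocycle that does not depend on the base point. After the reductions of Section~\ref{subsec:nilsystems} I may assume that $G$ is generated by $G_0$ and $\tau$ and that $G_0$ is simply connected, so that $\exp\colon\mathfrak{g}\to G_0$ is a diffeomorphism and $d:=\dim X=\dim\mathfrak{g}$. Writing $N:=\ad_\tau-\id$ for the linear operator on $\mathfrak{g}$, the relation $[\tau,G_j]\subset G_{j+1}$ shows that $N$ carries the $j$-th term of the lower central filtration into the $(j+1)$-th, so $N^s=0$ and $(\ad_\tau)^k=(\id+N)^k=\sum_{\ell=0}^{s-1}\binom{k}{\ell}N^\ell$ is a polynomial in $k$ of degree $\le s-1$. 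The key identity is that for $y=\exp(\xi)\cdot x$ one has, exactly, $T^ky=\exp\bigl((\ad_\tau)^k\xi\bigr)\cdot T^kx$; thus the left--displacement between the two orbits at time $k$ is $\exp((\ad_\tau)^k\xi)$, independent of $x$. I would fix the ``suitable distance'' to be the one induced on $X$ by a right--invariant Riemannian metric on $G$; by the uniform discreteness (over $g$) of the lattices $g\Gamma g\inv$ near $1_G$, a consequence of compactness of $X$, there is an injectivity radius $\rho>0$ with $d_X(\exp(\eta)\cdot p,p)\asymp\norm{\eta}$ for all $p\in X$ and all $\eta\in\mathfrak{g}$ with $\norm{\eta}<\rho$, the constants being independent of $p$.

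With this in hand, introduce the seminorm $\norm{\xi}_n:=\max_{0\le k<n}\norm{(\ad_\tau)^k\xi}$ on $\mathfrak{g}$. The first step is to show that $\CS(\varepsilon,n)$ and its separated--set counterpart $s(\varepsilon,n)$ (for which I use the separated/spanning duality recalled in Section~\ref{subsec:itrotop}) are both comparable, with constants independent of $n$ and of small $\varepsilon$, to the number of $\norm{\cdot}_n$--balls of radius $\varepsilon$ needed to cover, respectively pack, a fixed fundamental region of $\mathfrak{g}$. The only subtlety is the passage between the genuine dynamical distance $d_n(x,y)=\max_{k<n}d_X(T^kx,T^ky)$ on $X$, which measures displacements modulo the lattice, and the ``unreduced'' seminorm $\norm{\cdot}_n$, since for large $k$ the vector $(\ad_\tau)^k\xi$ can be enormous even when $\xi$ is small. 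This is resolved by the one--step Lipschitz bound $\norm{(\ad_\tau)^{k+1}\xi}\le L\norm{(\ad_\tau)^k\xi}$ with $L:=\norm{\ad_\tau}$: a short induction (a ``first--exceedance'' argument) shows that if $\norm{\xi}<\varepsilon$, if every reduced displacement is $<\varepsilon$ for $k<n$, and if $\varepsilon<\rho/L$, then in fact $\norm{(\ad_\tau)^k\xi}<\varepsilon$ for all $k<n$ with no wrap--around; the same bound guarantees that two points whose orbits separate do so, the first time, at a scale in $[\varepsilon,L\varepsilon]\subset(0,\rho)$, so that packings in $\mathfrak{g}$ yield honest separated sets in $X$.

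The heart of the matter is then the two--sided estimate $\mathrm{vol}\{\xi:\norm{\xi}_n\le\varepsilon\}\asymp\varepsilon^{d}\,n^{-p}$, with constants depending only on $\ad_\tau$. I would prove it by choosing a Jordan basis for the nilpotent operator $N$: on a single block of size $m$, with basis $v_1,\dots,v_m$ satisfying $Nv_j=v_{j-1}$, the coefficient of $v_i$ in $(\ad_\tau)^k\xi$ is $\sum_{j\ge i}c_j\binom{k}{j-i}$, and since the dominant weight $\binom{k}{j-i}$ over $i\le j$, $k<n$ is $\binom{n-1}{j-1}\asymp n^{j-1}$, the region $\{\norm{\xi}_n\le\varepsilon\}$ is comparable to the box $\prod_j\{|c_j|\le\varepsilon/n^{j-1}\}$, of volume $\asymp\varepsilon^{m}n^{-m(m-1)/2}$. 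Containment of the box in the region is immediate from $\binom{k}{j-i}\le\binom{n}{j-i}\lesssim n^{j-1}$; the reverse containment, the only genuinely computational point, follows by evaluating at a few well--chosen values of $k$ and inverting the resulting triangular Vandermonde--type system. Multiplying over blocks gives exponent $\sum_b m_b=d$ for $\varepsilon$ and $\sum_b m_b(m_b-1)/2=\sum_{\ell\ge1}\dim\range(N^\ell)=p$ for $n^{-1}$, matching Definition~\ref{def:com_dim}. Dividing the Haar measure of $X$ by this volume yields $\CS(\varepsilon,n)\asymp\varepsilon^{-d}n^{p}$ for the chosen metric, which is the ``Furthermore'' statement; the inequalities~\eqref{eq:main} for an arbitrary distance, together with $C(\varepsilon)\to+\infty$, then follow from the metric--comparison inequality stated after the definition of complexity.

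Two points require extra care, and are where I expect the real work to lie. The first, and the main obstacle, is making the comparison between the abstract covering/packing numbers for $\norm{\cdot}_n$ and the dynamical quantity $\CS(\varepsilon,n)$ uniform in $n$ and $\varepsilon$ simultaneously: one must control the Jacobian of $\exp$, the distortion of the metric across a fundamental domain, and the finitely many components of $G$, all with constants independent of $n$, and combine these with the first--exceedance argument above. The second is the assertion $p\ge s-1$: since $p=\sum_{\ell\ge1}\dim\range(N^\ell)$, this amounts to $N^{s-1}\neq0$, that is, that the nilpotency index of $\ad_\tau-\id$ is exactly $s$. This is where the hypothesis that $(X,T)$ is minimal and genuinely $s$--step (not $(s-1)$--step) must be used: minimality forces the leading degree--raising part of $N$, essentially bracketing by the horizontal part of $\tau$, to propagate iterated commutators down to the nontrivial top term $\mathfrak{g}_s=\mathrm{Lie}(G_s)$, so that $N^{s-1}\neq0$. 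I would isolate this as a separate lemma, proved by passing to the graded Lie algebra and invoking the ergodicity of the induced rotation on the maximal torus.
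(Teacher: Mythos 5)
Your skeleton is the same as the paper's: identify the Bowen ball around $x_0$ with a translate of $W_{\varepsilon,n}=\{g\in G_0\colon d_G(\tau^kg\tau^{-k},1_G)<\varepsilon,\ 0\le k<n\}$ (the ``no wrap-around'' step), estimate $\lambda(W_{\varepsilon,n})\asymp\varepsilon^dn^{-p}$ by a Jordan-block computation (this is exactly Proposition~\ref{prop:lineaire}), and convert covers and packings of a fundamental domain into spanning and separated sets, finishing for a general metric via the comparison inequality. Within this skeleton, your first-exceedance induction is a genuine simplification of the step where the paper works hardest: the paper proves no wrap-around (Lemma~\ref{lem:gammak}, used through Corollary~\ref{cor:summarize}) by induction on the class $s$, using polynomial sequences and iterated difference operators, whereas your induction on the time $k$ needs only the one-step bound $\norm{\ad_\tau^{k+1}\xi}\le L\norm{\ad_\tau^k\xi}$ and the uniform discreteness of the stabilizer lattices $h\Gamma h\inv$ for $h$ in the closure of the fundamental domain (a compactness argument in the spirit of~\eqref{eq:dggprime}). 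That induction does close: if the unreduced displacement at time $k$ is $<\varepsilon$ and the reduced displacement at time $k+1$ is $<\varepsilon$, the lattice element relating them has size $<(1+L)\varepsilon$, hence is trivial for small $\varepsilon$, so the unreduced displacement at time $k+1$ is again $<\varepsilon$. So the main body of your plan is sound.

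The genuine gap is your treatment of $p\ge s-1$. First, the reduction is not an equivalence: $p=\sum_{\ell\ge1}\dim\range\bigl((\ad_\tau-\id)^\ell\bigr)\ge s-1$ is \emph{implied by} $(\ad_\tau-\id)^{s-1}\neq0$, but does not ``amount to'' it. More seriously, the stronger claim you propose to prove is false, so no argument via the graded algebra and ergodicity of the base rotation can establish it. Minimality only forces the horizontal part of $\tau$ to avoid every proper rational affine subspace of $\CG/\CG_2$; the locus $\{v\colon [v,[v,\dots,[v,\,\cdot\,]\dots]]=0\ (s-1\text{ brackets})\}$ is a rationally defined cone of higher degree, and such cones can contain totally irrational \emph{algebraic} points. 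Concretely, for $s=3$: in the free $3$-step nilpotent Lie algebra over $\mathbb{Q}$ on generators $e_1,\dots,e_4$, set $\alpha=2^{1/5}$ and $t=\alpha e_1+\alpha^2e_2+\alpha^3e_3+\alpha^4e_4$, and let $J$ be the smallest $\mathbb{Q}$-subspace of the $20$-dimensional top layer whose complexification contains $[t_\sigma,[t_\sigma,x]]$ for every $x$ and every Galois conjugate $t_\sigma$ of $t$; each conjugate contributes at most $3$ dimensions and there are $5$ conjugates, so $\dim J\le15<20$, and the quotient is a rational, genuinely $3$-step algebra in which $[t,[t,\,\cdot\,]]\equiv0$. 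Taking a Malcev lattice and $\tau=\exp(t)$, the system is minimal (Green's criterion, since $1,\alpha,\dots,\alpha^4$ are linearly independent over $\mathbb{Q}$) and, after making the action faithful, is not a $2$-step nilsystem; yet $(\ad_\tau-\id)^2=[t,[t,\,\cdot\,]]=0$. So the nilpotency index of $\ad_\tau-\id$ can be strictly less than $s$.

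For such systems $p\ge s-1$ is still true, but all $s-1$ dimensions must come from the \emph{first} power of $\ad_\tau-\id$, evaluated along the lower central filtration. That is what the paper does: Lemma~\ref{lem:tau} shows $[\tau,G_\ell]\not\subset G_{\ell+2}$ for every $1\le\ell\le s-1$ (this group-theoretic statement is where minimality, connectedness, and the character argument enter), and Lemma~\ref{lem:ps} differentiates it into the strictly decreasing chain $(\ad_\tau-\id)\CG_1\supsetneq(\ad_\tau-\id)\CG_2\supsetneq\dots\supsetneq(\ad_\tau-\id)\CG_s=\{0\}$, whence $\dim\range(\ad_\tau-\id)\ge s-1$ and a fortiori $p\ge s-1$. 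You should replace your final lemma by an argument of this type; nothing else in your proof uses $(\ad_\tau-\id)^{s-1}\neq0$.
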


The result can be translated into the language of complexity of covers studied in~\cite{BHM}.
 We start by reviewing the definition. Let $\CU$ be an open cover
of $X$, and for every integer $n\in\N$, write
$$
\CU_0^{n-1}:=\bigvee_{j=0}^{n-1} T^{-j}\CU.
$$
 Define $\MN(\CU,n)$ to be the minimal cardinality of a subcover of $\CU_0^{n-1}$; the \emph{complexity function} of $\CU$ is the map $n\mapsto \MN(\CU,n)$.
 We have that $\MS(\varepsilon,n)\leq c(\varepsilon)$ for every $\varepsilon>0$ is 
equivalent to $\MN(\CU,n)\leq C'(\CU)$ for every open cover $\CU$ of $X$, and these conditions 
are equivalent to the system being a rotation.
The upper bound in~\eqref{eq:main} can be rephrased as saying that for every open cover $\CU$ of $X$, there exists a constant $C'(\CU)$ with $\MN(\CU,n)\leq C'(\CU)n^p$ for every $n\in\N$. The lower bound means that there exists an open cover $\CU$ and a constant $C(\CU)$ such that $\MN(\CU,n)\geq C(\CU)n^p$ for every $n\in\N$.

\begin{corollary}
\label{cor:main}
Let $s\geq 1$ and let
 $(X=G/\Gamma, T)$ be a  minimal nilsystem  that is not an  $s$-step nilsystem. Then
$$
\liminf_{n\to+\infty}\frac 1{n^s}\CS(\varepsilon,n)\to +\infty\text{ as }\varepsilon\to 0.
$$
\end{corollary}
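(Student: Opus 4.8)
The plan is to derive Corollary~\ref{cor:main} directly from the lower bound in Theorem~\ref{th:main}, together with the inequality $p\geq s-1$ recorded there and the divergence $C(\varepsilon)\to+\infty$ as $\varepsilon\to 0$.

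First I would pin down the correct step of $X$. Since $(X,T)$ is a minimal nilsystem, it is an $r$-step nilsystem for some integer $r$; let $r$ be the least such integer. The hypothesis that $(X,T)$ is not an $s$-step nilsystem forces $r\geq s+1$, and in particular $r\geq 2$ (as $s\geq 1$). By minimality of $r$, the system $(X,T)$ is not an $(r-1)$-step nilsystem, so the hypotheses of Theorem~\ref{th:main} are met with $r$ in place of $s$.

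Next I would apply Theorem~\ref{th:main} with this $r$. It provides a total commutator dimension $p$ satisfying $p\geq r-1\geq s$, and, for every sufficiently small $\varepsilon>0$, a positive constant $C(\varepsilon)$ with $\CS(\varepsilon,n)\geq C(\varepsilon)n^{p}$ for all $n\geq 1$ and $C(\varepsilon)\to+\infty$ as $\varepsilon\to 0$. Since $p\geq s$, we have $n^{p}\geq n^{s}$ for every $n\geq 1$, whence
$$
\frac{1}{n^{s}}\CS(\varepsilon,n)\geq C(\varepsilon)\,n^{p-s}\geq C(\varepsilon)\qquad (n\geq 1).
$$
Taking the liminf over $n$ gives $\liminf_{n\to+\infty}n^{-s}\CS(\varepsilon,n)\geq C(\varepsilon)$, and letting $\varepsilon\to 0$ yields the claim.

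This argument is essentially immediate once Theorem~\ref{th:main} is in hand; the only point requiring care---and the only plausible obstacle---is the first step, namely identifying the exact step $r$ of $X$ and confirming that $r\geq s+1$. One must be sure that the phrase ``not an $s$-step nilsystem'' is interpreted via the minimal step among the nilmanifold representations of $(X,T)$, so that choosing $r$ minimal legitimately places us in the non-$(r-1)$-step situation required by the theorem and guarantees $p\geq r-1\geq s$.
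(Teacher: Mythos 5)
Your proposal is correct and is exactly the argument the paper intends: the corollary is stated as an immediate consequence of Theorem~\ref{th:main}, obtained by passing to the minimal step $r\geq s+1$ of $(X,T)$ (so that the theorem's hypothesis ``not an $(r-1)$-step nilsystem'' holds), and then using $p\geq r-1\geq s$ together with $C(\varepsilon)\to+\infty$ to get $\liminf_n n^{-s}\CS(\varepsilon,n)\geq C(\varepsilon)$. Your care about interpreting ``not an $s$-step nilsystem'' via the minimal step over all presentations is precisely the right (and only) point needing attention.
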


Applications of these results are given in Section~\ref{sec:top_app}.

\subsection{Conventions and notation}
In the sequel, $s\geq 2$ is an integer and $(X=G/\Gamma,T)$ is a minimal $s$-step nilsystem that is not an $(s-1)$-step nilsystem. We let $\tau$ denote the element of $G$ defining the transformation $T$.

As explained in Section~\ref{subsec:nilsystems}, the assumption of minimality allows us to   assume that $G$ is  spanned by the connected component $G_0$ of the unit element $1_G$ and $\tau$.
We make further assumptions on the choice of the presentation $G/\Gamma$ of 
the nilsystem $X$ and  then prove Theorem~\ref{th:main} 
under these additional assumptions. This clearly implies the result in the general case.

Let $\mu$ denote the Haar measure of $X$ and let $\lambda$ denote 
the Haar measure of $G$.  We normalize 
$\lambda$ such that the measure of any (Borel) fundamental domain of the projection $\pi\colon G\to X$ is equal to $1$.

Here, and  again in Section~\ref{subsec:lie}, we impose conditions on the distance $d_X$ on $X$ defining the topology on $X$.  Again, the conclusions of Theorem~\ref{th:main} remain valid for a general distance defining the topology.

Throughout the proof, we often fix some $\varepsilon>0$ and assume  that $\varepsilon$ is sufficiently small. This means that it is smaller than some constant depending only on the nilsystem $(X=G/\Gamma,T)$ and the distance $d_{X}$ defined on it, and not on any other parameter such as the integer $n$.

Finally, we choose a bounded Borel fundamental domain $D$ of the projection $\pi\colon G\to X$.

\subsection{Some preliminaries}
\label{subsec:prelim}
\subsubsection{Choosing a distance on $X$}
First, we choose a distance $d_G$ on the group $G$ that defines  its topology. For the moment, we only assume that this distance is invariant under right translations, meaning that
for all $g,g',h\in G$, 
$$
d_G(gh,g'h)=d_G(g,g').
$$

The nilmanifold $X$ is endowed with the quotient distance, meaning that 
for $x,y\in X$,
\begin{equation}
\label{eq:defdX}
d_X(x,y)=\inf\bigl\{d_G(g,h)\colon \pi(g)=x\text{ and }
\pi(h)=y\}.
\end{equation}
In other words, for all $g,h\in G$, we have
\begin{equation}
\label{eq:defdX}
d_X\bigl(\pi(g),\pi(h)\bigr)=\inf_{\alpha,\beta\in\Gamma}d_G(g\alpha,h\beta)
=\inf_{\gamma\in\Gamma}d_G(g,h\gamma).
\end{equation}
Since the inverse image under $\pi$ of every point of $X$ is discrete, 
the infimums in these last two formulas are attained.

We recall that for $2\leq j\leq s$, $G_j$ and $G_j\Gamma$ are  closed subgroups of $G$ and that $\Gamma\cap G_j$ is a cocompact subgroup of $G_j$.
In particular, we deduce that there exists $\delta_0>0$ such that for $2\leq j\leq s$,
\begin{equation}
\label{eq:dGj}
\text{if }\gamma\in\Gamma\text{ is within a distance }\delta_0\text{ of }G_j\text{, then }\gamma\in G_j\cap\Gamma.
\end{equation}
In particular, we deduce that
\begin{equation}
\label{eq:dggprime}
\text{if }d_G(g,g')<\delta_0\text{ and }\pi(g)=\pi(g'),\text{ then }g=g'.
\end{equation}

\subsubsection{Commutators}

The following lemma is used to prove that the exponent $p$ of Theorem~\ref{th:main} is $\geq s-1$.

\begin{lemma} 
\label{lem:tau}
For $1\leq \ell \leq s-1$, we have that $[\tau,G_\ell]\not\subset G_{\ell+2}$.
\end{lemma}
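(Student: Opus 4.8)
The plan is to argue by contradiction, exploiting the minimality hypothesis, which guarantees that $G$ is spanned by $G_0$ and $\tau$. Suppose that $[\tau, G_\ell] \subset G_{\ell+2}$ for some $\ell$ with $1 \leq \ell \leq s-1$. I would like to show that this forces $G_{\ell+1} \subset G_{\ell+2}$, which (combined with the inclusions $G_{\ell+1} \supset G_{\ell+2}$) gives $G_{\ell+1} = G_{\ell+2}$. By the standard fact that the lower central series stabilizes once two consecutive terms coincide, this would imply $G_{\ell+1} = \{1_G\}$, contradicting~\eqref{eq:G2Gs}, since $\ell + 1 \leq s$ and the group is genuinely $s$-step nilpotent.

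The heart of the matter is therefore to understand the generators of $G_{\ell+1} = [G, G_\ell]$. By definition $G_{\ell+1}$ is spanned by commutators $[g, h]$ with $g \in G$ and $h \in G_\ell$. Using the minimality assumption, every $g \in G$ can be written (modulo $\Gamma$, or rather as an element of $G' = G$) in terms of $G_0$ and powers of $\tau$. The key step is to reduce the generators of $[G, G_\ell]$ to two types: commutators $[\tau, h]$ with $h \in G_\ell$, and commutators $[g_0, h]$ with $g_0 \in G_0$ and $h \in G_\ell$. The first type lies in $G_{\ell+2}$ by the contradiction hypothesis. For the second type, I would use that $G_0$ is (connected and) generated by elements of the form arising from the Lie algebra; more precisely, I expect that commutators of $G_0$ with $G_\ell$ can be pushed into higher terms of the series using the connectedness of $G_0$ together with~\eqref{eq:GkGl}, or by observing that $[G_0, G_\ell]$ already sits inside $G_{\ell+1}$ and must be controlled by the structure of $G_0$.

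The main obstacle will be handling the $[G_0, G_\ell]$ contribution: it is not immediately clear that $[G_0, G_\ell] \subset G_{\ell+2}$ follows from the hypothesis $[\tau, G_\ell] \subset G_{\ell+2}$. I would address this by commutator manipulations using the identity~\eqref{eq:com_abc} and the inclusions~\eqref{eq:GkGl}. Specifically, since $G = \langle G_0, \tau \rangle$, any element of $G_{\ell+1}$ is a product of commutators, and I expect to show by induction on word length (using~\eqref{eq:com_abc} to expand $[ab, h]$ and $[a, bh]$ type commutators) that each such commutator lies in the subgroup generated by $\{[\tau, h'] : h' \in G_\ell\}$ together with $G_{\ell+2}$. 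The higher-order correction terms produced by~\eqref{eq:com_abc} all land in $G_{\ell+2}$ by repeated application of~\eqref{eq:GkGl}, since they involve nested commutators with at least one extra factor.

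Once this reduction is complete, the hypothesis $[\tau, G_\ell] \subset G_{\ell+2}$ gives $G_{\ell+1} \subset G_{\ell+2}$, hence $G_{\ell+1} = G_{\ell+2}$, hence (by stabilization of the lower central series) $G_{\ell+1} = G_{\ell+2} = \cdots = G_{s+1} = \{1_G\}$. Since $\ell + 1 \leq s$, this contradicts~\eqref{eq:G2Gs}, completing the proof. The connectedness of $G_0$ is what makes the reduction of the $[G_0, G_\ell]$ terms tractable, so I expect the argument to lean essentially on minimality; without it, $G$ might fail to be generated by $G_0$ and $\tau$ and the statement could genuinely fail.
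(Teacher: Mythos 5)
Your overall skeleton (argue by contradiction, reduce to showing $G_{\ell+1}\subset G_{\ell+2}$, then collapse the lower central series to contradict~\eqref{eq:G2Gs}) matches the paper's, but the step you yourself flag as the main obstacle --- controlling $[G_0,G_\ell]$ --- is a genuine gap, and it cannot be closed by the means you propose. No amount of commutator manipulation with~\eqref{eq:com_abc} and~\eqref{eq:GkGl} will convert a commutator $[g_0,h]$, $g_0\in G_0$, $h\in G_\ell$, into a product of $[\tau,\cdot]$-commutators and elements of $G_{\ell+2}$: the hypothesis $[\tau,G_\ell]\subset G_{\ell+2}$ says nothing at all about such elements, and word-length induction only re-expresses them in terms of themselves. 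In fact the purely group-theoretic statement you are trying to prove is false. Take $G=H\times\Z$ with $H$ the discrete Heisenberg group, let $\tau$ generate the $\Z$ factor, and let $H$ play the role of the subgroup that generates $G$ together with $\tau$. Then $G$ is $2$-step nilpotent and generated by $H$ and $\tau$, the hypothesis $[\tau,G_1]=\{1_G\}\subset G_3$ holds, and yet $G_2=[H,H]\neq\{1_G\}=G_3$. So any correct proof must use the topological structure --- connectedness of $G_0$, the lattice $\Gamma$, and minimality --- in an essential, mechanical way, not merely as an expectation that it ``makes the reduction tractable.''

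This is exactly what the paper does. It first quotients by $G_{\ell+2}$, so that the hypothesis becomes ``$\tau$ commutes with $G_\ell$'' and the goal becomes ``$G_{\ell+1}$ is trivial.'' The term $[G_0,G_\ell]$ is then killed in two topological steps. First, minimality gives that the subgroup generated by $\Gamma$ and $\tau$ is dense in $G$; combined with the hypothesis, the continuous homomorphism $g\mapsto[\gamma,g]$ (for $\gamma\in G_\ell\cap\Gamma$) maps this dense subgroup, hence all of $G$, into the discrete group $\Gamma$, and therefore is trivial on the connected group $G_0$; thus $G_\ell\cap\Gamma$ commutes with $G_0$. Second, the commutator map consequently descends to a continuous bilinear map $\psi\colon G_\ell/(G_\ell\cap\Gamma)\times G_0/G_2\to G_{\ell+1}$, where the first factor is compact (cocompactness of $G_\ell\cap\Gamma$ in $G_\ell$) and the second is connected; composing with any character $\chi$ of $G_{\ell+1}$ gives a continuous homomorphism from the connected group $G_0/G_2$ into the discrete dual of a compact abelian group, which must vanish, so $\psi$ is trivial. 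Hence $G_\ell$ commutes with $G_0$, and since it also commutes with $\tau$ and $G=\langle G_0,\tau\rangle$, it is central, forcing $G_{\ell+1}=\{1_G\}$, the desired contradiction. Density from minimality, discreteness of $\Gamma$, cocompactness of $G_\ell\cap\Gamma$, connectedness of $G_0/G_2$, and Pontryagin duality are precisely the ingredients your proposal is missing, and the counterexample above shows they cannot be dispensed with.
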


\begin{proof}
Let $G'=G/G_{\ell+2}$, $\pi\colon G\to G'$ be the quotient homomorphism, $\Gamma'=\pi(\Gamma)$ and $\tau'=\pi(\tau)$. Let $T'$ be the translation by $\tau'$ on $X'=G'/\Gamma'$.
Then for every $j\geq 1$, we have that $\pi(G_j)=G'_j$.
Therefore $G'_{\ell+2}=\{ 1_{G'}\}$ and $(X',T')$ is an $(\ell+1)$-step nilsystem. But  $G_{\ell+1}\neq G_{\ell+2}$ and so $G'_{\ell+1}\neq\{1_{G'}\}$.  Thus 
 $(X',T')$ is not a $\ell$-step nilsystem.
Moreover, since $(X',T')$ is  a factor of $(X,T)$, it is minimal. 

We remark that   
$\pi([\tau,G_\ell])= [\tau',G'_\ell]$. 
Therefore, substituting $X'$ for $X$, we are reduced to showing that $\tau$ does not commute with $G_\ell$ when $X$ is a $(\ell+1)$-step, but not $\ell$-step, nilsystem.

Assume that $\tau$ commutes with $G_\ell$.
Consider the commutator map $G_{\ell}\times G_0\to G_{\ell+1}$. Since $G$ is $(\ell+1)$-step nilpotent, by~\eqref{eq:com_abc} this map is multiplicative in each coordinate 
separately.
But by~\eqref{eq:GkGl}, $[G_{\ell},G_2]$ is trivial and so 
this commutator map induces a continuous map $G_{\ell}\times G_0/G_2\to G_{\ell+1}$. Finally, $G_{\ell}$ and $G_0/G_2$ are abelian and so this continuous map is bilinear.

On the other hand, by minimality the subgroup of $G$ spanned by $\Gamma$ and $\tau$ is dense in $G$. This and the hypothesis imply that
 $[G_{\ell}\cap\Gamma,G]=[G_\ell\cap\Gamma,\Gamma]\subset \Gamma$.
Therefore, for $\gamma\in G_{\ell}\cap \Gamma$,  the map 
$g\mapsto [\gamma,g]$ continuously maps  the connected group $G_0$ to the discrete group 
$\Gamma$, and so this map is trivial. We conclude that
$G_{\ell}\cap\Gamma$ commutes with $G_0$.

Therefore, the commutator map $G_{\ell}\times G_0\to G_{\ell+1}$ 
induces a bilinear continuous map 
\begin{equation}
\label{eq:com}
\psi\colon \frac{G_\ell}{G_{\ell}\cap\Gamma}\times\frac {G_0}{G_2} \to G_{\ell+1}.
\end{equation}
Let $\chi\colon G_{\ell+1}\to\T$ be a character of the abelian group $G_{\ell+1}$. Then 
$\chi\circ\psi\colon G_{\ell}/(G_{\ell}\cap\Gamma)\times G_0/G_2\to\T$ induces a continuous group homomorphism from group $G_0/G_2$ to the dual group of the compact abelian group 
$G_{\ell}/(G_{\ell}\cap\Gamma)$. Since this dual group is discrete and  $G_0/G_2$ is connected, this group homomorphism is trivial. It follows that $\chi\circ\psi$ is the trivial map.

As this holds for every character $\chi$ of $G_{\ell+1}$, $\psi$ is the trivial map. 
Combining this with definition~\eqref{eq:com}, it follows that the commutator map $G_{\ell}\times G_0\to G_{\ell+1}$ is trivial, and so $G_{\ell}$ commutes with $G_0$.

By assumption, $G$ is spanned by $G_0$ and $\tau$ and $G_{\ell}$ commutes with $\tau$. 
It follows that $G_{\ell}$ is included in the center of $G$, that $G_{\ell+1}$ is trivial, and thus that $G$ is $\ell$-step nilpotent, a contradiction.
\end{proof}

\subsubsection{Some linear algebra}
We make use of the following estimate from linear algebra. 
The proof is postponed to Appendix~\ref{ap:linear}.
\begin{proposition}
\label{prop:lineaire}
Let $\R^d$ be endowed with the Euclidean norm $\norm\cdot$ and let
the Lebesgue measure of a Borel subset $K$ of $\R^d$ be written $|K|$. 
Let $A$ be a $d\times d$ matrix and assume that it is unipotent, meaning that $(\id-A)^d=0$. For every integer $n\geq 2$, let 
$$
\CW_n=\bigl\{ \xi\in\R^d\colon \norm{A^k\xi}\leq 1\text{ for }1\leq k<n\bigr\}.
$$
If
$$
p=\sum_{k=1}^{d-1}\dim(\range(\id-A)^k), 
$$
there exist positive constants $C$ and $C'$ (depending on $d$ and on $A$) such that
\begin{equation}
\label{eq:boundmeasure}
C n^{-p} \leq |\CW_n|\leq C'n^{-p}
\end{equation}
for every $n$.
\end{proposition}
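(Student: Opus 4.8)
The plan is to prove the two-sided bound by reducing to a single Jordan block and then analyzing one block through an explicit anisotropic rescaling.

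First I would record two invariances that let me normalize $A$. Since $\{\xi:\norm{A^k\xi}\le r\}=r\,\CW_n$ for every $r>0$, replacing the Euclidean norm by any equivalent norm $\norm\cdot'$ with $c_1\norm\cdot\le\norm\cdot'\le c_2\norm\cdot$ alters $|\CW_n|$ only by a factor lying in $[c_2^{-d},c_1^{-d}]$, uniformly in $n$; and conjugating $A$ by an invertible $P$ amounts to replacing the Euclidean norm by the equivalent norm $\norm{P\,\cdot}$ and scaling the volume by $|\det P|$. As $p$ depends only on the ranks of the powers of $\id-A$, it is a conjugacy invariant, so I may assume $A$ is in real Jordan form. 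Being unipotent, $A$ has $1$ as its only eigenvalue, hence $A=\bigoplus_{i=1}^m J_{b_i}$ is a direct sum of unipotent Jordan blocks with $\sum_i b_i=d$. Taking the max-norm adapted to this decomposition makes $\CW_n$ the product of the corresponding bodies of the individual blocks, while a direct count gives $\sum_{k\ge1}\dim\range((\id-A)^k)=\sum_i\binom{b_i}{2}$. Thus $p$, the volume, and the target $n^{-p}$ all factor over blocks, and it suffices to treat one block, where $d=b$ and $p=\binom b2$.

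For a single block, writing $N=\id-A$ for the associated nilpotent shift and $\xi=(\xi_1,\dots,\xi_b)$, one has $(A^k\xi)_l=\sum_{j=0}^{b-l}(-1)^j\binom kj\xi_{l+j}$, a polynomial in $k$ of degree at most $b-l$. I would then substitute $\xi=D_nu$ with the diagonal matrix $D_n=\operatorname{diag}(1,n^{-1},\dots,n^{-(b-1)})$, of Jacobian $n^{-\binom b2}$, so that $|\CW_n|=n^{-\binom b2}\,|\wh\CW_n|$, where $\wh\CW_n=\{u:\norm{A^kD_nu}\le1\text{ for }1\le k<n\}$. The task reduces to showing $0<c\le|\wh\CW_n|\le C<\infty$ uniformly in $n$, for which the single elementary input is the bound $\binom kj\,n^{-j}\le(k/n)^j/j!\le 1/j!$ valid for $0\le k\le n$.

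The lower bound is immediate: if $|u_i|\le 1/e$ for all $i$, then for every $l$ and every $1\le k<n$ one gets $|(A^kD_nu)_l|\le n^{-(l-1)}\sum_{j}\binom kj n^{-j}|u_{l+j}|\le n^{-(l-1)}\cdot e\cdot(1/e)\le 1$, so a fixed cube lies in $\wh\CW_n$ and $|\wh\CW_n|\ge(2/e)^b$. The upper bound is the crux, and I expect it to be the main obstacle. Here I would keep only the top-row constraints $|(A^{k_i}D_nu)_1|\le1$ at $b$ well-separated times $k_1<\dots<k_b$ chosen in $\{1,\dots,n-1\}$ with $k_i/n\to i/b$; these read $M^{(n)}u=v$ with $M^{(n)}_{i,j}=(-1)^j\binom{k_i}{j}n^{-j}$ and $|v_i|\le1$. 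Since $M^{(n)}_{i,j}\to(-1)^j(i/b)^j/j!$, the limit is a Vandermonde matrix in the distinct nodes $i/b$ times an invertible diagonal matrix, hence invertible; therefore $M^{(n)}$ is invertible with uniformly bounded inverse for all $n\ge n_0$, giving $|u_i|\le C$ for every $u\in\wh\CW_n$ and placing $\wh\CW_n$ inside a fixed ball. The content of the upper bound is thus exactly this uniform non-degeneracy of a limiting generalized Vandermonde matrix, i.e.\ the fact that finitely many constraints at times $k_i\asymp n$ already pin down $u$ with constants independent of $n$. Combining the two estimates yields $|\CW_n|\asymp n^{-p}$ for $n\ge n_0$; the finitely many remaining $n$ are harmless since each $\CW_n\subseteq\CW_2$ is a bounded convex body of finite positive volume, and are absorbed by enlarging the constants.
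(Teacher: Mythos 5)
Your proof is correct, and its skeleton matches the paper's: the same reduction to a single Jordan block by conjugation and an adapted product norm, the same bookkeeping $p=\sum_i\binom{b_i}{2}$ summing over blocks, and essentially the same lower bound (your fixed cube in the rescaled body is the paper's box $\prod_j[-n^{-(j-1)},n^{-(j-1)}]$ up to a constant). The genuine difference is in the crux, the upper bound. The paper also keeps only first-row constraints at finitely many times comparable to $n$, but it chooses \emph{dyadic} times $k=2^mq$, $0\le m\le r$, with $2^rq\le n-1<2^r(q+1)$, and runs an explicit elimination: iterated divided differences $p_{\ell+1,j}(k)=\frac1k\bigl(p_{\ell,j}(k)-p_{\ell,j}(k/2)\bigr)$ successively remove the unknowns of lower index, and a double induction yields $|x_\ell|\le Cn^{-(\ell-1)}$ directly, with constants valid for all $n\ge 2$. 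You instead pass to the rescaled body via $D_n$ and use a soft perturbation argument: the constraint matrix $M^{(n)}$ at equispaced times $k_i\approx in/b$ converges entrywise to a generalized Vandermonde matrix at distinct nodes, which is invertible, so $(M^{(n)})^{-1}$ is uniformly bounded for $n\ge n_0$ and the rescaled body sits in a fixed cube. Your route is shorter and makes the mechanism transparent (the content is exactly the non-degeneracy of a limiting Vandermonde system), at the price of non-explicit constants and the need to absorb the finitely many $n<n_0$ at the end, which you correctly do; the paper's elimination is more laborious but explicit and needs no limiting argument. Two cosmetic points: your formula $(A^k\xi)_l=\sum_j(-1)^j\binom{k}{j}\xi_{l+j}$ corresponds to a Jordan block with $-1$ on the superdiagonal, which is conjugate to $J_b$ by a diagonal sign matrix, so this is immaterial; and your restriction to the times $1\le k<n$ is consistent with the statement of the proposition throughout, which is actually cleaner than the paper's occasional use of $0\le k<n$.
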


\subsection{Reduction to a local problem}
\label{subsec:cross}

Throughout this section, $n>1$ is an integer. We assume that $\varepsilon$ is given and is sufficiently small, 
$x_0$ is a point in $X$, and $h\in D$ is chosen such that $\pi(h)=x_0$.

For the moment, we view $x_0$ and $h$ as fixed, but it is important that the bounds do not depend on $x_0\in X$, and thus also not on $h\in D$.

We  study the set $V\subset X$ defined by 
$$
V = \{x\in X \colon d_X(T^kx_0,T^kx)<\varepsilon\text{ for }0\leq k<n\}.
$$

Since the infimum in the definition of $d_X$ (see~\eqref{eq:defdX}) 
is attained, for every $x\in V$ there exists $g\in G$ with 
\begin{equation}
\label{eq:gonx}
x=g\cdot x_0\text{ and }d_G(g,1_G)=d_X(x,x_0).
\end{equation}
Furthermore, for sufficiently small $\varepsilon$, it follows from~\eqref{eq:dggprime} that these conditions completely define $g$.  Let $W$ denote the set of elements $g$ associated in this way to points of $V$.  Again, the choice of small $\varepsilon$ implies that $W$ is included in the connected component $G_0$ of $1_G$ in $G$.

\begin{convention}
In the sequel, we often view $x$, and so $g$, fixed. When needed, we emphasize the dependence of $g$ on $x$ satisfying~\eqref{eq:gonx} by writing $g(x)$ instead of $g$.
\end{convention}

By hypothesis and~\eqref{eq:defdX}, for $0\leq k<n$, there exists $\gamma_k\in\Gamma$ with
\begin{equation}
\label{eq:defgammak}
d_G(\tau^kh\gamma_k,\tau^kgh)=d_X(\tau^k\cdot x_0,\tau^kg\cdot x_0)=d_X(T^kx_0,T^k x)<\varepsilon
\end{equation}
and, by~\eqref{eq:dggprime} again, this $\gamma_k$ is unique. Clearly, $\gamma_0=1_G$.

In  the remainder of this section, we show: 
\begin{lemma}
\label{lem:gammak}
Let $x_0$, $x$, $g$, and $\gamma_k$ be as above.
 Then
$\gamma_k=1_G$ for $0\leq k<n$.
\end{lemma}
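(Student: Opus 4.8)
The plan is to show that each $\gamma_k$ must be trivial by exploiting the fact that $\gamma_k$ lies in the discrete group $\Gamma$ while being forced to stay close to the identity. The key observation is the following: from~\eqref{eq:defgammak}, the element $\gamma_k$ is defined as the unique element of $\Gamma$ realizing the infimum in the quotient distance, and crucially this infimum is $<\varepsilon$. I would first extract from~\eqref{eq:defgammak} an explicit bound on $d_G(\tau^k h \gamma_k, \tau^k g h)$. Using right-invariance of $d_G$ and rewriting, one can relate $\gamma_k$ to a commutator-type expression: since $g \in G_0$ is within $\varepsilon$ of $1_G$, the element $h^{-1}\tau^{-k}(\tau^k g h) = h^{-1} g h$ is a conjugate of $g$, and conjugation by $\tau^k h$ moves $\gamma_k$ to something controllable. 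The aim is to write $\gamma_k$ (or a conjugate of it) as lying within a small, controlled distance of $1_G$, and then invoke~\eqref{eq:dGj} or~\eqref{eq:dggprime} to force it into a subgroup, ultimately into $G_s \cap \Gamma$ and then to be trivial.

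The natural strategy is induction on the step, or equivalently a descending induction along the lower central series $G = G_1 \supset G_2 \supset \dots \supset G_s$. First I would establish that $\gamma_k \in G_2 \cap \Gamma$: the point is that in the abelianization $G/G_2\Gamma$, which is the Kronecker factor, the orbit points $T^k x_0$ and $T^k x = \tau^k g \cdot x_0$ project to the \emph{same} point (since $g \in G_0$ and the projection of $g$ to the Kronecker factor is trivial, as $g$ is $\varepsilon$-close to $1_G$ and the relevant quotient separation is bounded below). This equality of projections, combined with~\eqref{eq:dGj} applied with $j=2$, forces $\gamma_k \in G_2 \cap \Gamma$, provided $\varepsilon$ is small enough that $\gamma_k$ is within $\delta_0$ of $G_2$. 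Then one repeats: having shown $\gamma_k \in G_j \cap \Gamma$, one uses the quotient $G/G_{j+1}$ together with the distance bound and~\eqref{eq:dGj} for $j+1$ to push $\gamma_k$ into $G_{j+1} \cap \Gamma$, continuing until $\gamma_k \in G_{s+1} \cap \Gamma = \{1_G\}$.

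The main obstacle, and the step requiring the most care, is controlling the distance $d_G(\gamma_k, G_j)$ at each stage of the induction and verifying it stays below $\delta_0$ uniformly. The subtlety is that $\tau^k$ is a translation that can move points far away, so one must work with the right-invariant structure carefully: the quantity one actually controls is $d_G(\tau^k h \gamma_k, \tau^k g h) < \varepsilon$, and converting this into information about $\gamma_k$ itself requires comparing $\tau^k h \gamma_k$ with $\tau^k h$, i.e. controlling $d_G(h\gamma_k, gh)$ after cancelling $\tau^k$ on the left (which right-invariance does not directly allow). The correct maneuver is to pass to the quotient nilsystems $X/G_j\Gamma$ where the induced distances behave well, and to use that in each such quotient the $\varepsilon$-closeness of orbits, together with minimality (so the orbit genuinely explores the quotient) and the discreteness of $\Gamma$, pins down $\gamma_k$. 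I expect that once $\gamma_k$ is known to lie in $G_j\cap\Gamma$, the residual displacement in the next quotient $G_j\Gamma/G_{j+1}\Gamma$ is again bounded by a quantity tending to $0$ with $\varepsilon$, so that~\eqref{eq:dGj} applies at the next level; assembling these estimates uniformly in $k$ and $x_0$ is the technical heart of the argument.
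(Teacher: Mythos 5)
Your reduction to showing that each $\gamma_k$ lies in successively deeper terms of the lower central series has roughly the right skeleton (the paper also argues by induction, quotienting by $G_s$ to invoke the result for $(s-1)$-step systems), but the mechanism you propose for each descent step does not work, and the idea that actually makes the proof go through is absent. First, your opening step is wrong as stated: $T^kx_0$ and $T^kx$ do \emph{not} project to the same point of the Kronecker factor $G/G_2\Gamma$. That quotient is a connected compact abelian group, so there is no ``separation bounded below'' forcing the small but generally nonzero image of $g$ to be trivial; separation phenomena like \eqref{eq:dGj} are available only for elements of the discrete group $\Gamma$, never for the quotient $G/G_2\Gamma$. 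The correct route to $\gamma_k\in G_2\cap\Gamma$ is the commutator computation leading to \eqref{eq:bound}: the element $[\tau^kh,\gamma_k]\gamma_k$ is $2\varepsilon$-close to $[g^{-1},\tau^k]\in G_2$, and since $[\tau^kh,\gamma_k]$ also lies in $G_2$, the element $\gamma_k$ is close to $G_2$ and \eqref{eq:dGj} applies.

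Second, and more fundamentally, the inductive step ``having $\gamma_k\in G_j\cap\Gamma$, the distance bound and \eqref{eq:dGj} push $\gamma_k$ into $G_{j+1}\cap\Gamma$'' fails. What the distance bound actually gives is that $\gamma_k$ is $2\varepsilon$-close to (a central perturbation of) $g(k):=[g^{-1},\tau^k]$. This element lies in $G_2$, but for large $k$ it is \emph{not} small and \emph{not} close to $G_{j+1}$: already in the Heisenberg case ($s=2$, $G_2$ the center) one has $g(k)=[\tau,g]^k$, which moves across $G_2$ linearly in $k$ and can approach nontrivial elements of $G_2\cap\Gamma$. So your assertion that ``the residual displacement in $G_j\Gamma/G_{j+1}\Gamma$ is bounded by a quantity tending to $0$ with $\varepsilon$'' uniformly in $k$ is precisely the point at issue; it is essentially the conclusion \eqref{eq:conjug} of Corollary~\ref{cor:summarize}, so assuming it is circular. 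What the paper uses instead, and what is missing from your proposal, is the polynomial structure of the sequence $k\mapsto g(k)$: its $s$-th finite difference is identically $1_G$, so $g(k)$ satisfies an exact recurrence of length $2^s$; since the $\gamma_k$ are central (once they are known to lie in $G_s$) and $\Gamma$ is discrete, the $2\varepsilon$-closeness transfers this recurrence \emph{exactly} to the sequence $(\gamma_k)$; and for the finitely many initial values $0\le k\le s-1$ the element $g(k)$ \emph{is} $O(\varepsilon)$-small, with a constant depending only on $s$ and not on $n$, so $\gamma_k=1_G$ there by discreteness. Triviality of all $\gamma_k$ for $0\le k<n$ then follows by induction along the recurrence. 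Without this finite-difference argument there is no way to control $\gamma_k$ for $k$ comparable to $n$, and no purely metric descent through the lower central series can close the gap.
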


\subsubsection{Initial computations}

From the characterization~\eqref{eq:defgammak} of $\gamma_k$, it follows that for $0\leq k<n$,
\begin{align*}
\varepsilon
& > d_G\bigl([\tau^kh,\gamma_k] \gamma_k\tau^kh\;,\; \tau^kgh\bigr)\\
&=
d_G\bigl([\tau^k h,\gamma_k]\gamma_k\tau^k\;,\;\tau^kg\bigr)
\text{ by right invariance}\\
&=d_G\bigl( [\tau^kh,\gamma_k]\gamma_k\tau^k\;,\; g[g^{-1},\tau^k]\tau^k\bigr)\\
&=d_G\bigl( [\tau^kh,\gamma_k]\gamma_k\;,\; g[g^{-1},\tau^k]\bigr)
\text{ by right invariance (again).}
\end{align*}

Since $d_G(g,1_G)<\varepsilon$, we have that
\begin{equation}
\label{eq:bound}
d_G\bigl( [\tau^kh,\gamma_k]\gamma_k\;,\; [g^{-1},\tau^k]\bigr)
<2\varepsilon \text{ for }0\leq k<n.
\end{equation}

We proceed by  induction on the degree $s$ of the $s$-step nilsystem $X$.

 \subsubsection{The case $s=2$}
 
 Since $G_2$ is included to the center of $G$, it follows from~\eqref{eq:com_abc} that the map $(x,y)\mapsto[x,y]$ from $G\times G$ to $G_2$ is bilinear and thus
 $[g^{-1},\tau^k]=[g,\tau]^{-k}=[\tau,g]^k$ for every $k$. Moreover, 
$[\tau^kh,\gamma_k]$ belongs to the center of $G$, and Equation~\eqref{eq:bound} can be rewritten as  
\begin{equation}
\label{eq:bound2}
d_G\bigl( \gamma_k\;,\; [\tau^kh,\gamma_k]^{-1}[\tau,g]^k\bigr)
<2\varepsilon
\text{ for }0\leq k<n.
\end{equation}
Therefore, for $0\leq k<n$, we have that $\gamma_k$ is within distance $2\varepsilon$ of $G_2$. By Remark~\eqref{eq:dGj}, we have that 
$$
\gamma_k\in G_2\cap\Gamma,
$$
 and in particular $\gamma_k$ belongs to the center of $G$.
Then~\eqref{eq:bound2} implies that 
\begin{equation}
\label{eq:bound2b}
d_G(\gamma_k, [\tau,g]^k)<2\varepsilon\text{ for }0\leq k<n.
\end{equation}

Since $d_G(g,1_G)<\varepsilon$, it follows that $d_G([\tau,g],1_G)<C\varepsilon$ for some $C>0$.
Thus $d_G(\gamma_1,1_G)<(C+2)\varepsilon$. Since $\Gamma$ is discrete and $\varepsilon$ is small, 
$$
\gamma_0=\gamma_1=1_G.
$$
On the other hand, by~\eqref{eq:bound2b} again,  
$$
d_G\bigl(\gamma_k\gamma_{k+1}^{-2}\gamma_{k+2}\;,\;1_G)< 8\varepsilon  
\text{ for }0\leq k<n-2.
$$
Since $\Gamma$ is discrete and $\varepsilon$ is small, 
$$\gamma_k\gamma_{k+1}^{-2}\gamma_{k+2}=1_G\text{ for }0\leq k<n-2.
$$
 We deduce that $\gamma_k=1_G$ for $0\leq k<n$, as announced.

\subsubsection{The general case}
Assume that $X$ is an $s$-step nilsystem for some $s> 2$ 
and that the result has been proven for an $(s-1)$-step nilsystem.  
Maintaining the same notation and conventions as above, we 
show that $\gamma_k=1$ for every $k$.

Taking the quotient by $G_s$, the induction hypothesis implies that  
$$
\gamma_k\in G_s\text{ for } 1\leq k<n
$$
  and in particular  $\gamma_k$ belongs to the center of $G$.
For every $k\in\Z$, we write
\begin{equation}
\label{eq:defgk}
g(k):= [g^{-1},\tau^k].
\end{equation}
By the estimate in Equation~\eqref{eq:bound}, we have that
\begin{equation}
\label{eq:bounds}
d_G\bigl( \gamma_k\;,\; g(k)\bigr)
<2\varepsilon \text{ for }0\leq k<n.\end{equation}

A new difficulty arises here that does not come up for $s=2$, as 
in general $g(k)$ does not belong to the center of $G$.
We begin by showing that  the map $k\mapsto g(k)$ 
 is a polynomial 
map from $\Z$ to $G_2$ (see~\cite{L1}).
The computations for this are fairly explicit and we include them.

First, since $d_G(g,1_G)<\varepsilon$, there exists a constant $C>0$ such that
$d_G\bigl([g^{-1},\tau^k],1_G\bigr)<C\varepsilon$ for 
$0\leq k\leq s-1$
and, by definition~\eqref{eq:defgk} of $g(k)$ and~\eqref{eq:bounds},
$d_G(\gamma_k,1_G)<(C+2)\varepsilon$  for 
$0\leq k\leq s-1$.
Since $\Gamma$ is discrete and $\varepsilon$ is small, we deduce that 
\begin{equation}
\label{eq:gamma1gammas}
\gamma_k=1_G\text{ for }0\leq k\leq s-1.
\end{equation}

On the other hand,
recall that the  \emph{difference operator}  $D$ maps a sequence $(h(k)\colon k\geq 0)$ with values in $G$ to the sequence $(Dh(k)\colon k\geq 0)$ given by 
$$
Dh(k)=h(k)^{-1}h(k+1)\text{ for every } k\geq 0.
$$
Applying this definition to the sequence $\bigl(g(k) \colon k\geq 0\bigr)$ defined by~\eqref{eq:defgk},   it is easy to check by induction that for every $j\geq 1$,
$$
D^jg(k)=\tau^kv_j\tau^{-k}, 
$$
 where 
$$
v_1=[g^{-1},\tau]\text{ and }v_{j+1}=[v_j^{-1},\tau].
$$
Therefore, for every $j\geq 1$ we have $v_j\in G_{j+1}$ and $D^jg(k)\in G_{j+1}$ for every $k$.
(In the vocabulary of Leibman~\cite{L1}, this means that the sequence $\bigl(g(k)\colon k\geq 0\bigr)$ belongs to the class
${\mathscr P}_{(0,1,2,\dots)}G$.)
In particular  $v_s=1_G$ and 
$$
D^sg(k)=1_G\text{ for every }k\geq 0.
$$
By the definition of the difference operator and by induction, we have  that the sequence $(g(k)\colon k\geq 0)$ satisfies a recurrence relation of the form: for every $k\geq 0$, 
$$
1_G=D^sg(k)=g(k+m_1)^{\eta_1}g(k+m_2)^{\eta_2}\dots
 g(k+m_{2^s})^{\eta_{2^s}},
$$
where  
\begin{equation}
\label{eq:etasms}
 0\leq m_j<s \text{ and } \eta_j=\pm 1 \text{ for }1\leq j<2^s\ ;\
m_{2^s}=s \text{ and }\eta_{2^s}=1.
\end{equation}

Since $\gamma_\ell$ belongs to the center of $G$ for every $\ell$, 
\begin{multline*}
d_G\Bigl(1_G\;,\;\gamma_{k+m_1}^{\eta_1}\dots
 \gamma_{k+m_{2^s}}^{\eta_{2^s}}\Bigr)\\
= d_G\Bigl(
 \bigl(g(k+m_1)^{\eta_1}\dots
 g(k+m_{2^s})^{\eta_{2^s}}
\bigr)\bigl(\gamma_{k+m_1}^{\eta_1}\dots
 \gamma_{k+m_{2^s}}^{\eta_{2^s}}\bigr)^{-1} \;,\;1_G)\\
=d_G\Bigl( (g(k+m_1)\gamma_{k+m_1}^{-1})^{\eta_1} 
 \dots
(g(k+m_{2^s})\gamma_{k+m_{2^s}}^{-1})^{\eta_{2^s}} 
\;,\;1_G\Bigr)\\
\leq\sum_{j=1}^{2^s}d_G\bigl(g(k+m_j)\gamma_{k+m_j}^{-1}\;,\;1_G\bigr)
<2^{s+1}\varepsilon\quad \text{ by~\eqref{eq:bounds}.}
\end{multline*}

Since $\Gamma$ is  discrete and $\varepsilon$ is small, we deduce that
\begin{equation}
\label{eq:recurs}
\gamma_{k+m_1}^{\eta_1}\dots
 \gamma_{k+m_{2^s}}^{\eta_{2^s}}=1_G\text{ for every }k\geq 0.
\end{equation}
In other words, the $s^{\text{th}}$ iterated difference $(D^s\gamma_k)$ of the sequence $(\gamma_k\colon k\geq 0)$ is  trivial, and this sequence is a polynomial sequence in the abelian group 
$G_s\cap\Gamma$.

Recalling that $m_{2^s}=s$ and  $\eta_{2^s}=1$, 
combining~\eqref{eq:gamma1gammas}, \eqref{eq:recurs} and~\eqref{eq:etasms} and using induction, 
we have that $\gamma_k=1_G$ for  $0\leq k<n$ (see also
Proposition 3.1 in~\cite{L1}).

This concludes the proof of Lemma~\ref{lem:gammak}. \qed

\subsubsection{A summary} 
\label{subsec:summarize}
\begin{corollary}
\label{cor:summarize}
Let $x_0\in X$ and $h\in D$ with $\pi(h)=x_0$. 
Define 
$$
V=\{ x\in X\colon d_X(T^kx_0,T^kx)<\varepsilon\text{ for }0\leq k<n\}.
$$
Then, for every $x\in V$,  there exists a unique $g=g(x)\in  G$ satisfying 
$$
x=g\cdot x_0\text{ and }d_G(g,1)=d_X(x,x_0).
$$
We have
\begin{equation}
\label{eq:conjug}
d_G(\tau^k g\tau^{-k},1_G)
<\varepsilon\text{ for }0\leq k<n.
\end{equation}
\end{corollary}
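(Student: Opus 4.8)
The plan is to observe that this corollary is essentially a repackaging of Lemma~\ref{lem:gammak} together with the setup already established in this section, so that very little new work is required. The existence and uniqueness of $g=g(x)$ satisfying $x=g\cdot x_0$ and $d_G(g,1_G)=d_X(x,x_0)$ is exactly what was recorded at~\eqref{eq:gonx}, where uniqueness for sufficiently small $\varepsilon$ was deduced from~\eqref{eq:dggprime}. Thus the only genuinely new assertion to prove is the conjugation estimate~\eqref{eq:conjug}.

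To obtain~\eqref{eq:conjug}, I would first invoke Lemma~\ref{lem:gammak} to conclude that $\gamma_k=1_G$ for every $0\leq k<n$. Substituting this into the defining relation~\eqref{eq:defgammak} for $\gamma_k$ then collapses that identity to
$$
d_G(\tau^k h,\tau^k g h)=d_X(T^kx_0,T^kx)<\varepsilon\text{ for }0\leq k<n.
$$
Finally, I would use the right invariance of $d_G$: multiplying both arguments on the right by $h^{-1}\tau^{-k}$ sends $\tau^k h$ to $1_G$ and $\tau^k g h$ to $\tau^k g\tau^{-k}$, so that
$$
d_G(\tau^k h,\tau^k g h)=d_G(1_G,\tau^k g\tau^{-k}),
$$
and~\eqref{eq:conjug} follows at once.

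There is no serious obstacle here: all of the substance is contained in Lemma~\ref{lem:gammak}, whose proof was the real work of the section, and the derivation above is purely formal. The only points to keep in mind are that $\varepsilon$ must be small enough for the uniqueness of $g$ (and of each $\gamma_k$) to hold, and that the cancellation of $h$ and $\tau^k$ relies only on the right invariance of $d_G$, not on any finer property of the distance.
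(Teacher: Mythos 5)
Your proposal is correct and follows exactly the paper's own argument: the paper likewise notes that existence and uniqueness of $g$ were already established via~\eqref{eq:gonx} and~\eqref{eq:dggprime}, and derives~\eqref{eq:conjug} from~\eqref{eq:defgammak}, Lemma~\ref{lem:gammak}, and right invariance of $d_G$. Your write-up merely spells out the right-translation cancellation by $h^{-1}\tau^{-k}$ that the paper leaves implicit.
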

The last formula follows from~\eqref{eq:defgammak}, Lemma~\ref{lem:gammak} and invariance of the distance under right translations.

\section{Proof of Theorem~\ref{th:main}: computing the complexity}
\subsection{Working in the Lie algebra}
\label{subsec:lie}
As discussed in Section~\ref{subsec:nilsystems},
we can  assume that the connected component $G_0$ of  the identity $1_G$
 is simply connected. 
 
 As we consider neighborhoods $W$ of $1_G$ such that $d_G(g,1_G)<\varepsilon$ for every $g\in W$, we have that $W\subset G_0$.  Thus it suffices 
 to give a description of $G_0$.

Recall that the exponential map is a homeomorphism from the Lie algebra $\CG$ onto $G_0$. For every $j\geq 1$, we denote the  Lie algebra of $G_j$  by 
$\CG_j$ and the exponential map takes $\CG_j$ onto $G_j$.

Let $\CG$ be endowed with some Euclidean norm $\norm\cdot$ and some orthonormal basis for this norm.  We use this basis
to identify $\CG$ and $\R^d$.

Let the distance  $d_G$ on $G$ be the right invariant Riemannian distance  such that the associated norm on the Lie algebra $\CG$ is $\norm\cdot$. We have
\begin{equation}
\label{eq:isometry}
d_G(\exp(\xi), 1_G)=\norm\xi\text{ for every }\xi\in\CG.
\end{equation}

As before, we denote the Lebesgue measure of a subset $\CL$ of $\CG$ by $|\CL|$.
In general, the exponential map does not take the Lebesgue measure of $\CG$ to the Haar measure $\lambda$  of $G$. But, because all elements $g$ under consideration belong to the compact subset 
$$
K:=\{g\in G\colon d_G(g,1_G)\leq 1\}
$$
of $G$, the density (with respect to $\lambda$) of the image of Lebesgue measure under the exponential map is bounded above and below by some positive constants $C$ and $C'$: 
\begin{multline}
\label{eq:comparemeasures}
\text{for every }L\subset K,\\
C\,\bigl|\bigl\{ \xi\in \CG\colon \exp(\xi)\in L\bigr\}\bigr|
\leq 
\lambda(L)
\leq
C'\,\bigl|\bigl\{ \xi\in \CG\colon \exp(\xi)\in L\bigr\}\bigr|. 
\end{multline}

We recall that the  linear map $\ad_\tau\colon\CG\to\CG$ is defined to be the differential
  of the map $g\mapsto \tau g\tau^{-1}$ from $G$ to itself, evaluated at the point $1_G$.

We have that
\begin{equation}
\label{eq:defAd}
\tau \exp(\xi)\tau^{-1} =\exp(\ad_\tau\xi)\text{ for every }\xi\in\CG.
\end{equation}
\begin{notation}
 We write $\Phi_\tau\colon G\to G$ for the map $g\mapsto 
\tau g\tau^{-1} g^{-1}$. 
\end{notation}
The differential of $\Phi_\tau$ evaluated at $1_G$ is 
$\ad_\tau-\id\colon\CG\to\CG$.
%We have that
%\begin{equation}
%\label{eq:defAd}
%\Phi_\tau\bigl( \exp(\xi)\bigr) =\exp(\ad_\tau\xi)\text{ for every }\xi\in\CG.
%\end{equation}
Since $G$ is $s$-step nilpotent, the $s^{\text{th}}$ iterate of $\Phi_\tau$ is the constant map $1_G$.  Thus 
$$
(\ad_\tau-\id)^s=0, 
$$
meaning that the  map $\ad_\tau-\id$ is nilpotent. More precisely, for every $k\geq 1$, we have that $\Phi_\tau(G_k)\subset G_{k+1}$ and thus 
$(\ad_\tau-\id)\CG_k\subset\CG_{k+1}$. 
%Therefore, the matrix of the linear map $\ad_\tau$ in the given Malcev basis of $\CG$ is upper triangular, with $1$'s on the diagonal.
%
\begin{lemma}
\label{lem:ps}
The total commutator dimension $p$ (Definition~\ref{def:com_dim}) of $X$ 
satisfies 
$$p\geq \dim\bigl(\range(\ad_\tau-\id)\bigr)\geq s-1.
$$
\end{lemma}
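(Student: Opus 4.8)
The plan is to prove the two inequalities separately, the first being essentially free and the second carrying all the content. For the first, I would simply observe that by Definition~\ref{def:com_dim} the quantity $\dim(\range(\ad_\tau-\id))$ is exactly the $\ell=1$ summand in the expression defining $p$, and that every remaining summand is a nonnegative integer; hence $p\geq\dim(\range(\ad_\tau-\id))$. It thus remains to show $\dim(\range(\ad_\tau-\id))\geq s-1$. I would write $B=\ad_\tau-\id$ and recall from the preceding discussion that $B$ is nilpotent, that $B\CG_k\subset\CG_{k+1}$ for every $k\geq 1$, and that $B$ is the differential at $1_G$ of the map $\Phi_\tau\colon g\mapsto\tau g\tau^{-1}g^{-1}=[\tau,g]$.

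The central step is to transfer the non-inclusion of Lemma~\ref{lem:tau} from the group to the Lie algebra, namely to prove that $B\CG_\ell\not\subset\CG_{\ell+2}$ for $1\leq\ell\leq s-1$. Fixing such an $\ell$, I would first check that the map $\phi_\ell\colon g\mapsto[\tau,g]\bmod G_{\ell+2}$ is a group homomorphism from $G_\ell$ to the abelian group $G_{\ell+1}/G_{\ell+2}$: applying identity~\eqref{eq:com_abc} with $a=\tau$ and $b,c\in G_\ell$, the correction term $[b,[\tau,c]]$ lies in $[G_\ell,G_{\ell+1}]\subset G_{2\ell+1}\subset G_{\ell+2}$ by~\eqref{eq:GkGl}, while the quotient is abelian since $[G_{\ell+1},G_{\ell+1}]\subset G_{2\ell+2}\subset G_{\ell+2}$. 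The differential of $\phi_\ell$ at $1_G$ is the composition of $B\colon\CG_\ell\to\CG_{\ell+1}$ with the projection $\CG_{\ell+1}\to\CG_{\ell+1}/\CG_{\ell+2}$, and this differential vanishes precisely when $B\CG_\ell\subset\CG_{\ell+2}$. Since a homomorphism out of a connected Lie group is trivial if and only if its differential is zero, I would conclude that $[\tau,G_\ell]\subset G_{\ell+2}$ if and only if $B\CG_\ell\subset\CG_{\ell+2}$; Lemma~\ref{lem:tau} rules out the left-hand inclusion, giving $B\CG_\ell\not\subset\CG_{\ell+2}$.

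For the final step, for each $\ell$ with $1\leq\ell\leq s-1$ I would use the non-inclusion just established to pick a vector $w_\ell\in B\CG_\ell\subset\range B$ with $w_\ell\in\CG_{\ell+1}\setminus\CG_{\ell+2}$, and then show that $w_1,\dots,w_{s-1}$ are linearly independent by a standard filtration argument: if $\sum_\ell c_\ell w_\ell=0$ and $\ell_0$ is the smallest index with $c_{\ell_0}\neq 0$, then $w_\ell\in\CG_{\ell+1}\subset\CG_{\ell_0+2}$ for every $\ell>\ell_0$, so $c_{\ell_0}w_{\ell_0}=-\sum_{\ell>\ell_0}c_\ell w_\ell\in\CG_{\ell_0+2}$, contradicting $w_{\ell_0}\notin\CG_{\ell_0+2}$. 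This exhibits $s-1$ linearly independent vectors in $\range B$, whence $\dim(\range(\ad_\tau-\id))\geq s-1$.

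The hard part will be the transfer in the second paragraph. One must keep track of the fact that $g\mapsto[\tau,g]$ is a homomorphism only \emph{modulo} $G_{\ell+2}$, verify the connectedness needed to pass from the vanishing of the differential to the triviality of $\phi_\ell$, and in particular handle the case $\ell=1$, where $G_1=G$ is not connected: there I would reduce to the identity component $G_0$, using that $G/G_0$ is abelian (so $[\tau,g]\bmod G_3$ is controlled by $g\in G_0$) together with $G_0=\exp\CG$ from the simple-connectivity assumption of Section~\ref{subsec:lie}.
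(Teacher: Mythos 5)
Your proposal is correct and follows essentially the same route as the paper: the paper likewise converts Lemma~\ref{lem:tau} into a Lie-algebra statement via the homomorphism $\Psi_\ell\colon G_\ell\to G_{\ell+1}/G_{\ell+2}$ induced by $\Phi_\tau$, whose differential is $p_\ell\circ(\ad_\tau-\id)$, to get $(\ad_\tau-\id)\CG_\ell\not\subset\CG_{\ell+2}$, and your explicit independent vectors $w_1,\dots,w_{s-1}$ are just a reformulation of the paper's telescoping inequality $\dim\bigl((\ad_\tau-\id)\CG_\ell\bigr)\geq\dim\bigl((\ad_\tau-\id)\CG_{\ell+1}\bigr)+1$. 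If anything, you are more careful than the paper on one point: the paper passes from ``$\Psi_\ell$ nontrivial'' to ``differential nonzero'' without comment, which for $\ell=1$ (where $G_1=G$ need not be connected) requires exactly the reduction to $G_0$ that you sketch, using that $G$ is spanned by $G_0$ and $\tau$ so that $[\tau,g]\bmod G_3$ depends only on the $G_0$-part of $g$.
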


\begin{proof}
For $1\leq\ell\leq s-1$, the restriction of the map $\Phi_\tau$ to $G_\ell$ maps this group  to $G_{\ell+1}$.  Composing it with the factor map $G_{\ell+1}\to G_{\ell+1}/G_{\ell+2}$ we obtain a map $\Psi_\ell\colon
G_\ell\to G_{\ell+1}/G_{\ell+2}$. By~\eqref{eq:com_abc}, this map is a group homomorphism.
By Lemma~\ref{lem:tau}, this homomorphism is not trivial.

Now we take differentials  at the unit element $1_G$ of $G$. 
The differential of $\Phi_\tau$ at $1_G$  is $\ad_\tau-\id$, the differential at $1_G$ of the quotient map $G_{\ell+1}\to G_{\ell+1}/G_{\ell+2}$ is the quotient map
$p_\ell\colon \CG_{\ell+1}\to \CG_{\ell+1}/\CG_{\ell+2}$, and thus the differential of $\Psi_\ell$ at this point is $p_\ell\circ(\ad_\tau-\id)$.  
Since the group homomorphism $\Psi_\ell$ is not trivial, its differential at $1_G$ is not zero and thus 
$p_\ell\circ(\ad_\tau-\id)$ is not trivial.

We conclude that $(\ad_\tau-\id)\CG_\ell\not\subset \CG_{\ell+2}$. Since 
$(\ad_\tau-\id)\CG_{\ell+1}\subset \CG_{\ell+2}$, it follows that 
$(\ad_\tau-\id)\CG_\ell\neq(\ad_\tau-\id)\CG_{\ell+1}$.  Thus
$$
\dim\bigl((\ad_\tau-\id)\CG_\ell\bigr)\geq\dim\bigl((\ad_\tau-\id)\CG_{\ell+1}\bigr)+1.
$$
Summing this inequality for $1\leq\ell\leq s-1$, we conclude that
$$\dim\bigl((\ad_\tau-\id)\CG\bigr)\geq s-1.$$
\end{proof}

\subsection{Bounding the complexity}

\begin{notation}
Set  $$
\CK:=\{\xi\in\CG\colon \exp(\xi)\in K\}, 
$$
and note that $\CK$ is a compact subset of $\CG$.
Set
\begin{align*}
\CW_{n}
&:= \bigl\{ \xi\in\CG\colon \norm{\ad_\tau^k\xi}\leq 1\text{ for }0\leq k<n\bigr\}\ ;\\
W_{\varepsilon,n}
& :=\bigl\{\exp(\xi)\colon \xi\in \varepsilon\CW_{n}\bigr\}
\\
&= \bigl\{g\in G\colon d_G(\tau^k g\tau^{-k},1_G)<\varepsilon \text{ for }0\leq k<n\bigr\}
\text{ by~\eqref{eq:defAd} and~\eqref{eq:isometry}.}
\end{align*}
\end{notation}
(In the last line, we can write $g\in G$ instead of $G_0$, as $\varepsilon$ small implies 
that all points of $G$ at a distance $<\varepsilon$ of $1_G$ belong to $G_0$.)

Since the matrix of $\ad_\tau$ is unipotent, by definition~\eqref{eq:defp} of $p$  and Proposition~\ref{prop:lineaire}, we have that
$C\varepsilon^dn^{-p}\leq |\varepsilon\CW_n|\leq C'\varepsilon^dn^{-p}$. 
Thus, since $W_{\varepsilon,n}\subset K$,
by~\eqref{eq:comparemeasures} it follows that:

\begin{corollary}
\label{cor:measuW}
For every  sufficiently small $\varepsilon> 0$  and every $n\in\N$, 
\begin{equation}
C\varepsilon^dn^{-p}\leq \lambda(W_{\varepsilon,n})\leq C'\varepsilon^dn^{-p}.
\end{equation}
\end{corollary}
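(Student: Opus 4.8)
The plan is to transport the entire computation from the group $G$ to its Lie algebra $\CG$, where $W_{\varepsilon,n}$ becomes a linearly defined set to which Proposition~\ref{prop:lineaire} applies directly. First I would observe that, by~\eqref{eq:defAd} and~\eqref{eq:isometry}, the set $W_{\varepsilon,n}$ is exactly $\exp(\varepsilon\CW_n)$; since $\exp$ is a homeomorphism of $\CG$ onto $G_0$ it is in particular injective, so $\{\xi\in\CG\colon\exp(\xi)\in W_{\varepsilon,n}\}=\varepsilon\CW_n$. Because $\varepsilon$ is small, $W_{\varepsilon,n}\subset K$, so the comparison of measures~\eqref{eq:comparemeasures} yields
\[
C\,|\varepsilon\CW_n|\le\lambda(W_{\varepsilon,n})\le C'\,|\varepsilon\CW_n|.
\]
It therefore suffices to estimate the Lebesgue measure $|\varepsilon\CW_n|=\varepsilon^d|\CW_n|$ of the scaled set.

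For this I would invoke Proposition~\ref{prop:lineaire} with $A=\ad_\tau$. The hypothesis is met: since $G$ is $s$-step nilpotent, $(\ad_\tau-\id)^s=0$, so $\ad_\tau$ is unipotent. Before applying the proposition I would check that its exponent agrees with the total commutator dimension $p$ of Definition~\ref{def:com_dim}. The range of $(\ad_\tau-\id)^\ell$ is unchanged under the sign flip to $(\id-\ad_\tau)^\ell$, and every term with $\ell\ge s$ vanishes by nilpotency; hence $\sum_{\ell=1}^{s-1}\dim\range(\ad_\tau-\id)^\ell=\sum_{k=1}^{d-1}\dim\range(\id-\ad_\tau)^k$, so the two definitions of $p$ coincide. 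Proposition~\ref{prop:lineaire} then gives $Cn^{-p}\le|\CW_n|\le C'n^{-p}$, and after the scaling $|\varepsilon\CW_n|=\varepsilon^d|\CW_n|$ and the display above, I obtain $C\varepsilon^dn^{-p}\le\lambda(W_{\varepsilon,n})\le C'\varepsilon^dn^{-p}$, as desired.

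The one genuine subtlety, and the step I expect to require care, is a mismatch in the definition of $\CW_n$: Proposition~\ref{prop:lineaire} constrains $A^k\xi$ only for $1\le k<n$, whereas the set appearing here also imposes the $k=0$ condition $\norm\xi\le 1$. For the upper bound this is harmless, as the extra constraint only shrinks the set. For the lower bound I must verify that it costs at most a bounded factor, and I would do this by unwinding the lower-bound construction of Proposition~\ref{prop:lineaire}: in a Jordan basis for $\ad_\tau$ the set $\CW_n$ is, up to constants, a box whose only non-shrinking sides lie along the eigenvector directions $\ker(\ad_\tau-\id)$; along those directions the single constraint $\norm{\ad_\tau\xi}\le 1$ (the case $k=1$) already bounds $\xi$ by a constant, so intersecting with the unit ball reduces the measure by at most a constant factor. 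Thus both versions of $\CW_n$ have measure comparable to $n^{-p}$, and the estimate of the corollary is unaffected.
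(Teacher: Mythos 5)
Your proof is correct and takes essentially the same route as the paper: identify $W_{\varepsilon,n}=\exp(\varepsilon\CW_n)$, transfer Haar measure to Lebesgue measure via~\eqref{eq:comparemeasures} using $W_{\varepsilon,n}\subset K$, scale by $\varepsilon$, and apply Proposition~\ref{prop:lineaire} to the unipotent matrix $\ad_\tau$. You are in fact more careful than the paper, which passes over both the sign convention in the two definitions of $p$ and the $k=0$ versus $1\le k<n$ mismatch without comment; your patch for the latter is consistent with what is actually proved, since the appendix argument for Proposition~\ref{prop:lineaire} (see Corollary~\ref{cor:measure}) establishes the bounds with the $k=0$ constraint included.
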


\subsubsection{Lower bound for the complexity}
We use this description to prove the lower bound of Theorem~\ref{th:main}.

Assume that $\varepsilon> 0$ is sufficiently small and that $\{x_1,\dots,x_N\}$ is an 
$\varepsilon$-$n$ spanning set for $X$.
Let 
$$
V_j=\{x\in X\colon d_X(T^k x_j,T^k x)<\varepsilon\text{ for }0\leq j<n\}.
$$
By hypothesis,  the union of the sets $V_j$ cover $X$.

For $1\leq j\leq N$, choose $h_j\in D$  such that 
$\pi(h_j)=x_j$.
We are in the setting of Corollary~\ref{cor:summarize}.
For every $j\geq 1$ and every $x\in V_j$, there exists a unique $g_j(x)\in G$ with $x=g_j(x)\cdot x_j=\pi(g_j(x)h_j)$ and 
$d_G(g_j(x),1_G)=d_X(x_j,x)<\varepsilon$.
By~\eqref{eq:conjug}, we have that $d_G(\tau^k g_j(x)\tau^{-k}, 1_G)<\varepsilon$ for $0\leq k<n$. In other words, by definition of the set $W_{\varepsilon,n}$, we have 
that $g_j(x)\in W_{\varepsilon,n}$. It follows that
$$
\pi\bigl(\bigcup_{j=1}^N W_{\varepsilon,n}h_j\bigr)=X.
$$
Therefore, by choice of the normalization of $\lambda$,
$$
1\leq \lambda\bigl(\bigcup_{j=1}^N W_{\varepsilon,n}h_j\bigr)\leq
N\lambda(W_{\varepsilon,n})\leq N C'\varepsilon^dn^{-p}, 
$$
where the inequality follows from Corollary~\ref{cor:measuW}.  
We conclude that $N>C\varepsilon^{-d}n^p$ for some constant $C$. \qed

\subsubsection{Upper bound of the complexity}
\label{subsec:uperbound}
Recall that $D$ is a Borel fundamental domain of the projection $\pi\colon G\to X$. 
 Let $D_1$ be a compact subset of $G$ containing all  points at a distance at most $1$ from $D$.
 We make use of the definition of complexity using $\varepsilon$-$n$ separate sets 
 (Section~\ref{subsec:itrotop}) to show:
 \begin{lemma} 
\label{Lem:coverD}
Assume that $\varepsilon<1$.
There exists a subset $\{h_1,\dots,h_N\}$ of $D$ with 
\begin{equation}
\label{eq:boundN}
N\leq\lambda(D_1)\lambda(W_{\varepsilon/2,n})^{-1}
\end{equation}
 such that 
\begin{equation}
\label{eq:coverD}
D\subset \bigcup_{j=1}^N W_{\varepsilon,n} h_j.
\end{equation}
\end{lemma}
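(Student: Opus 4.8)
I want to cover the fundamental domain $D$ by finitely many translates $W_{\varepsilon,n}h_j$, with the number $N$ of translates controlled by the measure ratio $\lambda(D_1)\lambda(W_{\varepsilon/2,n})^{-1}$. The natural tool is the separated-set reformulation of complexity mentioned in Section~\ref{subsec:itrotop}: a maximal separated family is automatically spanning, and disjointness of small balls around its points gives the volume bound.

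Let me think about what $W_{\varepsilon,n}$ is. It's $\{g\in G : d_G(\tau^k g\tau^{-k},1_G)<\varepsilon \text{ for } 0\le k<n\}$. So membership of $x=g\cdot x_0$ in the spanning-ball $V$ around $x_0$ corresponds (via Corollary~\ref{cor:summarize}) to $g(x)\in W_{\varepsilon,n}$. I want a maximal set of points $h_1,\dots,h_N\in D$ that are "$\varepsilon/2$-$n$ separated" in the sense adapted to this setting — meaning the $W_{\varepsilon/2,n}h_j$ (or rather suitable half-size neighborhoods) are pairwise disjoint.

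So the structure I'd use: pick $\{h_1,\dots,h_N\}\subset D$ maximal subject to the condition that $h_i \notin W_{\varepsilon,n}h_j$ for $i\ne j$ (equivalently $h_i h_j^{-1}\notin W_{\varepsilon,n}$ — note $W_{\varepsilon,n}$ is essentially symmetric since $\ad_\tau^k$ and its behavior are symmetric under inversion up to constants; I should be slightly careful and may want to phrase the separation as $h_ih_j^{-1}\notin W_{\varepsilon,n}$ for all $i\ne j$). Maximality of such a set over the compact-ish domain $D$ guarantees finiteness and, crucially, that the cover condition \eqref{eq:coverD} holds: if some $h\in D$ were in no $W_{\varepsilon,n}h_j$, then $\{h_1,\dots,h_N,h\}$ would still be separated, contradicting maximality. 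That gives \eqref{eq:coverD} for free.

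The remaining point — the measure bound \eqref{eq:boundN} — is the real content. The plan is: because the $h_j$ are pairwise separated, the half-neighborhoods $W_{\varepsilon/2,n}h_j$ are pairwise disjoint. Here I'd use that $W_{\varepsilon/2,n}$ is, up to the symmetry and the subadditivity of the conjugation estimates, "half" of $W_{\varepsilon,n}$: if $g_1 h_i = g_2 h_j$ with $g_1,g_2\in W_{\varepsilon/2,n}$ and $i\ne j$, then $h_i h_j^{-1}=g_1^{-1}g_2$, and applying the defining inequalities to $g_1^{-1}$ and $g_2$ and combining by the triangle inequality (together with right-invariance of $d_G$, used to move $\tau^k(\cdot)\tau^{-k}$ estimates around) would force $h_ih_j^{-1}\in W_{\varepsilon,n}$, contradicting separation. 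Hence the sets $W_{\varepsilon/2,n}h_j$ are disjoint. Moreover each $W_{\varepsilon/2,n}h_j\subset D_1$: since $h_j\in D$ and the elements of $W_{\varepsilon/2,n}$ lie within distance $\varepsilon/2<1$ of $1_G$ (by \eqref{eq:isometry} and the definition, taking $k=0$), right-invariance gives $d_G(gh_j,h_j)=d_G(g,1_G)<1$, so $gh_j$ is within distance $1$ of $D$. Summing the (equal, by right-invariance of Haar measure) measures $\lambda(W_{\varepsilon/2,n}h_j)=\lambda(W_{\varepsilon/2,n})$ over the $N$ disjoint sets contained in $D_1$ yields
$$
N\,\lambda(W_{\varepsilon/2,n}) = \lambda\Bigl(\bigcup_{j=1}^N W_{\varepsilon/2,n}h_j\Bigr)\leq \lambda(D_1),
$$
which is exactly \eqref{eq:boundN}.

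**Main obstacle.** The delicate step is verifying the disjointness of the $W_{\varepsilon/2,n}h_j$ — i.e. showing that two $\varepsilon/2$-neighborhoods collide only if the base points fail the $\varepsilon$-separation. This requires the triangle-inequality combination of the conjugated-distance conditions $d_G(\tau^k g\tau^{-k},1_G)$ and careful use of right-invariance to rewrite $d_G(\tau^k (g_1^{-1}g_2)\tau^{-k},1_G)$ in terms of the individual bounds on $g_1$ and $g_2$. Since $d_G$ is right-invariant, $\tau^k g_1^{-1}g_2\tau^{-k}=(\tau^k g_1^{-1}\tau^{-k})(\tau^k g_2\tau^{-k})$, and one compares the left factor's distance to $1_G$ via $d_G(\tau^k g_1^{-1}\tau^{-k},1_G)=d_G(\tau^k\tau^{-k},\tau^k g_1\tau^{-k})=d_G(1_G,\tau^k g_1\tau^{-k})<\varepsilon/2$; then right-invariance and the triangle inequality give $d_G(\tau^k g_1^{-1}g_2\tau^{-k},1_G)\le d_G(\tau^k g_1^{-1}\tau^{-k},1_G)+d_G(\tau^k g_2\tau^{-k},1_G)<\varepsilon$, so $g_1^{-1}g_2\in W_{\varepsilon,n}$, as needed. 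This is the crux; everything else (finiteness of $N$, the covering property, the containment in $D_1$) follows from standard maximality and right-invariance arguments.
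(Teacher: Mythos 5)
Your proof is correct and is essentially the paper's argument: both are maximal-packing arguments resting on the symmetry of the sets $W_{\cdot,n}$, the inclusion $W_{\varepsilon/2,n}\cdot W_{\varepsilon/2,n}\subset W_{\varepsilon,n}$, right-invariance of $d_G$ and of $\lambda$, and the containment of the translates $W_{\varepsilon/2,n}h_j$ in $D_1$. The only (immaterial) difference is that you take a maximal $W_{\varepsilon,n}$-separated family and then deduce disjointness of the $W_{\varepsilon/2,n}$-translates, whereas the paper takes a maximal family with disjoint $W_{\varepsilon/2,n}$-translates and then deduces the covering property --- dual formulations of the same argument.
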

\begin{proof}
Let $N$ be the maximal cardinality of a subset $\{h_1,\dots,h_N\}$ of $D$ such that the sets $W_{\varepsilon/2,n}h_j$ are disjoint.

Since every element $g\in W_{\varepsilon/2,n}$ satisfies $d_G(g,1_G)<\varepsilon/2<1$, all the subsets $W_{\varepsilon,n} h_j$ are contained in $D_1$. Therefore,
$$
\lambda(D_1)\geq\sum_{j=1}^N \lambda(W_{\varepsilon/2,n}h_j)=N\lambda(W_{\varepsilon,n}).
$$
We claim that the set $\{h_1,\dots,h_N\}$ satisfies~\eqref{eq:coverD}. Assume instead that 
this does not hold and that $h\in D$ does not belong to this union.   It follows immediately 
from the definition that $W_{\varepsilon/2,n}$ is symmetric and that 
$W_{\varepsilon/2,n}\cdot W_{\varepsilon/2,n}\subset W_{\varepsilon,n}$. Therefore, for $1\leq j\leq n$, since 
$h\notin W_{\varepsilon,n}h_j$,  we have that $W_{\varepsilon/2,n}h_j\cap W_{\varepsilon/2,n}h=\emptyset$. 

Setting $h_{N+1}=h$, we have that the set $\{h_1,\dots,h_N,h_{N+1}\}$ satisfies the imposed condition, contradicting the maximality of $N$.
\end{proof}

We now show that the upper bound of Theorem~\ref{th:main} holds, thereby completing the proof.  
Let $\varepsilon>0$ be sufficiently small.
Let $N$ and $\{h_1,\dots,h_N\}$ be defined as in the conclusion of Lemma~\ref{Lem:coverD}.

Let $x_j=\pi(h_j)$ for $1\leq j\leq N$.
 We claim that  $\{x_1,\dots,x_N\}$ is an $\varepsilon$-$n$ spanning set for $X$.
 
 Let $x\in X$ and $h\in D$ be such that $\pi(h)=x$. There exists $j$ with $1\leq j\leq N$ such that 
 $h\in W_{\varepsilon,n}h_j$, meaning that there exists $g\in W_{\varepsilon,n}$ with 
 $h=gh_j$. For $0\leq k<n$, we have
\begin{align*}
d_X(T^kx,T^kx_j)
&=d_X(\pi(\tau^k gh_j),\pi(\tau^kh_j))
\leq d_G(\tau^kgh_j,\tau^kh_j)\\
& =d_G(\tau^kg,\tau^k)
=d_G(1_G,\tau^k g\tau^{-k})<\varepsilon,
\end{align*}
since $g\in W_{\varepsilon,n}$. This proves the claim. 

On the other hand,
by~\eqref{eq:boundN} and Corollary~\ref{cor:measuW} , we have that $N\leq C\varepsilon^{-d}n^p$ for some $C>0$, concluding the proof of Theorem~\ref{th:main}.
 
\section{Some topological applications}
\label{sec:top_app}

As in the ergodic setting, we find classes of systems such  that all factors $Z_{\rm top,s}(X)$, $s \geq 1$, of  
$(X,T)$ are equal to $Z_{\rm top, 1}(X)$.

It follows directly from Corollary~\ref{cor:spectrum} that transitive systems of finite topological rank satisfy this property; in particular, this is the case for minimal substitution dynamical systems and minimal interval exchange transformations.  Namely, let $(X,T)$ be such a system 
with nilfactor $(Y,S)$. Since $(Y,S)$ is transitive, it is  uniquely ergodic (see~\cite{AGH} and~\cite{P}), and its invariant measure is the Haar measure $\nu$ of $Y$. Let $\mu$ be an invariant ergodic measure on $(X,T)$. Then $(X,\mu,T)$ is a  system of measure theoretical finite rank, and the topological factor map $X\to Y$ is also a measure-theoretic factor map.  Thus by Corollary~\ref{cor:spectrum}, $(Y,\nu,S)$ is  measure theoretically isomorphic to a rotation.  By the rigidity properties of nilsystems (see for example~\cite[Appendix A]{HKM}),
$(Y,S)$ is topologically isomorphic to this rotation.

Corollary~\ref{cor:main} can also be used to find other such classes:

\begin{proposition}
\label{prop:symbol}
Let $(X,T)$ be a transitive subshift  and assume that 
$$\liminf_{n\to+\infty}\frac 1n \MC_X(n)<+\infty.$$
Then $(X,T)$ does not admit any topological nilfactor other than  rotations. 
Therefore, for every $s\geq 1$, the topological factor 
$Z_{\text{\rm top},s}(X)$ of $X$ is equal to its topological Kronecker factor $Z_{\text{\rm top},1}(X)$. 

More generally, if for some $s\geq 1$ we have
\begin{equation}
\label{eq:smaalcombinat}
\liminf_{n\to+\infty}\frac 1{n^{s}} \MC_X(n)<+\infty,
\end{equation}
then $(X,T)$ does not admit any  nilsystem as a topological factor that is not an $s$-step nilsystem. Therefore, for every $t\geq s$, the topological factor
$Z_{\text{\rm top},t}(X)$ of $X$ is equal to $Z_{\text{\rm top},s}(X)$.
\end{proposition}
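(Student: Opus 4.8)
The plan is to connect the combinatorial complexity $\MC_X(n)$ of the subshift to its topological complexity $\MS(\varepsilon,n)$, so that a bound on $\MC_X$ translates into a bound on $\MS$ for factors, and then to invoke Corollary~\ref{cor:main}. First I would argue by contradiction: suppose $(X,T)$ admits a topological nilfactor $(Y,S)$ that is \emph{not} an $s$-step nilsystem (in the first part, $s=1$, so ``not a rotation''). Since a factor map is uniformly continuous and surjective, any $\varepsilon$-$n$ spanning set upstairs in $X$ projects to an $\varepsilon'$-$n$ spanning set in $Y$, where $\varepsilon'\to 0$ as $\varepsilon\to 0$; hence $\MS_{Y}(\varepsilon',n)\leq \MS_{X}(\varepsilon,n)$. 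So it suffices to bound the topological complexity of the subshift $X$ itself in terms of $\MC_X$.

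The key step is the observation that for a subshift, the combinatorial complexity essentially \emph{is} the topological complexity at small scale. Equip $A^\Z$ with the standard metric $d(u,v)=2^{-\min\{|k|\,:\,u_k\neq v_k\}}$. Then for $\varepsilon$ small enough (say $\varepsilon<1$), two points agree on coordinates $0,\dots,n-1$ precisely when their length-$n$ words starting at $0$ coincide, and $d(T^ku,T^kv)<\varepsilon$ for all $0\leq k<n$ forces agreement on a window of length $n$ (up to a fixed additive constant absorbed into $n$). Choosing one representative point of $X$ for each length-$n$ word occurring in $X$ therefore produces an $\varepsilon$-$n$ spanning set, so $\MS_X(\varepsilon,n)\leq \MC_X(n)$ for all such $\varepsilon$. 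Combining this with the factor inequality gives
\begin{equation}
\label{eq:compare-complexities}
\MS_{Y}(\varepsilon',n)\leq \MS_{X}(\varepsilon,n)\leq \MC_X(n)\quad\text{for all }n\geq 1
\end{equation}
for suitable $\varepsilon'=\varepsilon'(\varepsilon)\to 0$.

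Now I would feed this into Corollary~\ref{cor:main}. Under hypothesis~\eqref{eq:smaalcombinat} we have $\liminf_{n}\MC_X(n)/n^s<+\infty$, so by~\eqref{eq:compare-complexities} the same $\liminf$ bound holds for $\MS_Y(\varepsilon',n)/n^s$, uniformly as $\varepsilon'\to 0$; in particular $\liminf_n \MS_Y(\varepsilon',n)/n^s$ does \emph{not} tend to $+\infty$ as $\varepsilon'\to 0$. But Corollary~\ref{cor:main} asserts that if $(Y,S)$ is a minimal nilsystem that is not an $s$-step nilsystem, then $\liminf_n \MS_Y(\varepsilon',n)/n^s\to+\infty$ as $\varepsilon'\to 0$. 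This contradiction shows $(Y,S)$ must be an $s$-step nilsystem, proving the claim; the statement about $Z_{\text{\rm top},t}(X)=Z_{\text{\rm top},s}(X)$ for $t\geq s$ then follows since every topological nilfactor of $X$ is at most $s$-step, and the inverse limit defining $Z_{\text{\rm top},t}(X)$ stabilizes.

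The main obstacle I anticipate is twofold. First, Corollary~\ref{cor:main} requires \emph{minimality} of the nilsystem $(Y,S)$, whereas a priori the factor of a transitive subshift is only transitive; here I would use the fact (cited in Section~\ref{sec:top_app} via~\cite{AGH,P}) that a transitive nilsystem is automatically minimal. Second, one must be careful that the metric-distortion constant relating $\varepsilon$ and $\varepsilon'$ under the factor map, and the additive window-length shift relating $\MS_X$ and $\MC_X$, are genuinely \emph{independent of $n$}; this is what legitimizes passing the $n\to\infty$ $\liminf$ through the inequalities while taking $\varepsilon'\to 0$ afterward. Both are routine but need the quantifiers ordered correctly, taking the $\liminf$ in $n$ first at fixed $\varepsilon'$ and only then letting $\varepsilon'\to 0$.
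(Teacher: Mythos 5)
Your proposal is correct and follows essentially the same route as the paper: both bound the topological complexity of the nilfactor $(Y,S)$ by the combinatorial complexity of the subshift (using uniform continuity of the factor map, with the window-length constant $L(\varepsilon)$ independent of $n$) and then contradict Corollary~\ref{cor:main}, with minimality of the factor supplied by transitivity of nilsystems. The only difference is cosmetic: you pass through the intermediate inequality $\MS_Y(\varepsilon',n)\leq\MS_X(\varepsilon,n)\leq\MC_X(n+2L)$, whereas the paper builds the $\varepsilon$--$n$ spanning set of $Y$ directly from the words of length $n+2L+1$ of $X$ in a single step.
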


\begin{proof}
 The last statement follows immediately from the fact that $Z_{\text{\rm top},s}(X)$ is the inverse limit of all $s$-step topological nilfactors of $(X,T)$. 
 
Assuming~\eqref{eq:smaalcombinat}, it suffices to show that
there is no 
topological factor map $\phi\colon (X,T)\to(Y,S)$, where $(Y,S)$ is a minimal $(s+1)$-step nilsystem that is not an $s$-step nilsystem.
Indeed, any minimal nilsystem that is not an $s$-step nilsystem admits an $(s+1)$-step nilsystem
as a factor that is not an $s$-step nilsystem. 

Assume instead that such a factor map $\phi\colon (X,T)\to(Y,S)$  exists and let $d_Y$ be a distance on $Y$ defining its topology.
Recall that $(X,T)$ is a transitive subshift on the finite alphabet $A$.  We write $x\in X$  as $x=(x_n\colon n\in\Z)$, and if $I\subset\Z$ is a finite interval, then we write
$x_I$ for the finite sequence $(x_n\colon n\in I)$.

Let $\varepsilon>0$. Since $\phi$ is continuous, there exists an integer $L=L(\varepsilon)>0$ such that 
$d_{Y}(\phi(x),\phi(y))<\varepsilon$ whenever $x,y\in X$ satisfy $x_{[-L,L]}=y_{[-L,L]}$.
Therefore, if $x,y\in X$ satisfy $x_{[-L,n+L]}=y_{[-L,n+L]}$ for some $n\geq 1$, then $d_{Y}(T^k\phi(x),T^k\phi(y))<\varepsilon$ for every $k\in\{0,\dots, n\}$.

Fix $n\geq 1$ and set $m=\MC_X(n+2L+1)$. 

By definition, there exist $m$ elements $x^{(1)}, \dots, x^{(m)}$ of $X$ such that for every $x\in X$, there exists $j\in\{1,\dots,m\}$ with $x_{[-L,n+L]}=x^{(j)}_{[-L,n+L]}$.
By definition of $L$ and since $\phi$ is onto, the set $\{\phi(x^{(j)})\colon 1\leq j\leq m\}$ is an $\varepsilon$-$n$ spanning set of $(Y,S)$. 
Thus
$$
\MS_{Y,S,d_{Y}}(\varepsilon,n)\leq \MC_X(n+2L+1) .
$$
Since this holds for every $n\geq 1$, it follows from the hypothesis that
$$
\liminf_{n\to+\infty}\frac 1{n^s} \MS_{Y,S,d_{Y}}(\varepsilon,n)\leq 
\liminf_{n\to+\infty} \frac 1{n^s} \MC_X(n)<+\infty.
$$
But this contradicts the statement of Corollary~\ref{cor:main}.
\end{proof}

\begin{remark}
Under the  hypothesis of ``linear complexity,'' that is, that there exists a constant $c>0$ such that $\MC_X(n)\leq cn$ for every $n\in\N$, the first statement of 
Proposition~\ref{prop:symbol}
 can also be deduced from Corollary~\ref{cor:spectrum}, by the method discussed 
at the beginning of this section;
in this case, the system $(X,T)$ has topological finite rank~\cite{F3}.
\end{remark}

\appendix

\section{Proof of Proposition~\ref{prop:lebesgue}}
\label{sec:prooflebesgue}

For convenience, we repeat the statement of Proposition~\ref{prop:lebesgue}:
\begin{proposition*}
Let $(X=G/\Gamma,\mu,T)$ be an 
ergodic $2$-step nilsystem that is not a rotation. Then $L^2(\mu)$ can be written as the orthogonal sum $L^2(\mu)=\CH\oplus\CH'$ of two closed $T$-invariant subspaces such that the restriction of $T$ to $\CH$ has discrete spectrum and its restriction to $\CH'$ has Lebesgue spectrum of infinite multiplicity.
\end{proposition*}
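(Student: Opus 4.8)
The plan is to decompose $L^2(\mu)$ explicitly using the representation theory of the $2$-step nilsystem, exploiting the fact that the group $G_2$ is central. Since $(X=G/\Gamma,\mu,T)$ is an ergodic $2$-step nilsystem that is not a rotation, we have $G_2\neq\{1_G\}$ and $G_2$ is a nontrivial compact connected abelian group. I would first use the central subgroup $G_2$ to obtain a fibration $X=G/\Gamma\to G/G_2\Gamma$, whose base is the maximal factor on which $T$ acts as a group rotation; this base factor is exactly the Kronecker factor. Correspondingly, I would decompose $L^2(\mu)$ according to the action of $G_2$: writing $\wh{G_2}$ for the dual of the (compact abelian, since $G_2\cap\Gamma$ is cocompact) group $G_2/(G_2\cap\Gamma)$, every $f\in L^2(\mu)$ expands as a sum over characters $\chi\in\wh{G_2}$ of its isotypic components $f_\chi$, where $f_\chi(gz\cdot x)=\chi(z)f_\chi(g\cdot x)$ for $z\in G_2$. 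The subspace $\CH$ is defined to be the $\chi=1$ (trivial character) component, which consists precisely of functions factoring through $G/G_2\Gamma$; by construction this is $T$-invariant and $T$ restricted to it is the Kronecker rotation, hence has discrete spectrum. I would then set $\CH'$ equal to the orthogonal sum of all $f_\chi$ with $\chi\neq 1$.

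The core of the argument is to show that the restriction of $T$ to each nontrivial isotypic subspace $\CH_\chi$ has \emph{Lebesgue} spectrum, and that the multiplicity is infinite. For a fixed nontrivial character $\chi$, I would analyze how $T$ (translation by $\tau$) acts on $\CH_\chi$ by combining the translation on the base Kronecker factor with the cocycle that records the commutator twisting. Concretely, using that $G_2$ is central and the commutator map is bilinear modulo $G_2$ (as in~\eqref{eq:com_abc}), translation by $\tau$ acts on a section of the line-bundle-like subspace $\CH_\chi$ as a weighted rotation whose phase involves $\chi([\tau,\cdot])$. Because $(X,T)$ is \emph{not} a rotation, Lemma~\ref{lem:tau} (with $\ell=1$) guarantees $[\tau,G_1]\not\subset G_3=\{1_G\}$, i.e. $\tau$ does not commute with $G$, so this twisting is genuinely nontrivial for a cofinal family of characters $\chi$. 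I would show that this forces the associated unitary operator on $\CH_\chi$ to be unitarily equivalent to a bilateral shift (equivalently, that a natural generating vector has a spectral measure equal to Lebesgue measure on $\T$), by exhibiting an orthonormal basis on which $T$ acts as translation of the index, or by computing that the correlation sequence $\langle T^n f,f\rangle$ vanishes for all $n\neq 0$.

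The main obstacle, and the reason the proof is ``elementary but lengthy,'' will be handling the non-connected and non-simply-connected structure of $G$ carefully enough to make the Fourier/isotypic decomposition rigorous and to verify the \emph{infinite} multiplicity. For the Lebesgue spectrum I expect the key technical step to be identifying, for each nontrivial $\chi$, an orbit of characters under the affine action induced by $\ad_\tau$ on $\wh{G_2}$ (or on the fibered Fourier modes over the Kronecker base) that is infinite; the nontriviality of the commutator from Lemma~\ref{lem:tau} ensures these orbits do not close up, yielding infinitely many mutually orthogonal cyclic subspaces each carrying Lebesgue spectrum, so the total multiplicity on $\CH'$ is infinite. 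I would organize $\CH'$ as a direct integral (or direct sum, over the discrete part of $\wh{G_2}$) of these cyclic pieces and check $T$-invariance of $\CH'$ as the orthogonal complement of $\CH$, completing the orthogonal decomposition $L^2(\mu)=\CH\oplus\CH'$ with the asserted spectral types.
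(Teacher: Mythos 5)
Your starting point coincides exactly with the paper's: the isotypic decomposition $L^2(\mu)=\bigoplus_{\chi}\CH_\chi$ under the central action of $G_2$ (really of the torus $G_2/(G_2\cap\Gamma)$), with $\CH=\CH_1$ the functions factoring through $G/G_2\Gamma$ and $\CH'$ the sum of the nontrivial components, is how the paper's Appendix~\ref{sec:prooflebesgue} begins. The first genuine gap is your hedge that the commutator twisting is nontrivial ``for a cofinal family of characters $\chi$.'' Since $\CH'$ is the sum over \emph{all} nontrivial $\chi$, you must prove Lebesgue spectrum on $\CH_\chi$ for every one of them, and this is where real work is needed. Indeed, if some nontrivial $\chi$ annihilated the closed subgroup of $G_2$ generated by $[\tau,G]$, then every $f\in\CH_\chi$ would be invariant under $\ker\chi$ and would factor through the quotient system $X/\ker\chi$, on which the transformation is translation by a \emph{central} element of the quotient group; by minimality such a factor is a rotation, so $\CH_\chi$ would have \emph{discrete} spectrum and your decomposition would fail. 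Your citation of Lemma~\ref{lem:tau} only yields $[\tau,G]\neq\{1_G\}$, i.e.\ nontriviality ``somewhere''; what is needed is that the group generated by $[\tau,G_0]$ is \emph{dense} in $G_2$, so that every nontrivial character sees the twisting. The paper obtains this density only after a preliminary reduction (replace $\Gamma$ by a presentation containing no nontrivial normal subgroup of $G$, which makes the action faithful, $\Gamma$ abelian and $G_2$ a torus); nothing in your outline plays this role, and several of your assertions (e.g.\ that $G_2$ itself is compact and connected) already implicitly presuppose it.

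The second gap is the core spectral step, which you assert rather than prove: that $T$ restricted to $\CH_\chi$ is a bilateral shift, ``by exhibiting an orthonormal basis on which $T$ acts as translation of the index, or by computing that the correlation sequence vanishes.'' This is precisely the classical mechanism (Anzai-type mode orbits, Weil--Brezin/theta bases) used by Green and Stepin for connected, simply connected $G$, and it is precisely the step the paper says does not adapt to non-connected groups --- the stated reason for giving a new proof. The paper's actual argument is genuinely different and softer: for $f\in\CH_\chi$ of class $\CC^k$ it writes $\wh{\sigma_f}(n)=\wh{h_n}(\theta_n^{-1})$, where the $h_n$ lie in a compact subset of $\CC^k(Z_1)$ and the characters $\theta_n\in\wh{Z_1}$ take each value at most $|\Lambda|$ times (finiteness of $\Lambda$ again uses the density above); this gives $\sum_n|\wh{\sigma_f}(n)|<+\infty$, hence only \emph{absolute continuity}. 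Equivalence to Lebesgue measure is then obtained by a separate argument with no counterpart in your proposal: $\CH_\chi$ is invariant under multiplication by $\theta\circ p$ for $\theta\in\wh{Z_1}$, so the maximal spectral type of $\CH_\chi$ is quasi-invariant under the eigenvalue group, which is dense in $\T$, and an absolutely continuous measure quasi-invariant under a dense group of rotations is equivalent to $m_\T$. Infinite multiplicity then comes for free from the infinitude of nontrivial characters, one Lebesgue component per $\chi$, rather than from your (unproven) infinite orbits of fibered Fourier modes. In short, your decomposition and heuristic are the right ones, but the proposal is missing the reduction/density input needed to cover every $\chi$, and it offers no workable substitute for the paper's two-step spectral argument at exactly the point where the non-connected case is hard.
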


\subsection{Reductions and notation}

Before proving the proposition, we make some usual reductions that present the nilmanifold
in a standard way. Note that these reductions are not the same as those of Section~\ref{subsec:nilsystems}.

Let $(X=G/\Gamma,\mu,T)$ be an 
ergodic $2$-step nilsystem that is not a rotation.
As in Section~\ref{subsec:nilsystems}, we write $\tau\in G$ for the element defining the transformation $T$ and $G_0$ for  the connected component 
of the identity. By minimality we can assume that the subgroup spanned by $G_0$ and $\tau$ is dense in $G$.  This implies that $G_2$ is connected and thus included in $G_0$.

Let $\Gamma'$ be the largest normal subgroup of $G$ contained in  $\Gamma$. By substituting $G/\Gamma'$ for $G$ and $\Gamma/\Gamma'$ for $\Gamma$, we reduce to the case that $\Gamma$ does not contain any normal subgroup of $G$. 
Thus the action of $G$ on $X$ is faithful and it follows that $\Gamma$ 
is abelian and that $G_2$ is compact.  Since this Lie group is abelian and connected,   
it is a finite dimensional torus.
 Furthermore, it follows that each 
of subgroups spanned by $[\tau,\Gamma]$, by $[\tau,G]$ and  by $[\tau,G_0]$ is dense in $G_2$.

Let $q\colon G\to G/G_2$ and $\pi\colon G\to X$  denote the quotient maps. 
We recall   (see~\cite{P} and~\cite{L2}) that the Kronecker factor of $X$ is 
the compact abelian Lie Group $Z_1=G/G_2\Gamma$, endowed with its Lebesgue measure 
$m_{Z_1}$
 and translation by $\alpha=p\circ\pi(\tau)$, where 
$p\colon X\to Z_1$ denotes the factor map.

We remark that for every $a\in G$, the map $g\mapsto [a,g]$ is a group homomorphism from $G$ to $G_2$ (see Section~\ref{subsec:nilsystems}) and that the kernel of this homomorphism contains $G_2$.

\subsection{Lebesgue spectrum}
For $\chi\in\wh{G_2}$, set 
$$
\CH_\chi
=\bigl\{f\in L^2(\mu)\colon f(u\cdot x)=\chi(u)f(x)\text{ for every }u\in G_2\text{ and $\mu$-a.e. $x$}\bigr\}.
$$
Each space $\CH_\chi$ is invariant under  $T$ and $L^2(\mu)$ is the orthogonal sum of these spaces.  We have that  $\CH_1$ is the space of functions that factorize through $Z_1$,  and 
each  space 
$\CH_\chi$ is invariant under multiplication by functions belonging to $L^\infty(\mu)\cap\CH_1$. 

We show: 
\begin{lemma}
\label{lem:lebesgue1}
Let $\chi$ be a nontrivial character of $G_2$.  Then the spectral measure 
associated to any  function in $\CH_\chi$ is absolutely continuous with respect to  the
Lebesgue measure $m_\T$ of $\T$.
\end{lemma}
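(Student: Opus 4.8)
The plan is to exhibit, for every $a\in G_0$, a unitary operator $V_a$ of $\CH_\chi$ that intertwines $T$ with a scalar multiple of itself, to arrange that the scalars fill out the whole circle $\T$, and then to conclude that the maximal spectral type of $T$ on $\CH_\chi$ is a rotation-invariant measure class, which must be absolutely continuous.

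First I would define, for $a\in G_0$, the operator $V_af(x)=f(a^{-1}\cdot x)$. Since $\mu$ is invariant under the action of $G$, $V_a$ is unitary on $L^2(\mu)$, and it preserves $\CH_\chi$: because $G_2$ is central in $G$ (as $G$ is $2$-step nilpotent), for $u\in G_2$ we have $V_af(u\cdot x)=f(a^{-1}u\cdot x)=f(ua^{-1}\cdot x)=\chi(u)V_af(x)$. Next I would compute the commutation relation. Writing $c=[\tau^{-1},a]=\tau^{-1}a\tau a^{-1}\in G_2$, one checks that $a\tau a^{-1}=\tau c$, so that
$$
V_aTV_a^{-1}f(x)=f(a\tau a^{-1}\cdot x)=f(\tau c\cdot x)=\chi(c)f(\tau\cdot x),
$$
using once more that $c$ is central. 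Hence $V_aTV_a^{-1}=\lambda_a\,T$ on $\CH_\chi$, with $\lambda_a=\chi\bigl([\tau^{-1},a]\bigr)$.

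The crucial step is to show that $\{\lambda_a:a\in G_0\}$ is all of $\T$. By identity~\eqref{eq:com_abc} and the centrality of $G_2$, the map $a\mapsto[\tau^{-1},a]$ is a continuous homomorphism from $G$ to $G_2$; composing with the character $\chi$, the map $\psi\colon a\mapsto\lambda_a$ is a continuous homomorphism from the \emph{connected} group $G_0$ to $\T$. Its image is therefore a connected subgroup of $\T$, so it is either trivial or all of $\T$. It cannot be trivial: the subgroup $\{[\tau^{-1},a]:a\in G_0\}$ coincides with $\{[\tau,a]^{-1}:a\in G_0\}$ (substitute $a\mapsto\tau a\tau^{-1}$) and hence is dense in $G_2$, since $[\tau,G_0]$ spans a dense subgroup of $G_2$ by the reduction recalled above; as $\chi$ is nontrivial, $\psi$ does not vanish identically. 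Thus $\psi(G_0)=\T$.

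Finally I would translate this into the spectral statement. Let $\sigma$ be the maximal spectral type of the restriction of $T$ to $\CH_\chi$. For each $a$ the unitary $V_a$ conjugates $T|_{\CH_\chi}$ to $\lambda_aT|_{\CH_\chi}$, so these two operators are unitarily equivalent and $\sigma$ is equivalent to its rotate by the angle of $\lambda_a$; since these angles fill $\T$, the measure class of $\sigma$ is invariant under every rotation of $\T$. To conclude that $\sigma\ll m_\T$ I would convolve with Haar measure: $\sigma*m_\T=\sigma(\T)\,m_\T$, so if $m_\T(E)=0$ then $\int_\T\sigma(E-t)\,dm_\T(t)=0$, whence $\sigma(E-t_0)=0$ for some $t_0$, and rotation invariance of the class gives $\sigma(E)=0$. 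Therefore $\sigma$, and with it the spectral measure of every $f\in\CH_\chi$, is absolutely continuous with respect to $m_\T$. I expect the main obstacle to be exactly the third step: upgrading the family of available rotations from a merely dense set of angles to the full circle, since only the latter rules out a singular (in particular atomic) component of the spectral type — a dense subgroup of rotations alone is compatible with point spectrum. This is where the connectedness of $G_0$ together with the density of $[\tau,G_0]$ in $G_2$ is indispensable.
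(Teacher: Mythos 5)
Your proof is correct, but it takes a genuinely different route from the paper's. The paper argues Fourier-analytically: for $f\in\CH_\chi\cap\CC^k(X)$ it writes $\tau^n=g_n\gamma_n$ with $g_n$ in a fixed compact set and $\gamma_n\in\Gamma$, expresses $\wh{\sigma_f}(n)$ as a single Fourier coefficient $\wh{h_n}(\theta_n^{-1})$ of a function $h_n$ on the Kronecker factor $Z_1$, shows that each $\theta\in\wh{Z_1}$ can occur as $\theta_n$ for only a bounded number of $n$ (finiteness of the set $\Lambda$), and deduces $\sum_n|\wh{\sigma_f}(n)|<+\infty$; absolute continuity on all of $\CH_\chi$ then follows by density. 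You instead exploit the symmetry group: the translation unitaries $V_a$, $a\in G_0$, preserve $\CH_\chi$ and conjugate $T$ to $\chi([\tau^{-1},a])\,T$, and since $a\mapsto\chi([\tau^{-1},a])$ is a continuous homomorphism on the \emph{connected} group $G_0$ which cannot be trivial (by the density in $G_2$ of the group spanned by $[\tau,G_0]$, part of the paper's standing reductions), the scalars fill out all of $\T$, so the maximal spectral class on $\CH_\chi$ is invariant under every rotation and must be absolutely continuous. All your steps check out: the intertwining computation, the identification of $\{[\tau^{-1},a]\colon a\in G_0\}$ with $\{[\tau,a]^{-1}\colon a\in G_0\}$ (which uses normality of $G_0$), the trivial-or-full dichotomy for connected subgroups of $\T$, and the convolution argument; and you rightly flag that a merely dense group of rotations would not suffice, which is exactly the pitfall your use of connectedness avoids. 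Your route is shorter and buys more: it shows the maximal spectral type on $\CH_\chi$ is actually \emph{equivalent} to $m_\T$, so the quasi-invariance argument under the dense group of eigenvalues that the paper runs afterwards in the proof of Proposition~\ref{prop:lebesgue} becomes unnecessary (only the mutual orthogonality of the spaces $\CH_\chi$ is still needed for infinite multiplicity). What the paper's computation buys in exchange is quantitative information: for $\CC^k$ functions in $\CH_\chi$ the spectral measure has an absolutely summable Fourier series, hence a continuous density, which is strictly stronger than absolute continuity.
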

\begin{proof}
 Let $\chi$ be a nontrivial character of $G_2$.

The group homomorphism $g\mapsto \chi[g,\tau]$ factorizes through $G/G_2$ and so there exists a character $\widetilde\chi$ of $G/G_2$ satisfying
\begin{equation}
\label{eq:deftildechi}\chi[g,\tau]=\widetilde\chi\circ q(g)\text{ for every }g\in G.
\end{equation}

Let $\gamma\in\Gamma$. Since $\gamma$ commutes with $\Gamma$ and with $G_2$, the map $g\mapsto[\gamma,g]$ factorizes through the Kronecker factor $Z_1$.  Thus there exists a group homomorphism $\wh\gamma\colon Z_1\to G_2$ such that  
\begin{equation}
\label{eq:defwhgamma}
\wh\gamma\circ p\circ\pi(g)=[\gamma,g]\text{ for every }g\in G.
\end{equation} 

Let $K$ be a compact subset of $G$ such that the restriction to $K$ of the projection $\pi\colon G\to X$ is onto.
For every $n\in\Z$, we choose 
$\gamma_n$ and $g_n$ with  
\begin{equation}
\label{eq:defgn}
\gamma_n\in\Gamma,\ g_n\in K\text{ and }\tau^n=g_n\gamma_n.
\end{equation}

The family $L$ of characters $\widetilde\chi_{m,n}$ of $G/G_2$ defined by $\widetilde\chi_{m,n}\circ q(g)=\chi[g,g_ng_m^{-1}]$ is a bounded subset of   $\wh{G/G_2}$.
But the closed subgroup of $\wh{G/G_2}$ spanned by $\widetilde\chi$ is not compact; 
if not, the restriction of $\widetilde\chi$
to some open subgroup $H$ of $G/G_2$ would be trivial and so 
the restriction of $\widetilde\chi\circ q$ to $G_0$ would also be trivial, 
implying that $\chi([g,\tau])=1$ for every $g\in G_0$ and $\chi$ would be  trivial 
(again, the subgroup spanned by $[\tau, G_0]$ is dense in $G_2$), a contradiction.
Thus this group is discrete and in particular the set 
$$
\Lambda:=\{k\in\Z\colon \widetilde\chi^k\in L\}
$$
is finite. 

For every $n\in\Z$, write
\begin{equation}
\label{eq:defthetan}
\theta_n =\chi\circ\wh{\gamma_n}.
\end{equation}
We claim that for every $\theta\in\wh{Z_1}$,
\begin{equation}
\label{eq:cardLambda} 
\bigl|\{ n\in\Z\colon \theta_n=\theta\}\bigr|\leq|\Lambda|.
\end{equation}
To check this,  let $m$ and $n$ be integers with  $\theta_m=\theta_n$. By definitions~\eqref{eq:defthetan} and~\eqref{eq:defwhgamma}, for every $g\in G$ we have
$\chi([\gamma_m,g])=\chi([\gamma_n,g])$ and, by the choice~\eqref{eq:defgn} of $\gamma_n$ and $g_n$, we have
 $\chi(\tau^{m-n},g])=\chi([g_ng_m^{-1},g])=\widetilde\chi_{m,n}\circ q(g)$. By~\eqref{eq:deftildechi}, we conclude that $\widetilde\chi^{m-n}=\widetilde\chi_{m,n}\in\Lambda$ and the claim follows.

Let $f\in \CH_\chi$ be a function 
belonging to  the space $\CC^k(X)$ of $k$-times differentiable functions on $X$, for some $k$ to be defined later.
Note that  for every $n\in\Z$, the function  $x\mapsto f(g_n\cdot x)\cdot\overline{f(x)}$ belongs to the space 
$\CH_1$ and can be written as $h_n\circ p$ for some function $h_n$ on $Z_1$. 
 Since all the elements $g_n$ belong to the compact subset $K$ of $G$ and the action of $G$ by translation on $\CC^k(X)$ is continuous (with respect to the usual topology of $\CC^k(X)$),  all functions $x\mapsto f(g_n\cdot x)\cdot\overline{f(x)}$ belong to some compact subset of $\CC^k(X)$.  It follows that all the functions $h_n$ belong to some compact subset of $\CC^k(Z_1)$. Taking $k$ to be sufficiently large,  this implies that
\begin{equation}
\label{eq:sumsuptheta}
\sum_{\theta\in\wh{Z_1}}\sup_{n\in\Z}\bigl|\wh{h_n}(\theta)\bigr|<+\infty.
\end{equation}

The Fourier-Stieljes transform of the spectral measure $\sigma_f$ of $f$ is given by:
\begin{align*}
\wh{\sigma_f}(n)
&:=
\int f(T^nx)\cdot \overline{f(x)}\,d\mu(x)
=\int f(g_n\gamma_n\cdot x)\cdot \overline{f(x)}\,d\mu(x)
\text{ by~\eqref{eq:defgn}}\\
&=\int f(g_n\wh{\gamma_n}\circ p(x)\cdot x)\cdot \overline{f(x)}\,d\mu(x)
\text{ by~\eqref{eq:defwhgamma}}\\
&
=\int\chi\circ\wh{\gamma_n}\circ p(x)\cdot f(g_n\cdot x)\cdot \overline{f(x)}\,d\mu(x)
\text{ by the definition of $\CH_\chi$}\\
&=\int_{Z_1} \theta_n(z)
h_n(z)\,dm_{Z_1}(z)\text{ by~\eqref{eq:defthetan} and the definition of $h_n$.}
\end{align*}

We deduce:

\begin{align*}
\sum_{n\in\Z}\bigl|\wh{\sigma_f}(n)\bigr|
&=\sum_{n\in\Z}\bigl|\wh{h_n}(\theta_n^{-1})|\\
&\leq 
\sum_{\theta\in\wh{Z_1}}\bigl|\{n\in\Z\colon \theta_n=\theta\}\bigr|\,
\sup_{n\in\Z}\bigl|\wh{h_n}(\theta)\bigr|\\
&\leq |\Lambda|\sum_{\theta\in\wh{Z_1}}\sup_{n\in\Z}\bigl|\wh{h_n}(\theta)\bigr|<+\infty
\text{ by~\eqref{eq:cardLambda} and~\eqref{eq:sumsuptheta}.} 
\end{align*}

Therefore, the spectral measure of $f$ is absolutely continuous with respect to 
the Lebesgue measure $m_\T$ of $\T$.  By density, this property extends to every function in $\CH_\chi$.
\end{proof}

We use this to complete the proof of Proposition~\ref{prop:lebesgue}.
\subsection{End of the proof of Proposition~\ref{prop:lebesgue}}
For $\chi=1$, note that a function belonging to $\CH_1$ has discrete spectral measure.

By ergodicity, $\{\alpha^n\colon n\in\Z\}$ is dense in $Z_1$. The group of eigenvalues of $(X,T)$ is  $E:=\{\theta(\alpha)\colon\theta\in\wh Z\}$. Since $Z_1$ admits a (nontrivial) torus as an open subgroup, $E$ is dense in $\T$.

Let $\chi$ be a nontrivial character of $G_2$. For every $\theta\in\wh{Z_1}$,   the space $\CH_\chi$ is invariant under multiplication by the function $\theta\circ p$, as 
this function belongs to $L^\infty(\mu)\cap\CH_1$. For $f\in\CH_\chi$, the spectral measure of $f\cdot \theta\circ p$ is equal to the image of $\sigma_f$ under translation by $\theta(\alpha)$. Therefore, the maximal spectral type $\sigma_\chi$ of the restriction of $T$ to $\CH_\chi$ is quasi-invariant under translation by 
$\lambda$ for every $\lambda\in E$, meaning that if $A\subset \T$ satisfies $\sigma_\chi(A)=0$, then $\sigma_\chi(A+\lambda)=0$.  But $\sigma_\chi$ is absolutely continuous with respect to Lebesgue measure and so $\sigma_\chi$ is equivalent to Lebesgue measure.  It follows that there exists a function $f_\chi\in\CH_\chi$ such that $\sigma_{f_\chi}=m_\T$.

On the other hand, the invariant spaces $\CH_\chi$, for $\chi\neq 1\in\wh{G_2}$,  are mutually orthogonal, completing the proof.
\qed

\section{Proof of Proposition~\ref{prop:lineaire}}
\label{ap:linear}

For convenience, we repeat the statement of Proposition~\ref{prop:lineaire}:
\begin{proposition*}
Let $\R^d$ be endowed with the Euclidean norm $\norm\cdot$ and let
the Lebesgue measure of a Borel subset $K$ of $\R^d$ be written $|K|$. 
Let $A$ be a $d\times d$ matrix and assume that it is unipotent, meaning that $(\id-A)^d=0$. For every integer $n\geq 2$, let 
$$
\CW_n=\bigl\{ \xi\in\R^d\colon \norm{A^k\xi}\leq 1\text{ for }1\leq k<n\bigr\}.
$$
If
$$
p=\sum_{k=1}^{d-1}\dim(\range(\id-A)^k), 
$$
there exist positive constants $C$ and $C'$ (depending on $d$ and on $A$) such that
$$
C n^{-p} \leq |\CW_n|\leq C'n^{-p}
$$
for every $n$.
\end{proposition*}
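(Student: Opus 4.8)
The plan is to exploit the Jordan structure of the unipotent matrix $A$, reducing the estimate to a computation on a single Jordan block, where the statement becomes one about polynomials bounded on an arithmetic progression of integers.

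First I would note that the quantity $|\CW_n|$ behaves well under two harmless modifications. Since the conditions $\norm{A^k\xi}\le 1$ are homogeneous of degree one in $\xi$, replacing the Euclidean norm by any equivalent norm changes $|\CW_n|$ by a factor depending on $d$ and $A$ but \emph{not} on $n$; likewise, conjugating $A$ by a fixed invertible $S$ is a linear change of variables of constant Jacobian combined with a change of norm. Hence I may put $A$ into real Jordan form --- being unipotent, it is a direct sum of standard unipotent blocks $J_r=\id+N_r$ of sizes $r_1,\dots,r_m$, with $N_r$ the nilpotent shift --- and use the norm given by the maximum of the Euclidean norms of the block components. With this norm $A^k$ respects the block decomposition, so $\CW_n$ is exactly the product of the corresponding sets for the individual blocks and $|\CW_n|$ is the product of their measures. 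A short computation using $\range((\id-A)^k)=\range(N^k)$ and $\operatorname{rank}(N^k)=\sum_b\max(r_b-k,0)$ shows that the exponent factorizes as $p=\sum_b\binom{r_b}{2}$, so it suffices to prove $Cn^{-\binom r2}\le|\CW_n|\le C'n^{-\binom r2}$ for a single block of size $r$.

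For a single block with coordinates $\xi=(c_1,\dots,c_r)$ and $N e_j=e_{j-1}$, the $i$-th coordinate of $A^k\xi$ is the polynomial $P_i(k)=\sum_{j\ge i}\binom{k}{j-i}c_j$, of degree $r-i$ in $k$ with leading coefficient $c_r/(r-i)!$. For the lower bound I would exhibit an explicit box: if $|c_j|\le\varepsilon\,n^{-(j-1)}$ for all $j$, then a term-by-term estimate of $P_i(k)$ for $1\le k<n$ gives $\norm{A^k\xi}\lesssim\varepsilon$, so for $\varepsilon$ small (depending only on $r$) this box lies in $\CW_n$, and its volume is $\asymp\prod_j n^{-(j-1)}=n^{-\binom r2}$.

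The upper bound is the crux. Here I would discard all but the bottom coordinate, keeping only $|P_1(k)|\le 1$ for $1\le k<n$, where $P_1$ has degree $r-1$ and is bounded by $1$ on the $n-1$ integer points of $[1,n-1]$. The key analytic input is the elementary fact that a degree-$m$ polynomial bounded by $B$ on $m+1$ integer nodes spread across an interval of length $\asymp n$ has leading coefficient $\lesssim B\,n^{-m}$; this follows from the divided-difference (Lagrange interpolation) formula for the leading coefficient, the essential point being to use \emph{widely spaced} nodes, since consecutive finite differences only yield an $O(1)$ bound. Applying this to $P_1$ gives $|c_r|\lesssim n^{-(r-1)}$. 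I would then run a downward induction: subtracting $c_r\binom{k}{r-1}$ leaves a polynomial of degree $r-2$ that is still bounded by $O(1)$ on $[1,n-1]$ (because $|c_r|\binom{n-1}{r-1}=O(1)$), whose leading coefficient is $c_{r-1}/(r-2)!$, giving $|c_{r-1}|\lesssim n^{-(r-2)}$, and so on. This forces $\CW_n$ into the box $\prod_j\{|c_j|\le C n^{-(j-1)}\}$, of volume $\lesssim n^{-\binom r2}$. Taking the product over blocks then yields $|\CW_n|\asymp n^{-p}$. The one point needing care for \emph{all} $n$ is that the divided-difference argument requires $n-1\ge r$; the finitely many smaller values are absorbed into the constants, since each $\CW_n$ is a bounded set of positive finite measure (an intersection of preimages of the unit ball under the invertible maps $A^k$). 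I expect the main obstacle to be exactly this coefficient decay in the upper bound --- extracting the sharp power $n^{-(j-1)}$ for each $c_j$ rather than a trivial $O(1)$ --- which is precisely where the Markov/divided-difference estimate with spread-out nodes, combined with the downward induction, is indispensable.
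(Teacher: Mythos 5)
Your proposal is correct, and its skeleton coincides with the paper's own proof: reduction to real Jordan form (with the change of norm and of variables affecting only the constants), factorization of both $\CW_n$ and the exponent $p=\sum_b\binom{r_b}{2}$ over the Jordan blocks, the explicit box $|c_j|\le\varepsilon n^{-(j-1)}$ for the lower bound, and, for the upper bound, extracting the decay $|c_j|\lesssim n^{-(j-1)}$ solely from the boundedness of the first-coordinate polynomial $P_1(k)=\sum_j\binom{k}{j-1}c_j$ at integer times spread across $[1,n)$. Where you genuinely differ is the mechanism of that extraction. The paper evaluates $P_1$ only at the geometrically spaced times $k=2^mq$, $0\le m\le r$, where $2^rq\approx n$, and runs a double induction on the scaled differences $p_{\ell+1,j}(k)=\frac1k\bigl(p_{\ell,j}(k)-p_{\ell,j}(k/2)\bigr)$, lowering the degree one step at a time and recovering all the bounds $|x_j|\le Cn^{-(j-1)}$ by a final backwards induction. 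You instead invoke the classical divided-difference (Lagrange interpolation) estimate — a degree-$m$ polynomial bounded by $B$ at $m+1$ nodes with mutual spacing $\asymp n/m$ has leading coefficient $O(Bn^{-m})$ — to get $|c_r|\lesssim n^{-(r-1)}$, then subtract $c_r\binom{k}{r-1}$ (still $O(1)$ on $[1,n-1]$ precisely because of the bound just obtained) and descend. These are two implementations of the same principle, namely coefficient decay from values at widely spaced nodes, and you correctly identify the crux that consecutive finite differences only give $O(1)$ and would not suffice; your version is more modular and leans on a standard interpolation fact, while the paper's is self-contained and avoids interpolation formulas at the cost of a more intricate induction. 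Your treatment of the edge cases (finitely many small $n$ absorbed into the constants since each $\CW_n$ is bounded of positive measure, and the index $k$ starting at $1$ rather than $0$) is also sound.
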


\begin{notation}
Let $J_r$ denote the $r\times r$ upper triangular elementary Jordan matrix whose entries are given by 
$$
J_{r,i,j}=\begin{cases}
1 & \text{for } 1\leq i\leq r\text{ and }j=i ;\\
1& \text{for } 1\leq i\leq r-1\text{ and }j=i+1;\\
0 &\text{otherwise.}
\end{cases}
$$
In other words, the matrix $J_r$ has $1$'s on the diagonal and on the superdiagonal, and $0$'s elsewhere.
\end{notation}

We begin with a lemma: 
\begin{lemma}
There exists a constant $C=C(r)$ such that 
\begin{multline}
\label{eq:xsmall}
\text{ if }|x_j|\leq \frac 1{n^{j-1}}\text{ for some $n\geq 2$ and }1\leq j\leq r,\\
\text{ then }
\Bigl|\sum_{j=1}^r(J_r^k)_{i,j}x_j\Bigr|\leq C\text{ for }1\leq i\leq r\
\text{ and }0\leq k< n.
\end{multline}
On the other hand, there exists a constant $C'=C'(r)>0$ such that
\begin{multline}
\label{eq:Jxsmall}
\text{if }\Bigl|\sum_{j=1}^r(J_r^k)_{i,j}x_j\Bigr|\leq 1
\text{ for }1\leq i\leq r\text{ and }0\leq k< n\text{ for some }n\geq 2,\\
\text{ then }|x_j|\leq \frac{C'}{n^{j-1}}\text{ for }1\leq j\leq r.
\end{multline}
\end{lemma}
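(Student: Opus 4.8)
The plan is to use the explicit form of the powers of a Jordan block. Since $J_r=\id+N$ with $N$ the nilpotent shift, the binomial theorem gives $(J_r^k)_{i,j}=\binom{k}{j-i}$ for $j\geq i$ and $0$ otherwise, so that for each row $i$,
$$
\sum_{j=1}^r(J_r^k)_{i,j}\,x_j=\sum_{m=0}^{r-i}\binom{k}{m}x_{i+m}=:S_i(k),
$$
a polynomial in $k$ of degree $\leq r-i$. For the first assertion \eqref{eq:xsmall} I would simply estimate termwise: using $\binom{k}{m}\leq k^m/m!<n^m/m!$ for $0\leq k<n$ together with $|x_{i+m}|\leq n^{-(i+m-1)}$, each summand is at most $n^{1-i}/m!\leq 1/m!$ because $i\geq 1$, and summing over $0\leq m\leq r-i$ gives $|S_i(k)|<\e$. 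This direction is routine, and one may even take the absolute constant $C=\e$.

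The substance is the second assertion \eqref{eq:Jxsmall}. The key observation is that it suffices to use the single row $i=1$: the hypothesis says that the degree-$(r-1)$ polynomial $S_1$ satisfies $|S_1(k)|\leq 1$ on the $n$ consecutive integers $0\leq k<n$, and from this I must deduce smallness of its coefficients $x_{1+m}$. I would first dispose of the range $n<r$ trivially, since there $|x_j|=|S_j(0)|\leq 1$ while $n^{-(j-1)}$ is bounded below by a constant depending only on $r$, so the desired bound holds after enlarging $C'$. For $n\geq r$ the plan is to bound the ordinary (monomial) coefficients $a_m$ of $S_1$ by $|a_m|\leq C(r)\,n^{-m}$, and then to recover the $x_j$ from a triangular linear system.

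To bound the $a_m$, I would write $S_1$ by Lagrange interpolation through $r$ nodes $t_0<\dots<t_{r-1}$ chosen inside $\{0,\dots,n-1\}$ and spread out with consecutive gaps comparable to $n/r$, for instance $t_l=\lfloor l(n-1)/(r-1)\rfloor$. The coefficient of $k^m$ in the $l$-th Lagrange basis polynomial is a ratio whose numerator (an elementary symmetric function of the $t_{l'}$) is $O(n^{r-1-m})$ and whose denominator $\prod_{l'\neq l}(t_l-t_{l'})$ has size $\gtrsim (n/r)^{r-1}$; hence this coefficient is $O(n^{-m})$, and summing over the $r$ nodes with $|S_1(t_l)|\leq 1$ yields $|a_m|\leq C(r)\,n^{-m}$. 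This is the heart of the matter and the only place where the exponent is genuinely created: the nodes must be spread across the whole window of length $n$, since consecutive nodes (equivalently, step-one finite differences $\Delta^m S_1(0)=x_{1+m}$) only give the useless bound $|x_{1+m}|\leq 2^{r}$.

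Finally I would invert the change of basis between $\{\binom{k}{m}\}$ and $\{k^m\}$. Expanding $\binom{k}{l}$ in the monomial basis via Stirling numbers $s(l,m)$ gives $a_m=\sum_{l\geq m}\frac{s(l,m)}{l!}x_{1+l}$, an upper-triangular system with nonzero diagonal entries $1/m!$. Solving it by downward recursion from $m=r-1$ to $m=0$, and using $|a_m|\leq C(r)\,n^{-m}$ together with $n^{-l}\leq n^{-m}$ for $l>m$, produces $|x_{1+m}|\leq C'(r)\,n^{-m}$, that is, $|x_j|\leq C'n^{-(j-1)}$ for every $j$. The main obstacle is exactly the coefficient estimate of the previous paragraph, namely realizing that boundedness of $S_1$ on the discrete set $\{0,\dots,n-1\}$ controls its $m$-th coefficient only at the scale $n^{-m}$ and only when the interpolation is carried out on well-separated nodes; once that estimate is in place, the triangular recursion is routine bookkeeping.
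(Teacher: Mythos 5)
Your proof is correct, and it reaches the inequality~\eqref{eq:Jxsmall} by a genuinely different mechanism than the paper's. Both arguments share the same strategic skeleton: only row $i=1$ is used, together with the values of the degree-$(r-1)$ polynomial $S_1$ at about $r$ sample points spread across the whole window $\{0,\dots,n-1\}$ (and you correctly identify why this spreading is forced: consecutive nodes, i.e.\ step-one finite differences, only give bounds independent of $n$). But the implementations diverge. The paper samples at the dyadic points $k=2^mq$, $0\le m\le r$, with $q\approx n/2^r$, and runs a bespoke elimination: at each stage it forms $p_{\ell+1,j}(k)=\frac1k\bigl(p_{\ell,j}(k)-p_{\ell,j}(k/2)\bigr)$, which cancels the lowest-order unknown and gains a factor $1/k$, producing a triangular family of estimates expressed directly in the binomial-coefficient basis, solved at the end by backwards induction. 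You instead sample at arithmetic nodes $t_l\approx ln/r$, invoke Lagrange interpolation to bound the monomial coefficients $|a_m|\le C(r)\,n^{-m}$, and then undo the binomial-to-monomial change of basis via the Stirling-number triangular system. Your route is more modular, since its heart is the clean, reusable fact that a polynomial of degree $<r$ bounded by $1$ at $r$ well-separated nodes of $[0,n)$ has $m$-th coefficient $O_r(n^{-m})$; the paper's is entirely self-contained, never leaving the binomial basis and never needing interpolation formulas or Stirling numbers. One small repair to yours: the node-separation lower bound $\prod_{l'\ne l}|t_l-t_{l'}|\gtrsim (n/r)^{r-1}$ needs the gap $(n-1)/(r-1)$ to be at least $2$, i.e.\ $n\ge 2r-1$ rather than $n\ge r$; this is harmless, because your trivial small-$n$ argument (from $k=0$, $|x_j|=|S_j(0)|\le 1$) covers every $n$ below any fixed multiple of $r$ at the cost of enlarging $C'(r)$.
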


\begin{proof}
For $k\geq 1$,
\begin{equation}
\label{eq:Jkij}
(J_r^k)_{i,j}=\binom k{j-i},
\end{equation}
where we make use of the convention that $\binom kp=0$ if $p<0$ or $p>k$.

To prove the first statement, assume  that $x_1,\dots,x_r$  satisfy the hypothesis of~\eqref{eq:xsmall}.
Then for $1\leq i\leq r$, 
$$
\Bigl|\sum_{j=1}^r(J_r^k)_{i,j}x_j\Bigr|\leq 
\sum_{j=i}^r \binom k{j-i}k^{-j+1}\leq C_1(r)\sum_{j=i}^rk^{j-i}k^{-j+1}\leq C_2(r), 
$$
completing the proof of~\eqref{eq:xsmall}.

The proof of~\eqref{eq:Jxsmall} requires more work. 
Assume that $x_1,\dots,x_r$ satisfy the hypothesis of~\eqref{eq:Jxsmall}. 

Taking $k=0$, we have that 
\begin{equation}
\label{eq:xj}
|x_j|\leq 1\text{ for }1\leq j\leq r.
\end{equation}

Thus, without loss of generality, we can restrict ourselves to the case that $n$ is sufficiently large, 
and assume that $n\geq 2^r+1$. Define the integer $q\geq 1$ by
\begin{equation}
\label{eq:nrq}
2^rq\leq n-1 <2^r(q+1).
\end{equation}
In the sequel, we only make use of hypothesis~\eqref{eq:Jxsmall} with $i=1$ and $k=2^mq$ with $0\leq m \leq r$. Formula~\eqref{eq:Jkij} for the coefficients of the matrix $J_r^k$ gives
$$
\Bigl|\sum_{j=1}^r\binom{2^mq}{j-1}x_j\Bigr|\leq 1 \text{ for } 0\leq m\leq r.
$$
 Since $\binom{2^mq}0=1$ and $|x_1|\leq 1$,  by~\eqref{eq:xj} we have that
 \begin{equation}
\label{eq:binom}
\Bigl|\sum_{j=2}^r\binom{2^mq}{j-1}x_j\Bigr|\leq 2 \text{ for } 0\leq m\leq r.
\end{equation}
Define
\begin{equation}
\label{eq:p}
p_{2,j}(k)=\frac 1k \binom k{j-1}\text{ for }2\leq j\leq r.  
\end{equation}
Then $p_{2,j}$ is a polynomial of degree exactly $j-2$ in the variable $k$. 
Formula~\eqref{eq:binom} implies that
\begin{equation}
\label{eq:pr}
\bigl|\sum_{j=2}^r p_{2,j}(2^mq)x_j\Bigr|\leq \frac 2{2^mq}\text{ for }0\leq m\leq r.
\end{equation}

We continue by induction and assume that for some $\ell$ with $2\leq \ell<r$, we have
\begin{equation}
\label{eq:prell}
\Bigl|\sum_{j=\ell}^r p_{\ell,j}(2^mq)x_j\Bigr|\leq \frac C{(2^mq)^{\ell-1}}\text{ for }\ell-2\leq m\leq r,
\end{equation}
where $p_{\ell,j}(k)$ is a polynomial of degree  exactly $j-\ell$ for $\ell\leq j\leq r$.

The same formula applied with $m-1$ substituted for $m$ leads to 
\begin{equation}
\label{eq:prell2}
\Bigl|\sum_{j=\ell}^r p_{\ell,j}(2^{m-1}q)x_j\Bigr|\leq \frac {2^{\ell-1}C}{(2^mq)^{\ell-1}}\text{ for }\ell-1\leq m\leq r.
\end{equation}
For $\ell+1\leq j\leq r$, define
$$
p_{\ell+1,j}(k)=\frac 1k \bigl(p_{\ell,j}(k)-p_{\ell,j}(k/2)\bigr).
$$
Then $p_{\ell+1,j}(k)$ is a polynomial of degree $\leq j-\ell-1$ in the variable $k$. 
 In fact, this polynomial has degree 
exactly  $j-\ell-1$, as the coefficients of maximal degree
of $p_{\ell,j}(k)$ and $p_{\ell,j}(k/2)$ are not the same.

Taking the difference between the formulas~\eqref{eq:prell} and~\eqref{eq:prell2}, the constant term $p_{\ell,\ell}(2^mq)-p_{\ell,\ell}(2^{m-1}q)$ vanishes and we have that 
\begin{equation}
\label{eq:prellplus1}
\bigl|\sum_{j=\ell+1}^r p_{\ell+1,j}(2^mq)x_j\Bigr|\leq \frac C{(2^mq)^{\ell}}\text{ for }\ell-1\leq m\leq r.
\end{equation}

By induction, Inequality~\eqref{eq:prell2} is proven for $2\leq\ell<r$.  
The polynomial $p_{\ell,\ell}$ is a  nonzero constant 
and we have that 
\begin{equation}
\label{eq:xr}
|x_r|\leq\frac C{(2^rq)^{r-1}}.
\end{equation}

By backwards induction, we now show that 
\begin{equation}
\label{eq:xell}
|x_\ell|\leq \frac C{(2^rq)^{\ell-1}} \text{ for }1\leq \ell\leq r.
\end{equation}
For $\ell=r$, this is exactly~\eqref{eq:xr}. Assume that $1\leq\ell < r$ and that this
bound holds for $\ell+1,\ell+2,\dots,r$. By~\eqref{eq:prell} applied with $m=r$, 
$$
|x_\ell|\leq \frac C{(2^rq)^{\ell-1}}+
\sum_{j=\ell+1}^r |p_{\ell,j}(2^rq)|\frac C{(2^rq)^{j-1}}.
$$
Since $p_{\ell,j}$ is a polynomial of degree $j-\ell$, we have $|p_{\ell,j}(k)|\leq Ck^{j-\ell}$ for some $C>0$ and~\eqref{eq:xell} follows.

We conclude the proof by using~\eqref{eq:nrq} to conclude that $2^rq> (n-1)q/(q+1)\geq n/4$.
\end{proof}

\begin{corollary}
\label{cor:measure}
Let $\R^r$ be endowed with the supremum norm $\norm\cdot_\infty$ and $|K|$ denote the Lebesgue measure of a Borel subset $K$ of $\R^r$. Then for every $n\geq 2$, we have
$$
Cn^{-r(r-1)/2}\leq 
\Bigl|\bigl\{ x\in\R^r\colon \norm{J_r^kx}_\infty\leq 1\text{ for }0\leq k<n\bigr\}\Bigr|
\leq C' n^{-r(r-1)/2}.
$$
\end{corollary}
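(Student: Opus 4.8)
The plan is to read off the corollary directly from the two-sided estimate in the lemma just proved, by sandwiching the set in question between two coordinate boxes of comparable volume. First I would rewrite the defining condition: since
$$
\norm{J_r^k x}_\infty=\max_{1\leq i\leq r}\Bigl|\sum_{j=1}^r (J_r^k)_{i,j}\,x_j\Bigr|,
$$
the set
$$
\Omega:=\bigl\{ x\in\R^r\colon \norm{J_r^k x}_\infty\leq 1\text{ for }0\leq k<n\bigr\}
$$
is exactly the set of $x$ for which $\bigl|\sum_{j=1}^r (J_r^k)_{i,j}x_j\bigr|\leq 1$ for all $1\leq i\leq r$ and $0\leq k<n$. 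This is precisely the hypothesis of~\eqref{eq:Jxsmall} and the conclusion of~\eqref{eq:xsmall}, so the entire content of the corollary is already contained in the lemma, and only a volume computation remains.

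For the upper bound I would invoke~\eqref{eq:Jxsmall}: every $x\in\Omega$ satisfies $|x_j|\leq C'/n^{j-1}$ for $1\leq j\leq r$, so $\Omega$ is contained in the box $\prod_{j=1}^r[-C'/n^{j-1},\,C'/n^{j-1}]$, whose Lebesgue measure is
$$
\prod_{j=1}^r\frac{2C'}{n^{j-1}}=(2C')^r\,n^{-\sum_{j=1}^r(j-1)}=(2C')^r\,n^{-r(r-1)/2},
$$
using $\sum_{j=1}^r(j-1)=r(r-1)/2$; this is the desired upper bound. For the lower bound I would apply~\eqref{eq:xsmall} after a rescaling to absorb the constant $C=C(r)$ appearing there. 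If $|x_j|\leq 1/(C\,n^{j-1})$ for every $j$, then $y:=Cx$ satisfies $|y_j|\leq 1/n^{j-1}$, so~\eqref{eq:xsmall} gives $\norm{J_r^k y}_\infty\leq C$, that is $\norm{J_r^k x}_\infty\leq 1$ for $0\leq k<n$; hence the box $\prod_{j=1}^r[-1/(Cn^{j-1}),\,1/(Cn^{j-1})]$ is contained in $\Omega$. Computing its measure exactly as above yields $|\Omega|\geq (2/C)^r\, n^{-r(r-1)/2}$, the desired lower bound.

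I do not expect a genuine obstacle, since all the analytic work was carried out in the lemma and the corollary is a bookkeeping step. The only points needing care are the rescaling in the lower bound, which converts the constant $C$ of~\eqref{eq:xsmall} into the normalization $\norm{\cdot}_\infty\leq 1$, and the elementary identity $\sum_{j=1}^r(j-1)=r(r-1)/2$ producing the exponent. One should also note that the constants named $C,C'$ in the corollary statement are not those of the lemma but rather $(2/C)^r$ and $(2C')^r$ (with $C,C'$ now denoting the lemma's constants), so a harmless renaming is involved.
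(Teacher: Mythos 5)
Your proof is correct and coincides with the paper's intended argument: the paper states Corollary~\ref{cor:measure} without any written proof precisely because it follows from the preceding lemma by the box-sandwiching you describe, with~\eqref{eq:Jxsmall} giving containment in a box of volume $(2C')^r n^{-r(r-1)/2}$ and~\eqref{eq:xsmall} (after your rescaling by $C$, which is legitimate by linearity of $J_r^k$) giving containment of a box of volume $(2/C)^r n^{-r(r-1)/2}$. The bookkeeping, including $\sum_{j=1}^r (j-1)=r(r-1)/2$ and the renaming of constants, is exactly as you have it.
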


Using this, we complete the proof of Proposition~\ref{prop:lineaire}.
Since the matrix $A$ is unipotent,
there  exists a $d\times d$ invertible matrix $\Phi$ such that the matrix
$B:=\Phi A\Phi^{-1}$ is in Jordan form. Thus all the entries of $B$
are all equal to $0$ other than $m\geq 1$ diagonal square blocks, each of which is an elementary Jordan $r_j\times r_j$ matrix $J_{r_j}$ for $1\leq j\leq m$.
%That is:
%$$
%B=\begin{pmatrix}
%\Bigl(J_{r_1}\Bigr) & \Bigl(0\Bigr) & \hdotsfor{2}& \Bigl(0\Bigr)
%\\[3mm]
%\Bigl(0\Bigr) & \Bigl(J_{r_2}\Bigr)& \Bigl(0\Bigr) &\ldots & \Bigl(0\Bigr)
%\\[3mm]
%\vdots & \vdots & \ddots  & \ddots & \vdots \\[3mm]
%\Bigl(0\Bigr) & \hdotsfor{3} & \Bigl(J_{r_m}\Bigr)
%\end{pmatrix}
%$$

We have
$$
|\CW_n|=|\det(\Phi)|^{-1}\,|\Phi(\CW_n)|
$$
for every $n$. Moreover,
$$
\Phi(W_n)=\bigl\{ \eta\in\R^d\colon \norm{\Phi B^k\eta}\leq 1\text{ for }0\leq k<n\bigr\}
$$
and thus there  exist positive constants $c$ and $c'$ with 
\begin{multline*}
c\bigl\{ \eta\in\R^d\colon \norm{ B^k\eta}_\infty\leq 1\text{ for }0\leq k<n\bigr\}
\subseteq \Phi(\CW_n)
\\
\subseteq
c'\bigl\{ \eta\in\R^d\colon \norm{ B^k\eta}_\infty\leq 1\text{ for }0\leq k<n\bigr\}.
\end{multline*}
Therefore
\begin{multline*}
c^d\Bigl|\bigl\{ \eta\in\R^d\colon \norm{ B^k\eta}_\infty\leq 1\text{ for }0\leq k<n\bigr\}
\Bigr|
\leq \bigl|\Phi(\CW_n)\bigr|
\\
\leq
c'^d\Bigl|
\bigl\{ \eta\in\R^d\colon \norm{ B^k\eta}_\infty\leq 1\text{ for }0\leq k<n\bigr\}
\Bigr|.
\end{multline*}
On the other hand, 
\begin{multline*}
\bigl\{ \eta\in\R^d\colon \norm{ B^k\eta}_\infty\leq 1\text{ for }0\leq k<n\bigr\}\\
=\prod_{j=1}^m\bigl\{\eta\in\R^{r_j}\colon \norm{J_{r_j}^k\eta}_\infty\leq 1\text{ for } 0\leq k<n\bigr\}.
\end{multline*}
Combining these remarks with Corollary~\ref{cor:measure}, we have that
$$
Cn^{-p}\leq |\CW_n|\leq C' n^{-p}, 
$$
where 

\begin{multline*}
p=
\sum_{j=1}^m \frac{r_j(r_j-1)}2 =
\sum_{j=1}^m\sum_{k=1}^{r_j-1}k\\
\quad\begin{aligned}
\quad
&=\sum_{j=1}^m \sum_{k=1}^{d-1} k\bigl(\dim(\ker(\id-J_{r_j})^{k+1})
-\dim(\ker(\id-J_{r_j})^{k})\bigr)\\
 &=\sum_{k=1}^{d-1}k\bigl(\dim(\ker(\id-A)^{k+1})-\dim(\ker(\id-A)^k)\bigr)\\
&=\sum_{k=1}^{d-1}\dim(\range(\id-A^k)).\hskip5cm\qed
\end{aligned}
\end{multline*}

\end{document}